\documentclass[11pt,oneside,a4paper]{amsart}\openup1.6\jot
\usepackage{amsthm}
\usepackage{amsmath}
\usepackage{amsrefs} 
\usepackage{amssymb}
\usepackage{amsfonts}
\usepackage{amscd}
\usepackage{mathrsfs} 
\usepackage{mathtools}
\usepackage[dvipdfmx]{graphicx}
\usepackage{bm}   
\usepackage[all]{xy}
\usepackage{ulem}
\usepackage[all]{xy}
\usepackage{ulem}
\newtheorem{theorem}{Theorem}[section]
\newtheorem{lemma}[theorem]{Lemma}
\newtheorem{proposition}[theorem]{Proposition}
\newtheorem{corollary}[theorem]{Corollary}

\newtheorem{assumption}[theorem]{Assumption}

\theoremstyle{definition}
\newtheorem{definition}[theorem]{Definition}

\newtheorem{remark}[theorem]{Remark} 

\def\C{\mathbb{C}} 
\def\R{\mathbb{R}}

\def\P{\mathbb{P}}
\def\Z{\mathbb{Z}}

\def\i{{\mathrm{i}}}

\def\id{\mathrm{id}}

\newcommand{\End}{\mathrm{End}}
\newcommand{\Hom}{\mathrm{Hom}}

\newcommand{\homscr}{{\mathscr{H}\! \! om}}
\def\Cok{{\mathrm{Coker}}}

\def\({\left(}
\def\){\right)}
\def\<{\langle}
\def\>{\rangle}
\newcommand{\simeqto}{\xrightarrow{\sim}}
\newcommand{\jump}[1]{\ensuremath{[\![#1]\!]} }
\newcommand{\pole}[1]{\ensuremath{(\!(#1)\!)} }
\newcommand{\conv}[1]{\ensuremath{(\!\{#1\}\!)}}
\newcommand{\cal}[1]{\mathcal{#1}}
\renewcommand{\scr}[1]{\mathscr{#1}}

\def\Ker{{\mathrm{Ker}}}

\def\O{{\mathscr{O}}}
\def\per{{\mathrm{per}}}

\def\asy{{\mathrm{asy}}}
\def\gr{{\mathrm{gr}}}

\def\RH{{\mathrm{RH}}}
\def\DR{{\mathrm{DR}}}
\def\Sol{{\mathrm{Sol}}}

\def\aut{{\scr{A}ut}}

\def\dR{{\mathsf{dR}}}
\def\Be{{\mathsf{Be}}}
\def\rh{{\mathrm{rh}}}
\def\gl{{\mathrm{global}}}
\def\loc{{\mathrm{local}}}
\def\Cpt{{\mathrm{Cpt}}}
\def\Exp{{\mathrm{Exp}}}
\def\cpx{{\mathsf{cpx}}}
\def\CS{{\mathrm{CS}}}

\title[A RH correspondence for cohomology theories of closed 1-forms
]{A Riemann--Hilbert correspondence for cohomology theories of closed 1-forms
}
\author{Yota Shamoto}
\address{Faculty of Science and Engineering, Yamato University, Suita, Osaka 564-0082, Japan.
} 
\email{shamoto.yota@yamato-u.ac.jp}

\begin{document}
\begin{abstract}
Motivated by the work of Kontsevich--Soibelman on the comparison of isomorphisms conjecture for closed algebraic $1$-forms, we establish a Riemann--Hilbert correspondence of Deligne--Malgrange type.

As an application, we prove a variant of the comparison of isomorphisms theorem for a simple class of algebraic $1$-forms on complex curves.
\end{abstract}
\maketitle

\section{Introduction}
In this paper, we reinterpret a conjecture of Kontsevich--Soibelman \cite{kontsevich2024holomorphic}*{Conjecture 4.7.1}, which is known as the comparison of isomorphisms conjecture, from the perspective of the Deligne--Malgrange Riemann--Hilbert correspondence \cites{Deligne,Malgrange}.
We begin in \S\ref{Intro CIC} by recalling the part of \cite{kontsevich2024holomorphic} concerning the comparison of isomorphisms conjecture. 
After the preparatory discussions in \S\ref{Intro Periods}, \S \ref{Main idea}, and \S \ref{Intro Coeff}, we formulate our Riemann--Hilbert correspondence in \S\ref{Intro RH}. Finally, in \S\ref{Intro 1dim}, we describe our formulation of a one-dimensional version of the comparison of isomorphisms theorem.

\subsection{Comparison of isomorphisms conjecture}
\label{Intro CIC}

Let $X$ be a complex projective manifold, and let $\alpha$ be a meromorphic closed 1-form on $X$ whose polar divisor is supported on a hypersurface $D \subset X$.

In their study of holomorphic Floer theory and its relation to exponential integrals, Kontsevich--Soibelman associate with a pair $(X,\alpha)$ (in a more general setting) four $z$-dependent cohomology theories:
\begin{align*}
H_{\dR,\gl}^{*}(X,\alpha),&&
H_{\dR,\loc}^{*}(X,\alpha),&& 
H_{\Be,\gl}^{*}(X,\alpha),&&
H^*_{\Be,\loc}(X,\alpha)
\end{align*}
called the global and local de Rham cohomology, and the global and local Betti cohomology, respectively.

The role of the parameter $z$ varies among these theories: 
it is an algebraic parameter in the global de Rham theory, 
a formal variable in the local de Rham theory, 
a complex analytic parameter in the global Betti theory,
and a parameter admitting a meromorphic extension in the local Betti theory. 
We briefly recall their definitions.

The global de Rham cohomology $H_{\dR,\gl}^{*}(X,\alpha)$ 
is defined as the hypercohomology of the complex
\begin{align}\label{Intro dR global}
    (\Omega_X^*(*D)[z],\, zd+\alpha\wedge),
\end{align}
where $z$ is viewed as an algebraic parameter.

The local de Rham cohomology $H_{\dR,\loc}^{*}(X,\alpha)$ 
is defined as the hypercohomology of the formally completed complex
\begin{align}\label{Intro localdR}
    (\Omega_X^*(*D)\jump{z},\, zd+\alpha\wedge),
\end{align}
where $z$ is regarded as a formal variable.

For each $z\in\C^*=\C\setminus\{0\}$, let
\[
\nabla^z = d + z^{-1}\alpha.
\]
The fiber $H_{\Be,\gl}^{*}(X,\alpha)_z$ 
of the global Betti cohomology 
is defined as the rapid decay cohomology of $\nabla^z$.
As $z$ varies in $\C^*$, these groups form a coherent sheaf on $\C^*$,
which is denoted by 
$H_{\Be,\gl}^{*}(X,\alpha)$.

The local Betti cohomology $H_{\Be,\loc}^{*}(X,\alpha)$ 
is defined analytically in $z$ as the direct sum of relative homologies: 
Let $Z(\alpha)=\bigcup_{j}Z_j$
be irreducible decomposition. We set  
\begin{align}\label{Intro Betti local}
    \mathcal{H}_{\Be,\loc,z}^{\bullet}(X,\alpha)\coloneqq 
\bigoplus_{j}H^\bullet (U_j,U_j\cap f_j^{-1}(\varepsilon \cdot e^{\i\theta});\Z),
\end{align}
where $U_j$ is an sufficiently small
neighborhood of
the component $Z_j$,
$f_j\colon U_j\to \C$
is a holomorphic function satisfying 
$df_j=\alpha$ and $f_{j|Z_j}=0$,
$\varepsilon>0$ is sufficiently small 
positive real number,
and $\theta=\arg(z)$.
We have a local system
$\mathcal{H}_{\Be,\loc}^{\bullet}(X,\alpha)$ of $\Z$-modules
on $\C^*$.
We set \[H^*_{\Be,\loc}(X,\alpha)=(j_*\mathcal{H}_{\Be,\loc}(X,\alpha)\otimes_{j_*\Z_{\C^*}} 
\mathscr{O}_\C(*0))_0,\]
where $j\colon \C^* \to \C$ denotes the inclusion, and 
$\mathscr{O}_\C(*0)$
denote the sheaf of meromorphic
functions on $\C$ whose poles are contained
in $\{0\}$.
The subscript $0$ on the right hand side
denotes 
the stalk at $0$.
It is defined over 
the field ${\C\conv{z}}$
of convergent Laurent series in $z$.

It is also observed that the global 
Betti cohomology has the extension 
to the module
$H^*_{\Be,\gl}(X,\alpha)^{\rm mero}$
over $\C\conv{z}$, 
constructed by the theory of wall-crossing 
structure. The module 
$H^*_{\Be,\gl}(X,\alpha)^{\rm mero}$ is 
called the meromorphic extension. 

On the one hand, 
we have isomorphisms 
\begin{align*}
    &\RH_\gl\colon 
    H^*_{\dR,\gl}(X,\alpha)\otimes_{\C[z]}
    \mathscr{O}_{\C^*}
    \longrightarrow 
    H^*_{\Be,\gl}(X,\alpha),\text{ and }\\
    &\RH_\loc\colon H^*_{\dR,\loc}(X,\alpha)\otimes_{\C\jump{z}}\C\pole{z}\longrightarrow 
    H_{\Be,\loc}(X,\alpha)\otimes_{{\C\conv{z}}}\C\pole{z}. 
\end{align*}
Here, $\scr{O}_{\C^*}$ denotes 
the sheaf of 
holomorphic functions on $\C^*$. 
On the other hand, there are 
so called local-to-global
isomorphisms
\begin{align*}
&{\phi}_{\dR}\colon H^*_{\dR,\gl}(X,\alpha)\otimes\C\pole{z}
\longrightarrow H^*_{\dR,\loc}(X,\alpha)\otimes_{\C\jump{z}}\C\pole{z},\text{ and }\\
&\phi_\Be \colon H^*_{\Be,\gl}(X,\alpha)^{\rm mero}\otimes_{{\C\conv{z}}}\C\pole{z}
\longrightarrow H^*_{\Be,\loc}(X,\alpha)
\otimes_{{\C\conv{z}}} \C\pole{z}.
\end{align*}
Here, $\phi_\dR$ is given by the degeneration of spectral sequence
for formal completion.  
The isomorphism $\phi_\Be$ 
is defined by  
the theory of analytic 
wall-crossing structure.  

Then, they conjectured 
\cite{kontsevich2024holomorphic}*{Conjecture 4.7.1}
that 
\begin{itemize}
\item $\RH_{\gl}$ has meromorphic extension
\[\RH_{\gl}^{\rm mero}\colon 
H^*_{\dR,\gl}(X,\alpha)\otimes_{\C[z]}{\C\conv{z}}
\to 
H^*_{\Be,\gl}(X,\alpha)^{\rm mero}\]
to $z=0$, and that 
\item the diagram
\begin{align}\begin{split}\label{KS}
    \xymatrix@C=5em{
    H^*_{\dR,\gl}(X,\alpha)\otimes_{\C[z]} \C\pole{z}
    \ar[r]^{{\RH}_\gl^{\rm formal}}\ar[d]_{\phi_\dR}
    &H^*_{\Be,\gl}(X,\alpha)^{\rm mero}\otimes \C\pole{z}
    \ar[d]^{\phi_\Be}\\
    H^*_{\dR,\loc}(X,\alpha)\otimes_{\C\jump{z}}\C\pole{z}
    \ar[r]^{\RH_\loc}
    &H^*_{\Be,\loc}(X,\alpha)\otimes_{{\C\conv{z}}} \C\pole{z}
    }\end{split}
\end{align}
commutes, where ${\RH}_\gl^{\rm formal}$
denotes the formal completion
of ${\RH}_\gl^{\rm mero}$.
\end{itemize}
We refer to this conjecture as 
\textit{the comparison of 
isomorphisms conjecture}.

In this paper, we do not address this conjecture directly. 
Instead, we notice the following fact: 
After they proposed the conjecture,
they indicated the relation to the Riemann--Hilbert correspondence of Deligne--Malgrange type
\cite{kontsevich2024holomorphic}*{Remark 4.7.2}:
\begin{quote}
        The Conjecture $4.7.1$ can be thought of as a version of the Deligne--Malgrange Riemann--Hilbert correspondence in the case of irregular $D$-modules associated with closed $1$-forms. 
\end{quote}
The purpose of this paper is to provide a formulation that makes this remark precise.

\subsection{Background 
of the remark}\label{Intro Periods}
If the $1$-form $\alpha$ is exact, 
that is, if there exists a 
meromorphic function $f$ on $X$
with poles in $D$
such that $\alpha=df$,
then one obtains a 
globally defined connection 
in the $z$-direction:
$\nabla=d+d(f/z)$.
It follows that the global de Rham cohomology
is naturally equipped with 
a connection 
in the $z$-direction.
Let $(E,\nabla)$
denote the associated
germ of a meromorphic connection 
over $\C\conv{z}$. 
One can apply the 
Deligne--Malgrange 
Riemann--Hilbert functor \cite{Deligne}
to $(E,\nabla)$
to obtain 
a Stokes-filtered local system
on the circle. 
The underlying local system 
can be identified with the
global Betti cohomology. 

When $\alpha$ is not exact, however, 
there is in general no globally defined connection in the $z$-direction.
Kontsevich--Soibelman \cite{kontsevich2024holomorphic}
overcome this difficulty 
as follows. 
In \cite{kontsevich2024holomorphic}*{Section 2},
they reinterpret the Stokes structure of 
exponential integrals associated with $f$, 
i.e., the Stokes-filtered local system 
associated with $(E,\nabla)$ 
in terms of wall-crossing structure.
They generalize 
the wall-crossing structure to 
the non-exact case in \cite{kontsevich2024holomorphic}*{Section 3}. 
This viewpoint appears to underlie their remark
\cite{kontsevich2024holomorphic}*{Remark 4.7.1}. 
Their approach is ambitious, encompassing infinite-dimensional situations; however, its relation to the classical Riemann--Hilbert correspondence becomes less transparent.

\subsection{Main idea of our approach}\label{Main idea}
In our approach,
we propose a more direct generalization of
the notion of 
Stokes-filtered local systems
by replacing the coefficient field
$\C$ by a filtered sheaf of rings
on the circle.
Correspondingly,
we also generalize the de Rham side, i.e., 
the notion of germs of meromorphic connections in the $z$-variable. 
Before going into the details, 
we first explain the main idea 
using the simplest non-trivial example 
related to the Gamma function,
which illustrates the key phenomena.
This example is also studied in  
\cite{kontsevich2024holomorphic}*{Section 4}.

Let $X=\mathbb{P}^1$
be a projective line with 
affine coordinate $x$.
Set $D=\{0,\infty\}$.
Let us consider the meromorphic closed one form $\alpha=-(1-x)x^{-1}dx$
on $X$ with poles in $D$. 
Although the $1$-form
$\alpha$
is not exact, there exists 
a multivalued function 
$f=x-\log x$
with $df=\alpha$. 
The difference between two branches of 
$\log x$ is given by the period
$\int_{|x|=1}\alpha=-2\pi \i$.

On the de Rham side, 
the isomorphism $\phi_{\dR}$ can be described explicitly   
in terms of the Borel summable formal power series
\[\exp\left(-\sum_{n=1}^\infty \frac{B_{2n}}{2n(2n-1)}z^{2n-1}\right),\]
where $B_{2n}$ denotes the Bernoulli numbers. 
By the Borel-Laplace 
transform, 
one obtains two connections 
\begin{align*}
    \nabla&=d+(\psi(1/z)-\log(1/z)-1)\frac{dz}{z^2},\\
    \nabla'&= d+(\psi(1-z^{-1})-\log(1/z)
    -\pi \i-1)\frac{dz}{z^2},
\end{align*}
where $\psi(s)$ denotes the digamma function. 
The connections $\nabla$ and $\nabla'$  are analytic lifts 
of the formal connection 
  $  \widehat{\nabla}=d-2^{-1}\left( 1+\sum_{n=1}^\infty {B_{2n}}{n}^{-1}z^{2n-1}\right)z^{-1}{dz}-z^{-2}{dz}$ over the 
sectors $\{-\pi<\arg(1/z)< \pi\}$
and $\{0<\arg(1/z)<2\pi\}$,
respectively.
If we set $u=\exp(-2\pi \i /z)$ and $g=1-u$,
we have $g\nabla=\nabla'g$
on the sector $\{0<\arg(1/z)<\pi\}$.
A similar relation holds on 
the sector $\{-\pi<\arg(1/z)<0\}$.

On the Betti side, 
although 
$x-\log x$
is multivalued, 
\[\C[u,u^{-1}]x^{1/z}\exp(-x/z),\quad  
u=\exp(-2\pi\i/z)\]
is a well-defined subsheaf 
of the sheaf $\scr{O}_{\C^*\times \C^*}$,
which defines a rank-one 
local system of 
$\C[u,u^{-1}]$-modules.
The variable $u$ 
encodes the monodromy 
arising from the multivaluedness of $ x^{1/z}$.
Integrating the $1$-form $x^{-1}dx$ along the rapid decay homology classes 
of this local system,
one obtains functions
$z^{1/z}\Gamma(1/z)$ on the sector $\{-\pi<\arg(1/z)< \pi\}$
and 
$z^{1/z}\Gamma(1-z^{-1})^{-1}$
on the sector $\{0<\arg(1/z)<2\pi\}$,
respectively. 
They are also connected by $g$
on $\{0<\arg(1/z)<\pi\}$,
which is known as
the reflection formula for the 
Gamma function. 
A similar relation holds on 
the sector $\{-\pi<\arg(1/z)<0\}$.
See \S \ref{Example G} for details
of our approach to this object.

This example suggests the following:
\begin{itemize}
    \item On the de Rham side,
    germs of meromorphic connections should be
    generalized to connections
    on sectors, which have the same asymptotic 
    expansions. These should be related to each other by functions like 
    $u=\exp(-2\pi \i/z)$. 
    \item On the Betti side,
    Stokes-filtered local system 
    should be generalized 
    to modules over a ring
    which contains a generalization of $u=\exp(-2\pi\i/z)$.
\end{itemize}
On both sides, the function 
$u$ 
plays a central role. It is induced from the period $\int_{|x|=1}\alpha$,
which encodes the multivaluedness 
of the primitive $x-\log x$ of $\alpha$. 

In general, returning to the setting in \S \ref{Intro CIC},
the multivaluedness of 
a primitive of the meromorphic closed $1$-form 
$\alpha$ on the complex projective manifold $X$
whose poles are supported on $D$ is encoded by 
 the period map
 \begin{align*}
     \widetilde{\mu}_\alpha\colon 
     H_1(Y,\Z)\to\C, \gamma\mapsto \int_\gamma\alpha,
    \quad Y=X\setminus D. 
 \end{align*}
Since only the periods appearing in the Stokes matrices are relevant, we may discard the kernel of $\widetilde{\mu}_\alpha$.
We therefore set
\begin{align}\label{intromu}
    L=H_1(Y,\Z)/\Ker(\widetilde{\mu}_\alpha),
\quad \mu=\mu_\alpha\colon L\to\C
\end{align}
for the induced injective homomorphism.
The function $u$ is 
replaced by functions of the form
\begin{align*}
    \exp(\mu(\gamma)/z) \quad \gamma\in L.
\end{align*} 

This suggests that both sides of 
the correspondence 
should be formulated over a 
common sheaf of rings 
containing these exponential functions.

\subsection{The coefficient sheaf $\scr{A}_\mu$}\label{Intro Coeff}
We now introduce the coefficient sheaf $\scr{A}_\mu$ on a
circle $S^1=\{e^{\i\theta}\mid \theta\in\R\}$, which encodes the exponential growth behavior governed by the period homomorphism $\mu$.
The sheaf $\scr{A}_\mu$
will play a central role in our theory. 

Let $\varpi\colon \widetilde{\C}\to \C$
be the real oriented blow-up of the origin in the complex plane. Let $z$ denote the coordinate function.
We regard $S^1$ as the boundary of $\widetilde{\C}$. In particular, 
$\theta$ denotes the argument of $z$. 
Let $\scr{O}_{\C^*}$ denote the 
sheaf of holomorphic functions on $\C^*=\C\setminus \{0\}$.
Let $\imath\colon S^1\to \widetilde{\C}$
and $\jmath\colon \C^*\to \widetilde{\C}$
denote inclusions, and set
    $\widetilde{\scr{O}}
    =\imath^{-1}\jmath_*
    \scr{O}_{\C^*}. $
Let $\scr{A}^{\leqslant 0}\subset\widetilde{\scr{O}}$
 be the subsheaf consisting of functions of moderate growth as 
$z\to 0$ (see \S\ref{sheaves on S^1} for a precise definition).
Let
$L\simeq \Z^r$ be a lattice and let
$\mu\colon L\to \C$
be a homomorphism.
We then define the sheaf 
$\scr{A}_\mu$
as follows.  
We first define a subsheaf  $\scr{A}_\mu^{\leqslant 0}\subset \scr{A}_\mu$. 
For an open subset 
$I\subset S^1$ a section of $\scr{A}_\mu^{\leqslant 0}$
over $I$ is a convergent 
series of the form 
\begin{align*}
    h(z)=\sum_{\gamma\in L}a_\gamma\exp(\mu(\gamma)/z).
\end{align*}
which defines a holomorphic function on the corresponding sector of $\widetilde{\C}$
and has moderate growth as $z\to 0$ along directions in $I$,
i.e., $h(z)\in \scr{A}^{\leqslant 0}(I)$.
The convergence of the above infinite series is not automatic.
To obtain a workable sufficient condition ensuring convergence and good topological properties of the resulting ring, we impose the support property \eqref{support property} on the homomorphism 
$\mu$.
(See \S\ref{Amu} for the precise formulation.)
Having defined the moderate part, we set
\[\scr{A}_\mu=\sum_{\gamma\in L }\exp(\mu(\gamma)/z)\scr{A}_\mu^{\leqslant 0},\]
which defines a subsheaf of rings in 
$\widetilde{\scr{\O}}$. 

There is a natural filtration 
$\scr{A}_\mu^{\leqslant c} $ 
indexed by $c\in \C$
which measures the exponential growth of sections relative to $\exp(c/z)$ along 
sectors (see \S \ref{Amu} for details). 
The filtered sheaf of rings 
$\scr{A}_\mu$
will serve as the common coefficient structure in our formulation of the Riemann--Hilbert correspondence.
In particular, it enters both the de Rham and the Betti sides of the correspondence.

\subsection{Riemann--Hilbert correspondence of Deligne--Malgrange type}
\label{Intro RH}
Now we formulate a Riemann--Hilbert correspondence adapted to the exponential data encoded by the coefficient sheaf $\scr A_\mu$ introduced in \S\ref{Intro Coeff}. 
This correspondence is governed by a pair $(C,\mu)$ consisting of exponential and period data.

In addition to the lattice homomorphism $\mu:L\to\C$, 
we fix a finite subset $C\subset\C$, 
which indexes the formal exponential factors $e^{c/z}$ 
appearing in the irregular connections under consideration. 

On the de Rham side, 
we consider a category $\dR_{C,\mu}$.
An object of $\dR_{C,\mu}$ consists of a tuple
\[
(E,\mathcal{I},\Xi,(E^0,\nabla^0)),
\]
where:

\begin{itemize}
\item $E$ is a finite dimensional $\C\conv{z}$-vector space;
\item $(E^0,\nabla^0)$ is the stalk at $0$ of a meromorphic connection,
      whose decomposition 
      $E^0=\bigoplus_{c\in C}E^0_c,$
      satisfies the condition that 
      $\nabla^0_{|E^0_c}-d(c/z)$ is regular singular;

\item $(\mathcal{I},\Xi)$ is additional sectorial data,
      encoding analytic lifts and their transition isomorphisms,
      to be described below.
\end{itemize}

The symbol $\mathcal{I}=\{I_k\}$ denotes a sectorial open covering 
indexed by $k\in \Z/K\Z$ for some $K>1$.
Let $\Xi=(\Xi_k)_{k\in \Z/K\Z}$ be a family of isomorphisms
\[
\Xi_k\colon E\otimes \scr A_{|I_k}
\longrightarrow E^0\otimes \scr A_{|I_k},
\]
where $\scr A\subset \widetilde{\scr O}$ denotes 
the sheaf of holomorphic functions on sectors 
admitting asymptotic expansions of Gevrey order one.
The moderate part $\mathscr{A}_\mu^{\leqslant 0}$
of the coefficient sheaf 
$\scr A_\mu$ is a subsheaf of $\scr A$ 
encoding the allowed exponential growth
(The asymptotic expansion is
the constant term). 
The moderate part $\scr A_\mu^{\leqslant 0}$ controls the Stokes automorphisms, while the full sheaf $\scr{A}_\mu$ will appear in the global formulation.
We impose the following conditions on the automorphisms
$\Xi_k\circ \Xi_{k+1}^{-1}$ defined on $I_k\cap I_{k+1}$:
\begin{itemize}
    \item $\Xi_k\circ \Xi_{k+1}^{-1}$ is asymptotic to the identity on $I_k\cap I_{k+1}$;
    \item $\Xi_k\circ \Xi_{k+1}^{-1}$ is defined over the ring $\scr A_\mu^{\leqslant 0}$.
\end{itemize}
These conditions ensure that the gluing data encode 
the Stokes phenomenon and allow us to define 
the de Rham functor
for connections 
\[\nabla^k\coloneqq \Xi_k^{-1}\nabla^0\Xi_k 
\quad (k\in\Lambda_K)\]
defined on sectors $\mathcal{I}=\{I_k\}_{k\in 
\Lambda_K}$. 
See Definition \ref{dRE} for details.
We note that this generalizes the category of stalks of meromorphic connections of unramified exponential type.
See Remark \ref{dRgeneralize}. 

On the Betti side, 
we describe the Stokes data intrinsically 
in terms of filtered modules over the coefficient sheaf.
We consider the category $\Be_{C,\mu}$
whose objects are filtered $\scr A_\mu$-modules
$(\scr L,\scr L_{\leqslant })$
indexed by $\C$.
In particular, 
$\mathscr{L}$
is a sheaf of $\mathscr{A}_\mu$-modules
on $S^1$. 
The filtration reflects the exponential growth order,
and is compatible with the natural filtration 
$\scr A_\mu^{\leqslant  c}$ of the coefficient sheaf.
We require that the associated graded module
\[
\gr \scr L
=
\bigoplus_{c\in \C}
\scr L_{\leqslant  c}/\scr L_{<c}
\]
admits local lifts, 
so that  
$\scr L$ is locally isomorphic to 
$\scr{A}_\mu\otimes \gr\scr{L}$.
See Definition \ref{DefBe} for details.
This notion generalizes the category of 
Stokes-filtered local systems of unramified exponential type 
(cf. \cites{Deligne,Sabbah}),
and also recovers, in a special case,
the Stokes-filtered $\scr A_\per$-modules introduced by the author 
in the study of mild difference modules \cite{shamoto2022stokes}.
See Remark \ref{Begeneral}.

The first main theorem 
of this paper is the following:
\begin{theorem}[Theorem \ref{main theorem}]
    There is an equivalence of categories
    \begin{align*}
    \DR^\mu\colon \dR_{C,\mu}\longrightarrow\Be_{C,\mu}.
    \end{align*}
\end{theorem}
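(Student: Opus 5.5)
The plan is to imitate the classical proof of the Deligne--Malgrange correspondence in the unramified exponential case, replacing the constant coefficients $\C$ by the filtered sheaf of rings $\scr A_\mu$. First I would construct $\DR^\mu$ explicitly. Given $(E,\mathcal I,\Xi,(E^0,\nabla^0))$, the germ $(E^0,\nabla^0)$ decomposes as $\bigoplus_c E^0_c$ with $\nabla^0_{|E^0_c}-d(c/z)$ regular singular. Its horizontal sections on sectors therefore give a local system $\gr\scr L=\bigoplus_{c\in C}\scr L_c$ on $S^1$, with $\scr L_c=e^{c/z}\cdot(\text{regular horizontal sections})$. On each $I_k$, I set
\[
\scr L_{|I_k}\coloneqq \scr A_\mu\otimes_{\C}\Ker\bigl(\nabla^k\colon E\otimes\scr A_{|I_k}\to\cdots\bigr),
\]
which I identify via $\Xi_k$ with $\scr A_\mu\otimes\gr\scr L_{|I_k}$. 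These pieces are glued on $I_k\cap I_{k+1}$ by $\Xi_k\circ\Xi_{k+1}^{-1}$. Since these transition maps are defined over $\scr A_\mu^{\leqslant 0}$ and are asymptotic to the identity, they preserve the filtration $\scr L_{\leqslant c}\coloneqq\sum_{c'}\scr A_\mu^{\leqslant c-c'}\otimes\scr L_{c'}$ and induce the identity on the graded pieces. This yields a filtered $\scr A_\mu$-module with local lifts of $\gr$, that is, an object of $\Be_{C,\mu}$. I would then check independence of the covering by passing to common refinements, and functoriality for morphisms: a morphism in $\dR_{C,\mu}$ is compatible with the $\Xi_k$, so it preserves horizontal sections and the filtrations.

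For full faithfulness, I would reduce to $\homscr$ objects. Both categories should admit internal Hom, and $\DR^\mu$ should commute with it, so it suffices to show that horizontal morphisms $\mathbf 1\to M$ in $\dR_{C,\mu}$ correspond to global sections of $\scr L_{\leqslant 0}$. The key input is a local statement on sectors: a section of $E^0\otimes\scr A$ that is horizontal and of moderate growth lies in the $\scr A_\mu^{\leqslant 0}$-span of horizontal sections with exponents $c$ satisfying $\mathrm{Re}((c-c')/z)\le 0$. Equivalently, I need $\scr A_\mu\cap\scr A^{\leqslant 0}=\scr A_\mu^{\leqslant 0}$ sectorwise and $\gr$-compatibility, which I expect to extract from the structure results on $\scr A_\mu$ in \S\ref{Amu} that use the support property. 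The $\C\conv{z}$-rationality of a global morphism should then follow because the gluing data are asymptotic to the identity, so a compatible family of sectorial morphisms has a common asymptotic expansion which is convergent.

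Essential surjectivity is the main obstacle. Given $(\scr L,\scr L_\leqslant)$, I would first take $(E^0,\nabla^0)$ to be the regular-singular-twisted connection corresponding to the local system $\gr\scr L$, using the classical Riemann--Hilbert correspondence for each $c$. Next, I would choose a covering $\{I_k\}$ on which local lifts $\scr L_{|I_k}\simeq\scr A_\mu\otimes\gr\scr L$ exist. The resulting cocycle consists of filtered automorphisms of $\scr A_\mu\otimes\gr\scr L$ that are unipotent on $\gr$, so its entries lie in $\scr A_\mu^{\leqslant c'-c}$ with identity diagonal. The hard step is to realize this cocycle as $\Xi_k\circ\Xi_{k+1}^{-1}$ for sectorial isomorphisms $\Xi_k$ over $\scr A$, and to produce the convergent $\C\conv{z}$-structure $E$. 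Here I would try the Malgrange--Sibuya argument: the cocycle is flat and, by the moderate-growth and asymptotic condition, lies in the sheaf $\Lambda^{<0}$ of automorphisms asymptotic to the identity. The Malgrange--Sibuya theorem gives $H^1(S^1,\Lambda^{<0})\simeq$ the set of analytic isomorphism classes with fixed formal model, which provides a convergent $E$ and sectorial trivializations $\Xi_k$ splitting the cocycle. The subtlety is that the cocycle entries involve infinite sums $\sum a_\gamma e^{\mu(\gamma)/z}$ rather than a single exponential. I would therefore need the Gevrey-one estimates, that is, that such sums with $\mathrm{Re}(\mu(\gamma)/z)<0$ on overlaps are flat of exponential type and hence sections of $\scr A$ asymptotic to $0$. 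I expect this to follow from the support property, which bounds $|\mu(\gamma)|$ below by $\|\gamma\|$ and ensures uniform convergence. Finally, I would check that $\DR^\mu$ of the constructed object recovers $(\scr L,\scr L_\leqslant)$ by applying the construction of the first step.
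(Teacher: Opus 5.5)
Your construction of $\DR^\mu$ and the overall architecture match the paper: internal Hom for full faithfulness, and a non-abelian $H^1$ plus Malgrange--Sibuya for essential surjectivity. The essential-surjectivity step, however, rests on a false claim: for $L\neq 0$ the cocycle is not flat.

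Transported to $E^0$, the transition maps of $\scr{L}$ are sections of $\aut^{<0}_\mu(E^0)=\id+\sum_{c+c'<0}\scr{A}_\mu^{\leqslant c}\,\scr{H}^0\DR_{\leqslant c'}(\End(E^0))$. These are $\scr{A}_\mu$-combinations of flat endomorphisms, and coefficients such as $e^{\mu(\gamma)/z}$ are not horizontal. Nor is the ``diagonal'' the identity: in \S\ref{Example G} the entire rank-one cocycle is $1-u$.

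Hence the flat Malgrange--Sibuya theorem does not apply; it identifies $H^1(S^1,\Lambda^{<0})$ with isomorphism classes of meromorphic connections with a given formal model. No version of it could give what you ask for. If $\Xi_k\circ\Xi_{k+1}^{-1}$ commuted with $\nabla^0$, the sectorial connections $\nabla^k=\Xi_k^{-1}\nabla^0\Xi_k$ would glue to a global meromorphic connection on $E$. That is exactly what fails here: $\nabla^d\neq\nabla^{d'}$ in the Gamma example.

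The paper instead uses only two facts:
\begin{enumerate}
\item the inclusion $\aut^{<0}_\mu(E^0)\subset\id+\End(E^0)\otimes\scr{A}^{<0}$, which is where the estimate you anticipate, \eqref{inequality for A} from the support property, enters;
\item the non-flat Gevrey-one Malgrange--Sibuya theorem: the map $H^1(S^1,\id+\End(E^0)\otimes\scr{A}^{<0})\to H^1(S^1,\id+\End(E^0)\otimes\scr{A})$ is trivial.
\end{enumerate}
This splits the cocycle by sectorial isomorphisms $\Xi_k$ over $\scr{A}$. An object of $\dR_{C,\mu}$ carries no connection on $E$, so one simply takes $E=E^0$ as a bare $\C\conv{z}$-vector space. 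There is no convergent structure to construct.

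The same conflation of ``$\scr{A}_\mu$-combination of flat sections'' with ``flat'' affects your full-faithfulness step. Global sections of $\DR^\mu_{\leqslant 0}(\hom(E,F))$ need not be horizontal, so the correspondence to prove is not between horizontal maps and $H^0(S^1,\scr{L}_{\leqslant 0})$. Also, $g\in\Hom_{\C\conv{z}}(E,F)$ follows from moderate growth on all of $S^1$, not from a common asymptotic expansion.

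The paper (Theorem \ref{homiso}) extracts $f^0$ from $\gr_0$. This graded piece consists of genuine flat morphisms $E^0\to F^0$ precisely because the elements of $C$ are pairwise incongruent modulo $\mu(L)$. It then compares with the unique splittings over generic half-circles $\mathbb{I}_d$ (Theorem \ref{local to global split}, Proposition \ref{summability}). Your sketch uses neither the incongruence hypothesis nor the generic directions.

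Be aware, though, that with the strict equality \eqref{mor} this comparison only controls the canonical lifts on $\mathbb{I}_d$, not arbitrary $\Xi_{\mathsf{u}(p)}$ on short arcs. Take rank one with $K\geq 4$, and replace a single $\Xi_k$ by $(1+e^{\mu(\gamma)/z})\Xi_k$ with $\mu(\gamma)<_{I_k}0$. The resulting object has literally the same filtered $\DR^\mu$, yet $\Hom_{\dR}$ between the two objects is zero. So for full faithfulness you must read \eqref{mor} modulo $\aut^{<0}_\mu$, i.e.\ take $\Hom_{\dR}(E,F)$ to be $H^0(S^1,\DR^\mu_{\leqslant 0}(\hom(E,F)))$.
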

This is a generalization 
of Deligne--Malgrange Riemann--Hilbert correspondence 
for meromorphic connections 
of unramified exponential type.
Indeed, in
the case $L=0,\mu=0$, the sheaf $\scr{A}_\mu$
is the constant sheaf $\C_{S^1}$, 
$\dR_{C,\mu}$
is equivalent to the category
of meromorphic connections of 
unramified exponential type 
(Remark \ref{dRgeneralize}),
and 
$\Be_{C,\mu}$
is equivalent to 
the category of 
Stokes-filtered local systems
(Remark \ref{Begeneral}).
Under those equivalences, the functor $\DR^\mu$
corresponds to the classical de Rham functor 
of Deligne--Malgrange. 

We also have the duality 
functor $\mathbb{D}_z\colon \dR_{C,\mu}\to \dR_{C,\mu}^{\rm op}$
and hence a solution functor 
$\Sol_\mu=\DR^\mu\circ \mathbb{D}_z$.

\subsection{Comparison of isomorphisms 
in dimension one}
\label{Intro 1dim}
The Riemann–Hilbert framework introduced above
admits a concrete realization when $\dim X=1$.
For a pair $(X,\alpha)$ of dimension one
satisfying the assumptions stated below, 
we construct canonically
\begin{itemize}
    \item an object $(E_{X,\alpha},\mathcal{I},\Xi,(E_{X,\alpha}^0,\nabla^0))$ of $\dR_{C_f,\mu_\alpha}$, and
    \item an object $(\scr{L}_{X,\alpha},\scr{L}_{X,\alpha\leqslant })$ of $\Be_{C_f,\mu_\alpha}$,
\end{itemize}
where the finite set $C_f\subset \C$
is fixed in the construction 
and the map $\mu_\alpha\colon L\to \C$
is defined in the same way as in \eqref{intromu}.
The construction is parallel in spirit
to that of Kontsevich–Soibelman,
although our formulation uses
Stokes structures instead of
wall-crossing structures.
For technical reasons,
and in order to simplify the geometric description,
we restrict ourselves to the case $\dim X=1$.

The second main theorem of this paper
establishes that these objects
are naturally identified
under the functor $\Sol_{\mu_\alpha}$.
Thus, in dimension one,
the comparison between the de Rham
and Betti sides is formulated
as the compatibility
with the solution functor.
We shall explain the formulation 
briefly. 

Let $X$ be a compact Riemann surface and $\alpha$ be a meromorphic one form on $X$
which 
has a non-empty finite 
set $Z$ of zeros and a
non-empty finite set $D$ of poles.
 We also assume the support property
 for reduced period map 
 $\mu_\alpha\colon L\to \C$ defined
 by \eqref{intromu}. 
We fix a function 
$f\colon \widetilde{Y}\to \C$
over the universal covering map
$\pi_Y\colon \widetilde{Y}\to Y$
such that $df=\pi_Y^*\alpha$. 
We may fix $C_f\subset \C$
so that $C_f+\mu_\alpha(H_1(Y,\Z))$
is the set of critical values of $f$.
We set $\mu=\mu_\alpha$ in the following. 

On de Rham side,
we consider  the first cohomology of 
the $\alpha$-twisted de Rham complex
\eqref{Intro dR global},
which defines an $\C\conv{z}$-vector 
space $E_{X,\alpha}$, which is regarded 
as the global de Rham structure. 
The local de Rham structure $E^{0}_{X,\alpha}$
is localization of 
the first hypercohomology of 
\eqref{Intro localdR}.
We have a decomposition $E^0_{X,\alpha}=\bigoplus_{c\in C_f} E_c^0$.
The connection $\nabla^0$ is defined
so that $\nabla^0_{|E^0_c}-d(c/z)$
is regular singular. 
The isomorphism $\phi_\dR$
corresponds to a formal isomorphism
\[\widehat{\Xi}_{X,\alpha}
\colon E_{X,\alpha}\otimes \C\jump{z}
\longrightarrow E_{X,\alpha}^0\otimes \C\jump{z}.\] 
In the proof of the second 
main theorem,
we construct the data  
$(\mathcal{I},\Xi_{X,\alpha})$ of analytic lifts 
of $\widehat{\Xi}_{X,\alpha}$. 

On Betti side,
we consider the rapid decay homology  of 
a local system
\begin{align*}
    \scr{A}_\mu(I) \exp(-z^{-1}f)
\end{align*}
of $\scr{A}_\mu(I)$-modules
on $Y$ for each open interval
$I\subset S^1$. 
We regard the 
induced sheaf of $\scr{A}_\mu$-modules
$\scr{L}_{X,\alpha}$
as global Betti structure. 
The local Betti structure
is the first homology  
version of Kontsevich--Soibelman
\eqref{Intro Betti local},
which defines a local system $\scr{F}_{X,\alpha}$
with graded Stokes filtration with exponential factor $C_f$.
The tensor product 
$\scr{G}_{X,\alpha}=\scr{A}_\mu\otimes \scr{F}_{X,\alpha}$
is regarded as the local Betti structure. 
We have a natural morphism 
\[\eta_d^{X,\alpha}\colon \scr{G}_{X,\alpha|{\mathbb{I}_d}}\to \scr{L}_{X,\alpha|{\mathbb{I}_d}}\]
for  
$d\in \R$ with 
$(C_f,\mu)$-generic condition
(see Definition \ref{Cmugeneric})
and \[{\mathbb{I}_d}=\{e^{\i\theta}\in S^1\mid 
|\theta-d|<\pi/2\}.\] 
In the proof of the main theorem,
we see that $\scr{L}_{X,\alpha}$
is equipped with a filtration
$\scr{L}_{X,\alpha\leqslant}$ compatible
with the action of $\scr{A}_\mu$. 

The second main theorem of this paper is 
the following:
\begin{theorem}[Theorem \ref{2nd main}]
    Let $(X,\alpha)$ be as above. 
    The following holds:
    \begin{itemize}
        \item The tuple $((E_{X,\alpha},\mathcal{I},\Xi_{X,\alpha}),(E^0_{X,\alpha},\nabla^0))$ is an object of 
        $\dR_{C_f,\mu_\alpha}$.
        \item The pair $(\scr{L}_{X,\alpha},\scr{L}_{X,\alpha\leqslant })$ is an object of 
        $\Be_{C_f,\mu_\alpha}$ with
        $\gr\scr{L}_{X,\alpha}\otimes\scr{A}_\mu\simeq \scr{G}_{X,\alpha}$. 
        \item There is a commutative diagram of isomorphisms
        \begin{align}\begin{split}\label{Intro Diagram}
    \xymatrix@C=7em{
\scr{G}_{X,\alpha|{\mathbb{I}_d}}\ar[r]^{{\rm rh}^{\rm local}}\ar[d]_{\eta^{X,\alpha}_d}& \mathrm{Sol}_{\mu_\alpha }(E^0_{X,\alpha},\nabla^0)_{|{\mathbb{I}_d}}
\ar[d]^{\mathbb{D}_z(\Xi_{X,\alpha})^d}
\\
    \scr{L}_{X,\alpha|{\mathbb{I}_d}}\ar[r]^{{\rm rh}^{\rm global}}
    &\mathrm{Sol}_{\mu_\alpha}
    (E_{X,\alpha},{\Xi}_{X,\alpha})_{|{\mathbb{I}_d}}
     }
\end{split}
\end{align}
of filtered modules over $\scr{A}_\mu$. 
    \end{itemize}
\end{theorem}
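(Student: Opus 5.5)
The proof goes through the Betti side first and obtains the de Rham data by integration. Throughout, the first main theorem (Theorem \ref{main theorem}) is taken for granted, together with the duality $\mathbb{D}_z$.

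\textbf{Step 1: the Betti structure.} For each interval $I\subset S^1$ I will compute the rapid decay homology $H_1^{\rm rd}(Y,\scr{A}_\mu(I)e^{-f/z})$ by a Lefschetz-thimble decomposition. Fix a $(C_f,\mu)$-generic direction $d$. Then for $\theta\in\mathbb{I}_d$ the critical values of $f$ have pairwise distinct projections onto the direction $e^{\i d}$, modulo the real parts of $\mu(L)$ that are irrelevant by genericity.

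For each zero $p\in Z$ and each lift to $\widetilde{Y}$, the gradient flow of $\mathrm{Re}(e^{-\i\theta}f)$ gives a thimble $\Gamma_p$. Its ends go to the poles $D$ in the rapid decay directions. It is weighted by $e^{-(f(\tilde p)-c)/z}$, which lies in $\scr{A}_\mu$ because $f(\tilde p)-c\in\mu(L)$.

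Changing the lift of $p$ multiplies the class by a unit $\exp(\mu(\gamma)/z)$. This yields the map $\eta_d^{X,\alpha}\colon\scr{G}_{X,\alpha}\to\scr{L}_{X,\alpha}$ on $\mathbb{I}_d$. To show it is an isomorphism, I use the standard Morse-theoretic argument on the cover $\widetilde{Y}$, which is possible because $\dim X=1$. The support property of $\mu$ is what ensures that the infinitely many lifted critical points contribute convergent sums.

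I then define $\scr{L}_{X,\alpha\leqslant c}$ as the classes whose pairing with $e^{f/z}$-type cycles has growth $\leqslant e^{c/z}$. Equivalently, it is the span of thimbles with critical value $\leqslant c$ in the order determined by $\theta$. This exhibits $\scr{L}\simeq\scr{A}_\mu\otimes\gr\scr{L}$ locally, with $\gr\scr{L}\otimes\scr{A}_\mu\simeq\scr{G}_{X,\alpha}$. Together these give the second bullet.

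\textbf{Step 2: the de Rham structure.} The plan is to pair de Rham classes in $E_{X,\alpha}$ with the thimble basis by $\omega\mapsto\int_{\Gamma}e^{-f/z}\omega$. Near each zero, stationary phase shows that these integrals are Gevrey-one asymptotic to the formal pairing that defines $\phi_\dR$, and hence to $\widehat{\Xi}_{X,\alpha}$.

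I choose the covering $\mathcal{I}=\{I_k\}$ by the sectors of width slightly larger than $\pi$ around the generic directions $d_k$. On each $I_k$, the period matrix against the thimbles, followed by the local comparison $\mathrm{rh}^{\rm local}$, defines $\Xi_k$; this is essentially Borel–Laplace summation in the direction $d_k$. The transition $\Xi_k\Xi_{k+1}^{-1}$ is then the change-of-thimble matrix across the Stokes rays between $d_k$ and $d_{k+1}$. By the Picard–Lefschetz formula its entries are intersection numbers times factors $\exp((c'-c+\mu(\gamma))/z)$ with $\mathrm{Re}$ in the decaying direction, so they lie in $\scr{A}_\mu^{\leqslant0}$. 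They are asymptotic to the identity because the off-diagonal entries are exponentially small. This gives the first bullet.

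\textbf{Step 3: the diagram.} Commutativity of \eqref{Intro Diagram} then holds almost by construction. Both $\mathrm{rh}$ maps are integration pairings, local near $Z$ and global on $Y$, and $\mathbb{D}_z(\Xi_{X,\alpha})^d$ is by definition the transpose of the period matrix used to build $\Xi$. What remains to check is compatibility with filtrations and with the $\scr{A}_\mu$-action, which is linearity of integration in the weights $\exp(\mu(\gamma)/z)$.

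The main obstacle is Step 1 with its infinite sums. The cover $\widetilde{Y}$ carries infinitely many critical points, and I need the rapid decay homology over the ring $\scr{A}_\mu(I)$ to be free on the thimble classes. I would first try to reduce to the finite cover associated with $H_1(Y)\to L$, and use the support property to bound the number of lifted critical values in half-planes. This should give convergence of the Picard–Lefschetz expansions and their moderate growth.
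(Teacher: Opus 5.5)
Your overall architecture (thimbles, stationary phase at the zeros, transitions between directions) is reasonable. However, it runs in the opposite order from the paper, and Step~1 has a real hole.

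\textbf{The gap in Step~1.} You assert that the thimbles through the zeros ``end at the poles in rapid decay directions'' and then invoke ``the standard Morse-theoretic argument''. Since $\mathrm{Re}(e^{-\i d}f)$ is not proper, Morse theory gives nothing until you know the flow is tame, and that is the actual input. For a $(C_f,\mu_\alpha)$-generic $d$, the straight-line flow of $e^{-\i d}\alpha$ has no saddle connections and no closed trajectories, since either would put an element of $\Omega_{C_f,\mu_\alpha}$ on $\R e^{\i d}$. By the result of Boissy used in the proof of Lemma~\ref{etaX}, together with the local models at the poles, every trajectory then converges to a pole. Only with this are the paths $\gamma^{(j)}_{\ell,d}$ defined, and only then does $Y$ retract, relative to the rapid decay ends, onto the stars at the zeros.

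\textbf{The obstacle you single out is not there.} Work on $Y$ with the rank-one local system $\scr{M}^I_{X,\alpha}$: there are only $s$ critical points. Once tameness is known, the homology near $d$ is free over $\scr{A}_\mu$ on the $\sum_j m_j$ thimble classes, with no infinite sums. Your proposed reduction also fails: the cover attached to $H_1(Y,\Z)\to L$ has deck group $L$, so it is infinite whenever $L\neq0$.

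Genuinely infinite sums enter on the Betti side only at simple poles. Suppose $p_k$ is a sink for $d$ but $e^{-\i d}\alpha_k\notin\R_{<0}$. Then a path ending at $p_k$ is a relative cycle only where $\mathrm{Re}(e^{-\i\theta}\alpha_k)<0$, which is not all of $\mathbb{I}_d$. Elsewhere it must be replaced, up to orientation, by $(1-\tau_k)^{-1}$ times a small loop around $p_k$, where $\tau_k=\exp(\pm2\pi\i\alpha_k/z)$ is the monodromy. That inverse is a geometric series in $u^{\pm\gamma_k}$, convergent in $\scr{A}_\mu$ only where $|\tau_k|\neq1$.

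Note also that the filtration you define in Step~1 via $d$-thimbles depends on $d$. So the second bullet cannot be concluded before Step~2.

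\textbf{Step~2 and how the paper avoids it.} You obtain \eqref{aut} from Picard--Lefschetz. When $\mathrm{rank}\,L\geq2$ the non-generic directions are dense, so the change of thimbles between $d_k$ and $d_{k+1}$ is a priori an infinite ordered product. ``Intersection numbers times decaying exponentials'' says nothing about its convergence in $\scr{A}_\mu$.

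The paper never does this bookkeeping:
\begin{enumerate}
\item It defines $\mathbb{D}_z(\Xi_{X,\alpha})^d$ directly by the $h_{j,k}^{-1}$-normalized integrals over $\Gamma^{(j)}_{k,d}$.
\item It proves by stationary phase that the expansion of this lift is $\mathbb{D}_z\widehat{\Xi}_{X,\alpha}$, and checks the diagram by direct computation.
\item From the invertibility of $\mathrm{rh}^{\rm local}$ and of the lift, it deduces that $\eta^{X,\alpha}_d$ and $\mathrm{rh}^{\rm global}$ are isomorphisms.
\item It defines the filtration on $\scr{L}_{X,\alpha}$ by pulling back along $\mathrm{rh}^{\rm global}$, which is intrinsic.
\item The transition $[\mathbb{D}_z(\Xi_{X,\alpha})^d]^{-1}\circ\mathbb{D}_z(\Xi_{X,\alpha})^{d'}$ corresponds via $\mathrm{rh}^{\rm local}$ to the $\scr{A}_\mu$-linear automorphism $(\eta^{X,\alpha}_d)^{-1}\circ\eta^{X,\alpha}_{d'}$ of $\scr{G}_{X,\alpha}$, so it has coefficients in $\scr{A}_\mu$.
\item The transition is asymptotic to the identity because both lifts have the same expansion; this gives \eqref{aut}.
\end{enumerate}

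Once repaired, your Step~1 gives the same shortcut: two $\scr{A}_\mu$-bases on $\mathbb{I}_{d_k}\cap\mathbb{I}_{d_{k+1}}$ differ by an $\scr{A}_\mu$-matrix, so Picard--Lefschetz is unnecessary. What your route adds is a direct topological proof that $\eta^{X,\alpha}_d$ is an isomorphism, which the paper only infers from the diagram.

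Finally, drop the sectors ``slightly wider than $\pi$'' and the appeal to Borel--Laplace summation along $d_k$. The half-circles $\mathbb{I}_d$ already give a cyclic covering, and summability along generic directions is not established here beyond the case $\mathrm{rank}\,L=1$.
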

We would like 
to see this theorem as 
the comparison of isomorphisms theorem for $(X,\alpha)$. 
The morphisms $\rh^\loc$ and $\rh^\gl$
in \eqref{Intro Diagram}
are the counterparts
of $\RH_\loc$ and ${\RH}_\gl^{\rm formal}$
in \eqref{KS},
respectively. 
$\mathbb{D}_z(\Xi_{X,\alpha})^d$
is the dual of an analytic lift of $\widehat{\Xi}_{X,\alpha}$
over ${\mathbb{I}_d}$, the explicit description
of which constitutes the main part of the proof.

\subsection{Further directions}
The pair $(X,\alpha)$, both in dimension one and in higher dimensions,
naturally appears as a mirror Landau-Ginzburg model
of a semi-projective variety $X^\vee$
equipped with a torus action \cite{Iritani2017EquivariantMirror}.
It would be natural to 
formulate 
equivariant gamma conjecture III 
as an equivariant generalization of \cite{sanda2020analogue}
using the Riemann--Hilbert correspondence established in this paper (Theorem \ref{main theorem}). 

It is also worth mentioning that when $\alpha$ is exact, i.e., $\alpha = df$,
the Hodge theory associated with $f$ has been extensively developed in the framework of exponential, irregular, and non-commutative Hodge structures.
This theory can be viewed as providing a substantial enrichment of the integrable twistor structure naturally attached to $f$.
See, for instance,  \cites{katzarkov2008hodge,Deligne,esnault20171,sabbah2018irregular,mochizuki2018twistor,mochizuki2025rescalability}, without attempting to give a complete list.
When $\alpha$ is closed but not exact,
one is naturally led to the theory of non-integrable twistor structures \cite{mochizuki2011wild}.
Understanding the relationship between our de Rham and Betti structures associated with $\alpha$ and the corresponding non-integrable twistor structures might be an interesting direction for future research.

\subsection{Contents of the paper}
In \S \ref{Preliminary}, 
we prepare some notations 
for Gevrey order one asymptotic analysis
and de Rham functors 
for stalks of meromorphic connections. 
We also briefly review the Borel-Laplace transform,
which will be used in a concrete example
and will play a role in future applications
to equivariant Gamma conjectures.
In \S \ref{SRH},
we formulate the Riemann--Hilbert correspondence
explained in \S \ref{Intro RH}. 
The proof is given in \S \ref{PROOF}.
The comparison of isomorphisms theorem described in \S \ref{Intro 1dim}
is formulated and proved in \S \ref{Comp1}.

\subsection{Acknowledgment}
The author would like to express his sincere gratitude to Claude Sabbah for his helpful comments on an earlier version of this manuscript.
He is also grateful to Takuro Mochizuki, Takahiro Saito, and Fumihiko Sanda for valuable discussions and encouragement.
He thanks the anonymous referee for their careful reading and valuable suggestions.

This study is supported by JSPS KAKENHI Grant Numbers 24K16925 and 20K14280. 
This work was also supported by the Research Institute for Mathematical Sciences,
an International Joint Usage/Research Center located in Kyoto University.

On behalf of all authors, the corresponding author states that there is no conflict of interest.

\section{Preliminary}\label{Preliminary}
In this section, we fix some notation and terminology used in this paper on the Borel-Laplace transformation and the de Rham functor for stalks of meromorphic connections.
We also recall some standard facts. 
Since these facts are standard, we refer the reader to the textbooks \cite{Balser2000} and \cite{Sabbah}. 
We fix the notations for cyclic covers.

\subsection{Borel-Laplace transform}
\label{SBL}
Let $\C$ be the set of complex numbers. 
A formal power series 
$\widehat{f}(z)=\sum_{n=0}^\infty a_nz^n$ $(a_n\in \C)$
is called a Gevrey power series of order one,
if there exists a positive constant 
$C>0$ such that 
\begin{align*}
    |a_n|<C^{n+1}n!
\end{align*}
for all $n\geq 0$. 
Let $\C\jump{z}_1$ denote the ring of formal power 
series of Gevrey order one. 
For $d\in \R$ and $\Theta,\rho>0$,
we set \[S=S_z=S_z(d,\Theta,\rho)=\{z=r e^{\i\theta}\mid 0<r<\rho,|d-\theta|<\Theta/2\},\]
which we call a sector (in the $z$-plane).
A closed subsector $\overline{S'_z}$ in $S_z$
is a subset $\overline{S'_z}\subset S_z$
defined as 
$\overline{S_z'}=\overline{S_z'}(d',\Theta',\rho')=\{z=re^{\i\theta}\in \C\mid 
0<r\leq \rho', |\theta-d'|\leq \Theta'\}$
for some $d'\in \R$ and $\Theta',\rho'>0$.
\begin{definition}
    Let $f$ be a holomorphic function 
 on a sector $S_z$. Let $\widehat{f}(z)=\sum_{n=0}^\infty a_nz^n$ be an element in $\C\jump{z}_1$. 
 We say that $f$ has $\widehat{f}$ as its 
    asymptotic expansion (of Gevrey order one)
    if for any 
    closed subsector $\overline{S'_z}\subset S_z$
    there exists a positive constant $C>0$
    such that for any positive integer $N$,
    the inequality
    \begin{align*}
        |z|^{-N}\left|f(x)-\sum_{n=0}^{N-1}a_nz^n\right|<C^{N+1}N!
    \end{align*}
    holds for any $z\in \overline{S_z'}$. 
    The set of holomorphic functions on $S_z$
    which have an element in $\C\jump{z}_1$
    as its expansion 
    is denoted by $A(S_z)$. 
\end{definition}
The set $A(S_z)$ is known to be a subring
of the ring of holomorphic functions on $S_z$.
For $f\in A(S_z)$,
its asymptotic expansion $\widehat{f}$ is known to be unique.
Hence we use the notation 
    $\asy(f)=\widehat{f}$.
It is also known that the map 
$\asy\colon A(S_z)\to \C\jump{z}_1, f\mapsto \asy(f)$
is a ring homomorphism.

Let $\C\{\zeta\}$ denote the ring of convergent power series in the variable $\zeta$. 
The formal Borel transform
\begin{align*}
    \widehat{\cal{B}}\colon \C\jump{z}_1\longrightarrow\C\{\zeta\}
\end{align*}
is defined as \[\widehat{\cal{B}}(\widehat{f})=\sum_{n=0}^\infty (n!)^{-1}a_n\zeta^n\]
for $\widehat{f}(z)=\sum_{n=0}^\infty a_nz^n\in \C\jump{z}_1$.

For $d\in \R$ and $\varepsilon>0$,
we set 
$S_\zeta=S_\zeta(d,\varepsilon)=\{\zeta=re^{\i\theta}\in  \C\mid r>0,|\theta-d|<\varepsilon \}$,
which we call an unbounded sector in  the $\zeta$-plane. 
A closed subsector of $S$ is defined 
as a subset $\overline{S'_\zeta}\subset S_\zeta$
of the form $\overline{S'_\zeta}=\overline{S'_\zeta}(d',\varepsilon')= \{\zeta=re^{\i\theta}\in\C\mid |\theta-d'|\leq \varepsilon'\}$
for $d'\in \R$ and $\varepsilon'>0$. 
\begin{definition}
    For an unbounded sector $S$ in $\zeta$-plane and 
    $g(\zeta)\in \C\{\zeta\}$,
    we say that $g(\zeta)$ is of exponential
    size one on $S_\zeta$ if the following conditions hold:
    \begin{itemize}
        \item There exists an analytic continuation of 
    $g$ on $S_\zeta$, which is also denoted by $g$.
        \item For any closed subsector $\overline{S'_\zeta}\subset  S_\zeta$, there exist constants
        $C,h>0$ such that 
        the inequality
        $|g(\zeta)|\leqslant Ce^{h|\zeta|}$
        holds for $\zeta\in \overline{S'_\zeta}$. 
    \end{itemize}
\end{definition}
For $g(\zeta)=\sum_{m=0}^\infty b_m\zeta^m\in\C\{\zeta\}$ which is of
exponential size one 
on $S_\zeta(d,\varepsilon)$ for some $\varepsilon>0$,
we define the Laplace transform in
the direction $d$ by
\begin{align*}
    \cal{L}_d(g)(z)=z^{-1}\int_0^{\infty e^{\i d}}g(\zeta)e^{-\zeta/z}d\zeta.
\end{align*}
Here, the integral is defined 
over the path $\R_{\geq 0}\to\C, t\mapsto e^{\i d}t$. 
It is known that for any $\epsilon>0$
there exists $\rho(\epsilon)>0$ such that 
\[\cal{L}_d(g)(z)\in A(S_z(\pi+\varepsilon-\epsilon ,\rho(\epsilon))).\] 
\begin{theorem}[\cite{Balser2000}*{Theorem 33}]\label{summable}
    For $\widehat{f}\in \C\jump{z}_1$,
    and a direction $d\in\R$, 
    the following conditions are equivalent to each other$:$
    \begin{itemize}
        \item There exist $\Theta>\pi$, $\rho>0$, and  $f\in {A}(S_z(d,\Theta,\rho))$ such that 
         $\asy(f)=\widehat{f}$. 
        \item $\widehat{\cal{B}}(\widehat{f})(\zeta)$ is 
        of exponential size one on 
        $S_\zeta(d;\varepsilon)$
        for some  $\varepsilon>0$.
    \end{itemize}
    Moreover, if these conditions are satisfied, then we have $f=\cal{L}_d(\widehat{\cal{B}}_d(\widehat{f}))$.
\end{theorem}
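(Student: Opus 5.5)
The plan is to prove the two implications of Theorem~\ref{summable} separately, taking the Watson-type lemma for sectors of opening $>\pi$ as the key analytic input.

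\emph{Second condition implies the first.} Suppose $g=\widehat{\cal B}(\widehat f)$ is of exponential size one on $S_\zeta(d,\varepsilon)$. For directions $d'$ with $|d'-d|<\varepsilon$ we form $\cal L_{d'}(g)$. Cauchy's theorem, applied on the sector between the rays $d$ and $d'$ with the exponential bound $|g|\le Ce^{h|\zeta|}$ killing the arc contributions, shows that these Laplace transforms agree on the overlaps of their half-planes of convergence. They therefore glue to a holomorphic function $f$ on a sector of opening $\pi+2\varepsilon-\epsilon$ around $d$, of small radius $\rho$ determined by $h$.

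To get the Gevrey asymptotics, we write $g(\zeta)=\sum_{n<N}b_n\zeta^n+\zeta^N R_N(\zeta)$. Using $z^{-1}\int_0^{\infty e^{\i d'}}\zeta^n e^{-\zeta/z}d\zeta=n!z^n$ recovers the partial sums $\sum_{n<N}a_nz^n$. The remainder needs a bound $|R_N(\zeta)|\le C_1^{N+1}e^{h'|\zeta|}$ on closed subsectors. Near $0$ this comes from Cauchy estimates for the convergent series; far out it comes from the exponential bound divided by $|\zeta|^N$. Integrating $|\zeta|^N e^{-|\zeta|\cos(\cdot)/|z|}$ then produces $C^{N+1}N!|z|^N$. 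This shows $f\in A(S_z)$ with $\asy(f)=\widehat f$, and also the final formula $f=\cal L_d(\widehat{\cal B}(\widehat f))$ in this direction.

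\emph{First condition implies the second.} Given $f\in A(S_z(d,\Theta,\rho))$ with $\Theta>\pi$, we would use a truncated Borel transform
\[
g(\zeta)=\frac{1}{2\pi\i}\int_\gamma f(z)e^{\zeta/z}\,\frac{dz}{z},
\]
where $\gamma$ is a loop inside the sector: it starts at $0$ along the ray $\arg z=d+\delta_1$, follows an arc of radius $\rho'<\rho$, and returns along the ray $\arg z=d-\delta_2$, with opening slightly larger than $\pi$. The steps are as follows.
\begin{itemize}
\item Convergence: for $\zeta$ in a small sector $S_\zeta(d,\varepsilon)$, $\mathrm{Re}(\zeta/z)\to-\infty$ along both rays, so the integral converges.
\item Growth: the arc of radius $\rho'$ contributes at most $e^{|\zeta|/\rho'}$, so $g$ is of exponential size one.
\item Identification: we must show that $g$ is the analytic continuation of $\widehat{\cal B}(\widehat f)$. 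Substituting $f=\sum_{n<N}a_nz^n+r_N$, each monomial integrates exactly to $a_n\zeta^n/n!$ (Hankel's formula for $1/\Gamma$, with the loop deformed). The remainder term is bounded by $C^{N+1}N!\,|\zeta|^N/N!$-type quantities after optimizing the radius, and these tend to $0$ for small $|\zeta|$. Hence the Taylor coefficients of $g$ at $0$ are $a_n/n!$.
\end{itemize}

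The last step is the main obstacle. If the direct estimate is too lossy, the fallback is the route in Balser's proof: work with $f-\cal L_d(g)$. This difference has zero Gevrey-one asymptotic expansion on a sector of opening $>\pi$. By Watson's lemma it vanishes, which gives both the identification and the final formula $f=\cal L_d(\widehat{\cal B}(\widehat f))$ in the statement. The uniqueness half of Watson's lemma is itself proved by a Phragmén--Lindelöf argument applied to $\exp(-c/z)$-type decay on the sector. Since the paper cites \cite{Balser2000} for this, we would quote that step rather than redo it.
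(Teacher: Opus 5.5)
Your argument is correct and is essentially the standard proof in the cited source; the paper gives no proof of its own, only the reference to Balser. You use the Laplace transform with Gevrey remainder bounds in one direction, the Borel integral over a loop of opening $>\pi$ in the other, and Watson's lemma for the final formula.

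Two organizational corrections are needed. First, the direct remainder estimate is not too lossy, and your fallback cannot replace it: the fallback is circular, since knowing that $f-\mathcal{L}_d(g)$ has zero expansion already presupposes the Taylor series of $g$ at $0$. The direct estimate works: take $\rho'=|\zeta|/N$, which is legitimate because the loop integral is independent of $\rho'$ by Cauchy's theorem, and the remainder is $O(\sqrt{N}K^N|\zeta|^N)$. Second, Watson's lemma is needed unconditionally, not as a fallback, because the $f$ in the first condition is given rather than constructed, so the final formula is a uniqueness statement. Also, with your orientation of $\gamma$ the monomials integrate to $-\zeta^n/n!$, so reverse the orientation or the sign.
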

A formal series
$f\in\C\jump{z}_1$
is called $1$-summable along $d$ if it satisfies the conditions in 
Theorem \ref{summable}. 
We set 
\begin{align*}
    \C\{z\}_{1,d}=\{f\in\C\jump{z}_1\mid f\text{ is $1$-summable along $d$.}\},
\end{align*}
which is known to be a subring in $\C\jump{z}_1$.
We then set $\cal{S}_d=\cal{L}_d\circ \widehat{\cal{B}}$. 
We note that for $f,g\in \C\{z\}_{1,d}$,
equations $\cal{S}_d(fg)=\cal{S}_d(f)\cal{S}_d(g)$
and $\cal{S}_d(f+g)=\cal{S}_d(f)+\cal{S}_d(g)$
hold.
Hence $\cal{S}_d$
is extended to 
the quotient field of $\C\{z\}_{1,d}$.

\subsection{Sheaves on a circle} \label{sheaves on S^1}
Let ${S}^1=\{w\in\C\mid |w|=1\}$
be the unit circle. 
Let $\widetilde{\C}_0=\{(z,w)\in \C\times {S}^1\mid z=|z|w\}$
be the real oriented blow-up 
of $\C$ at the origin. 
Let $\varpi\colon\widetilde{\C}_0\to \C$,
defined by
$\varpi(z,w)=z$,
denote the projection. 
Set $\C^*=\C\setminus \{0\}$.
Let $\jmath\colon \C^*\to \widetilde{\C}_0$,
$\jmath(z)=(z,z/|z|)$ and 
$\imath\colon {S}^1\to \widetilde{\C}_0$,
$\imath(w)=(0,w)$ denote the inclusions. 
Via the inclusion $\imath$,
we regard $S^1$ as the boundary of $\widetilde{\C}_0$. 

For a complex manifold 
$M$ such as $\C$ or $\C^*$,
let $\scr{O}_M$ denote the sheaf of 
rings of holomorphic functions on $M$. 
We then set $\widetilde{\scr{O}}=\imath^{-1}\jmath_*\scr{O}_{\C^*}$, which is a sheaf of rings on $S^1$.
Let  
$\scr{A}^{\leqslant 0}$ be the subsheaf of moderate growth
functions in 
$\widetilde{\scr{O}}$. 
Here, 
    $f\in \widetilde{\mathscr{O}}$ on $I\subset S^1$, represented by a holomorphic function 
    $\widetilde{f}\in \widetilde{\jmath}_*\scr{O}_{\C^*}$ on an open subset $\widetilde{I}\subset\widetilde{\C}_0$ with 
    $I=S^1\cap \widetilde{I}$ is \textit{of moderate growth}
    if for any compact subset $K\subset \widetilde{I}$, there exist constants $C_K>0$
    and $N_K\geq 0$ such that 
    \[ |f(z) |\leq C_K|z|^{-N_K} \text{ for any }
    z\in K\setminus I.
    \]

We also define the sheaf
$\scr{A}\subset \widetilde{\scr{O}}$
of functions with asymptotic expansions 
of Gevrey order one. 
For connected $I\subset S^1$,
using the notation as above,
a section $f\in \scr{O}(I)$
is in $\scr{A}(I)$ if there is
$\widetilde{I}$ and 
$\widetilde{f}$ as above 
with the condition that 
we have $\widetilde{f}\in A(S_z)$
for any sector $S_z$ in $z$-plane 
such that $\jmath(S_z)\subset \widetilde{I}$. 
By Borel-Ritt theorem,
we have the exact sequence 
\begin{align*}
    0\longrightarrow \scr{A}^{<0}
    \longrightarrow \scr{A}\xrightarrow{\asy}
    \C\jump{z}_{1,S^1}\longrightarrow 0,
\end{align*}
where $\C\jump{z}_{1,S^1}$ denotes the constant 
sheaf, $\asy$ is the asymptotic expansion 
defined above, and $\scr{A}^{<0}$
is defined as the kernel of $\asy$. 
We note that $\scr{A}^{<0}\subset\scr{A}\subset\scr{A}^{\leqslant 0}$. 
\begin{remark}
    In the usual notation,
    $\scr{A}$ and $\scr{A}^{<0}$ are denoted by
    $\scr{A}_1$ and $\scr{A}^{<0}_1$,
    respectively. 
    However, we will not use the sheaf
    $\scr{A}_k$ of functions which have 
    asymptotic expansion of Gevrey order $k$  for $k\neq 1$ in this paper. 
    Hence we omitted the 
    subscript $1$. 
\end{remark}
For $c\in \C$,
$\exp(c/z)$ can be regarded as 
a global section of $\widetilde{\scr{O}}$.
We then set
$\scr{A}^{\leqslant c}=\exp(c/z)\scr{A}^{\leqslant 0}$ 
and 
$\scr{A}^{<c} =\exp(c/z)\scr{A}^{<0}$.
We have $\scr{A}^{\leqslant c}\cdot \scr{A}^{\leqslant c'}\subset \scr{A}^{\leqslant c+c'}$ and $\scr{A}^{\leqslant c}\cdot \scr{A}^{<c'}\subset \scr{A}^{< c+c'}$
for any $c,c'\in \C$. 
For $c,c'\in\C$
and $\theta\in \R$,
we define 
\begin{align*}
    c<_\theta  c'
    \overset{\rm def}{\Longleftrightarrow}
    \mathrm{Re}(e^{-\i\theta}(c-c'))<0. 
\end{align*}
We then define $c\leqslant_\theta c'\overset{\rm def}{\Longleftrightarrow} c<_\theta c$ or $c=c'$.
For an open subset $I\subset S^1$, we say
$c\leqslant_Ic'$ holds
iff we have $c\leqslant_\theta c'$ for any $\theta$ with $e^{\i\theta}\in I$. 
We note that if $c<_Ic'$,
then we have $ \scr{A}^{\leqslant c}_{|I}\subset \scr{A}_{|I}^{<c'}$. 
We set $\scr{A}^{\gr_c}=\scr{A}^{\leqslant c}/\scr{A}^{<c}$ for $c\in \C$.

\subsection{De Rham functors for meromorphic connections}
Let $\scr{O}=\C\{z\}$ denote the ring of convergent power 
series. Let $\C\conv{z}$ be the fractional field of $\scr{O}$.

A stalk of a meromorphic function over ${{\C\conv{z}}}$ 
is a pair $(E,\nabla)$
of 
 finite dimensional ${{\C\conv{z}}}$-vector space $E$ 
and a $\C$-linear map $\nabla\colon E\to Edz$
such that the equality
\begin{align}\label{Lei}
\nabla(fv)=vdf+f\nabla v
\end{align}
holds for any $f\in {\C\conv{z}}$ and $v\in E$.
Here, $d\colon {\C\conv{z}}\to {\C\conv{z}}dz$ denotes
the exterior derivative. 
The $\C$-linear map $\nabla$
is called the connection on $E$. 

For two stalks $(E,\nabla)$
and $(F,\nabla)$ of meromorphic connections over ${\C\conv{z}}$,
the ${\C\conv{z}}$-vector space $\Hom_{{\C\conv{z}}}(E,F)$
of ${\C\conv{z}}$-linear maps from $E$ to $F$
is equipped with the connection $\nabla$.
We use the notation 
\[\hom(E,F)=(\Hom_{{\C\conv{z}}}(E,F),\nabla)\] in this paper. We also set $\End(E) =\hom(E,E)$.
The $\C$-vector space of vectors $f\in\Hom_{{\C\conv{z}}}(E,F)$ with $\nabla(f)=0$
is denoted by $\Hom(E,F)^{\nabla}$.
 
\begin{definition}
    The category 
$\mathsf{Mero}_{{\C\conv{z}}}$ of stalks of meromorphic connections over ${{\C\conv{z}}}$
is defined as follows:
\begin{itemize}
    \item An object of $\mathsf{Mero}_{{\C\conv{z}}}$
    is a stalk of meromorphic connection over ${{\C\conv{z}}}$.
    \item The set of morphisms in $\mathsf{Mero}_{{\C\conv{z}}}$
    between two objects $(E,\nabla)$
and $(F,\nabla)$
is $\Hom(E,F)^{\nabla}$. 
The composition is defined as the composition 
of maps. 
\end{itemize}
\end{definition}

For $(E,\nabla)\in \mathsf{Mero}_{{\C\conv{z}}}$,
we consider de Rham complexes $\DR(E)=[E\xrightarrow{\nabla}Edz]$
and 
\[\widetilde{\DR}(E)=[E_{S^1}\otimes_{{\C\conv{z}}} \widetilde{\mathscr{O}}\xrightarrow{\widetilde\nabla}E_{S^1}\otimes_{{\C\conv{z}}} \widetilde{\scr{O}}dz]
\]
concentrated in degrees $0$ and $1$
where we put $\widetilde{\nabla}=\nabla\otimes\id+\id \otimes d$. The subscripts $S^1$ and ${{\C\conv{z}}}$ will often be 
omitted.

We also define 
$\DR_{\leqslant c}(E,\nabla)$ 
and $\DR_{< c}(E,\nabla)$ for $c\in\C$
as follows:
\begin{align*}
    \DR_{\leqslant c}(E,\nabla)=[E\otimes\scr{A}^{\leqslant c}\xrightarrow{\widetilde{\nabla}} E\otimes \scr{A}^{\leqslant c}dz],\\
    \DR_{< c}(E,\nabla)=[E\otimes\scr{A}^{< c}\xrightarrow{\widetilde{\nabla}} E\otimes \scr{A}^{< c}dz]\\
    \DR_{\gr_c}(E,\nabla)=[E\otimes\scr{A}^{\gr_c}
    \xrightarrow{\widetilde{\nabla}} E\otimes \scr{A}^{\gr_c}dz]
\end{align*}
\begin{remark}
    For an open subset $I\subset S^1$
    and $\scr{A}_{|I}$-module $\scr{M}$
    with connection $\nabla\colon \scr{M}\to \scr{M}dz$ with Leibnitz rule \eqref{Lei}
    for $f\in\scr{A}_{|I}$ and $v\in\scr{M}$,
    we define 
    the complexes $\widetilde{\DR}(\scr{M})$,
    $\DR_{\leqslant c}(\scr{M})$,
    $\DR_{<c}(\scr{M})$ and $\DR_{\gr_c}(\scr{M})$ in 
    a similar way. 
\end{remark}
Although the definition 
of $\DR_{< c}(E,\nabla)$
is different from that in \cite{Sabbah}, 
we obtain the same complex for 
the following class of 
objects:
\begin{definition}
    An object $(E^0,\nabla^0)\in\mathsf{Mero}_{{\C\conv{z}}}$
    is called elementary exponential if it is isomorphic to 
    the direct sum $\bigoplus_{c\in \C}({{\C\conv{z}}}^{r_c},\nabla^{0,c})$
    of the form
    \begin{align*}
        \nabla^{0,c}=d-(c\cdot \id_{{{\C\conv{z}}}^{r_c}}-zA_c)\frac{dz}{z^2}
    \end{align*}
    for $r_c=0,1,\dots$ and $A_c\in \End_\C(\C^{r_c})$. 
    The finite set $\{c\in\C\mid r_c> 0\}$
    is called the exponential factor of $E^0$. 
    
    An object $(E,\nabla)\in \mathsf{Mero}_{{\C\conv{z}}}$
    is called unramified of exponential type if 
    there is an elementary exponential connection $(E^0,\nabla^0)$
    such that we have an isomorphism 
    \[\widehat{\Xi}\colon E\otimes_{\scr{O}}\C\jump{z}_1\to E^0\otimes_\scr{O} \C\jump{z}_1\]
    with $\widehat{\Xi}\circ \widehat{\nabla}=\widehat{\nabla}^0\circ\widehat{\Xi}$, where  
    $\widehat{\nabla}=\nabla\otimes\id_{\C \jump{z}_1} +\id_{{\C\conv{z}}}\otimes d$, and $\widehat{\nabla}^0=\nabla^0\otimes\id +\id\otimes d$.  
\end{definition}
Let $k$ be an integer. 
For a complex $C^\bullet$ of abelian groups, 
$H^k(C^\bullet)$ denotes the $k$-th cohomology
groups. 
For a complex $\scr{C}^\bullet$
of sheaves of modules on a topological space $M$, $\scr{H}^k(\scr{C}^\bullet)$ denotes 
the $k$-th cohomology sheaves
and ${H}^k(M,\scr{C}^\bullet)$
denotes the $k$-th hypercohomology group.
For a sheaf $\scr{G}$ of  non-commutative 
groups on $M$, $H^1(M,\scr{G})$
denotes the first cohomology set of $\scr{G}$.

\subsection{Cyclic covering}
For an integer $K>0$,
we set $\Lambda_K=\Z/K\Z$.
A \textit{cyclic covering} of $S^1$
is an open covering $\mathcal{I}=(I_k)_{k\in\Lambda_K}$, ${S}^1=\bigcup_{k\in \Lambda_K}I_k$ 
by open arcs $I_k$ such that 
$I_{k,\ell}\coloneqq I_k\cap I_\ell$ is non-empty if 
and only if $\ell\in \{k-1,k,k+1\}$.
Furthermore, we require that the non-empty intersection $I_k\cap I_\ell$ consists of 
at most two connected components. 
Note that $I_0\cap I_1$ consists of 
two connected components when $K=2$. 

A \textit{cyclic refinement} of a 
cyclic cover $\mathcal{I}=(I_k)_{k\in\Lambda_K}$ is a pair 
$(\cal{I}',\mathsf{u})$, comprising another cyclic cover $\cal{I}'=(I'_\ell)_{\ell\in\Lambda_L}$ 
and a map $\mathsf{u}\colon \Lambda_L\to\Lambda_K$
such that \[I'_{\ell}\subset I_{\mathsf{u}(\ell)}\]
for every $\ell\in\Lambda_L$.
We use the notation 
$\cal{I}'
\prec_{\mathsf{u}}\mathcal{I}$
to express that $(\cal{I}',\mathsf{u})$
is a refinement of $\mathcal{I}$.
For two cyclic coverings 
$\mathcal{I}$ and $\cal{I}'$,
there exists a covering 
$\cal{I}''$ such that 
$\cal{I}''\prec_{\mathsf{u}} \mathcal{I}$
and $\cal{I}''\prec_{\mathsf{v}}\cal{I}'$. 
We call the triple $(\cal{I}'',\mathsf{u},\mathsf{v})$
a \textit{mutual refinement}
of $\mathcal{I}$ and $\cal{I}'$.

\section{Riemann--Hilbert correspondence}
\label{SRH}
Let $L$ be a lattice, i.e., an 
abelian group which is isomorphic to 
$\Z^{\mathrm{rank}(L)}$ for some non-negative integer $\mathrm{rank}(L)$.
Let $\mu\colon L\to \C$ be a morphism of additive groups. We fix a norm $\|\cdot\|$ on 
$L\otimes_\Z \R$ 
and assume
that $\mu$ satisfies the following 
support property
\cite{kontsevich2008stability}: there exists a constant $R>0$ such that 
\begin{align}\label{support property}
    R\|\gamma\|\leq  |\mu(\gamma)|\quad(\forall \gamma\in L).
\end{align}
In particular, $\mu$ is assumed to be
injective. 

Let $C$ be a finite subset of $\C$.
We assume that 
for distinct $c,c'\in C$,
there is no $\gamma\in L$
with $c=c'+\mu(\gamma)$.
In other words, we assume that 
each element in $C$ represents 
a distinct $L$-orbit 
in $C+\mu(L)\subset \C$. 

In this section,
we formulate a Riemann--Hilbert correspondence 
for each $(C,\mu)$. 
We set
\begin{align*}
    \Omega_{C,\mu}\coloneqq\{(c-c')+\mu(\gamma)\mid 
    c,c'\in C,\gamma\in L\}\setminus\{0\}.
\end{align*}
\begin{definition}\label{Cmugeneric}
    A direction $d\in\mathbb{R}$
    is called $(C,\mu)$-generic
    if the half line
    \[\ell_d=\{\zeta=re^{\i d}\mid 
    r> 0 \}\]
    does not intersect $\Omega_{C,\mu}$. 
\end{definition}
Since $\Omega_{C,\mu}$
is a countable set, 
the set of non-generic directions is countable.
Hence the set of generic directions is dense in $\R$. 
When $\mathrm{rank}(L)>1$,
the set of non-generic directions is also dense by \eqref{support property}.

\subsection{A filtered sheaf of rings}\label{Amu}
For an open interval $I\subset{S}^1$,
and $\gamma\in L$ with $\mu(\gamma)<_I0$,
we have $\exp(\mu(\gamma)/z)
\in \scr{A}^{<0}(I)$.
Generalizing this observation,
we have the following:
\begin{lemma}
A series
\begin{align*}
f(z)=
\sum_{\mu(\gamma)\leqslant_I0}
a_\gamma\exp(\mu(\gamma)/z),
\quad (a_\gamma\in \C)
\end{align*}
is in $\scr{A}(I)$
and $f(z)-a_0\in\scr{A}^{<0}(I)$
if we have 
\begin{align*}
\|f\|_{I,\varrho}
\coloneqq \sum_{\mu(\gamma)\leqslant_I0 }|a_\gamma|\varrho^{\|\gamma\|}<\infty
\end{align*}
for some $\varrho>0$.
\end{lemma}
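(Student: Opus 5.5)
The plan is to prove one uniform estimate: on every closed subsector over a compact subarc of $I$, the terms with $\gamma\neq 0$ are bounded by $\varrho^{\|\gamma\|}$ times a common factor $e^{-c/|z|}$. Everything else follows from this. Since $\mu$ is injective by the support property \eqref{support property}, the only index with $\mu(\gamma)=0$ is $\gamma=0$. So $f=a_0+g$, where $g=\sum_{\gamma\neq0,\ \mu(\gamma)\leqslant_I0}a_\gamma\exp(\mu(\gamma)/z)$. It suffices to show that $g$ converges to a holomorphic function on a sectorial neighbourhood of each compact subarc $J\subset I$ and that $g\in\scr{A}^{<0}$ there. Then $f=a_0+g\in\scr{A}(I)$ with $\asy(f)=a_0$, because constants lie in $\scr{A}$. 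The sheaf property lets us glue over an exhaustion of $I$ by compact subarcs.

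First I reduce the indexing condition to a statement about angles. For $\gamma\neq0$, the condition $\mu(\gamma)\leqslant_I0$ means $\mathrm{Re}(e^{-\i\theta}\mu(\gamma))<0$ for all $e^{\i\theta}\in I$. Writing $\phi=\arg\mu(\gamma)$, this says $|\phi-\theta|>\pi/2$ modulo $2\pi$ for every $e^{\i\theta}\in I$. If $J\subset I$ is a compact subarc, let $\delta_J>0$ be its distance to the boundary of $I$. Then $|\phi-\theta|\geq\pi/2+\delta_J$ for all $e^{\i\theta}\in J$, and hence
\[
\mathrm{Re}\bigl(e^{-\i\theta}\mu(\gamma)\bigr)\leq-\sin(\delta_J)\,|\mu(\gamma)|\leq-\kappa\|\gamma\|,\qquad \kappa\coloneqq R\sin\delta_J,
\]
using \eqref{support property}. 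So for $z=|z|e^{\i\theta}$ with $e^{\i\theta}\in J$ we get $|\exp(\mu(\gamma)/z)|\leq\exp(-\kappa\|\gamma\|/|z|)$, uniformly in $\gamma$. This uniformity over the possibly infinitely many $\gamma$ is the main point. It is exactly where the support property, not just injectivity, is needed, and where it matters that $J$ is compact in the open arc $I$.

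Next I split the exponent. Since $L$ is a lattice, $m\coloneqq\min_{\gamma\neq0}\|\gamma\|>0$. If $\varrho\geq1$, then $e^{-\kappa\|\gamma\|/(2|z|)}\leq 1\leq\varrho^{\|\gamma\|}$ for all $z$. If $\varrho<1$, then $e^{-\kappa\|\gamma\|/(2|z|)}\leq\varrho^{\|\gamma\|}$ as soon as $|z|\leq r_0\coloneqq\kappa/(2\log(1/\varrho))$. In either case, for $|z|\leq r_0$ and $\arg z\in J$,
\[
|a_\gamma\exp(\mu(\gamma)/z)|\leq|a_\gamma|\varrho^{\|\gamma\|}e^{-\kappa m/(2|z|)}.
\]
By the hypothesis $\|f\|_{I,\varrho}<\infty$, the Weierstrass M-test applies. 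Hence $g$ converges uniformly on the sector over $J$ of radius $r_0$, is holomorphic there, and satisfies $|g(z)|\leq\|f\|_{I,\varrho}\,e^{-c/|z|}$ with $c=\kappa m/2$.

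Finally, exponential smallness gives vanishing Gevrey-one asymptotics. The elementary bound $\sup_{t>0}t^{-N}e^{-c/t}=(N/(ce))^N\leq C^{N+1}N!$ (Stirling) yields $|z|^{-N}|g(z)|\leq C^{N+1}N!$ on each closed subsector. Thus $g\in A(S_z)$ with $\asy(g)=0$, i.e.\ $g\in\scr{A}^{<0}$ over $J$. Moderate growth of $g$ is immediate from boundedness. This proves $f\in\scr{A}(I)$ and $f-a_0\in\scr{A}^{<0}(I)$.
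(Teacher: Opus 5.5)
Your proof is correct and follows the paper's argument. The paper bounds $|f(z)-a_0|$ by $\sum|a_\gamma|\exp(-\varepsilon_J R\|\gamma\|/|z|)$ as in \eqref{inequality for A}, and Lemma~\ref{App Lem 1} of the Appendix splits the exponent just as you do, peeling off $\exp(-\varepsilon_{J'} R\mathsf{m}/|z|)$ with $\mathsf{m}<\min_{\gamma}\|\gamma\|$ and absorbing the rest into $\varrho^{\|\gamma\|}$ for small $|z|$; your explicit constant $\kappa=R\sin\delta_J$ also justifies the uniformity of the angle bound over all $\gamma$, which the paper only asserts.
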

\begin{proof}
Assume that $\|f\|_{I,\varrho}$ is finite. 
By the condition, we have 
$\mu(\gamma)<_I0$ for $a_\gamma\neq 0$, $\gamma \neq 0$. 
For any compact subset $J\subset I$,
there exists a constant $\varepsilon_J>0$
such that 
we have $\cos(\arg(\mu(\gamma))-\theta)<-\varepsilon_J$ $(\theta\in J)$ 
for any such $\gamma$. 
Then, 
by the support property \eqref{support property}, 
we have 
\begin{align}\label{inequality for A}
\begin{split}
    |f(z)-a_0|
    &\leq \sum_{\mu(\gamma)<_I0}|a_\gamma|
    \exp(\mathrm{Re}(\mu(\gamma)/z))\\
    &\leq \sum_{\mu(\gamma)<_I0}|a_\gamma|
    \exp(-\varepsilon_JR\|\gamma \|/|z|)
\end{split}
\end{align}
for any $z$ with $\arg(z)\in J$. 
This inequality implies the desired result (see Appendix \ref{Appendix} for a more precise estimate). 
\end{proof}

\begin{definition}
    For a connected open 
    $I\subset S^1$ and $\varrho>0$, we set 
    \begin{align*}
        \scr{A}_{\mu,\varrho}^{\leqslant 0}(I)\coloneqq 
        \left\{f(z)=\sum_{\mu(\gamma)\leqslant _I0}a_\gamma \exp(\mu(\gamma)/z)\middle| 
        \|f\|_{I,\varrho}<\infty
        \right\}.
    \end{align*}
    The sheaf associated to the presheaf $I\mapsto \scr{A}_{\mu,\varrho}^{\leqslant 0}(I)$ is denoted by $\scr{A}_{\mu,\varrho}^{\leqslant 0}$.
    We then set  
    $\scr{A}_{\mu}^{\leqslant 0}(I)\coloneqq \bigcup_{\varrho>0} \scr{A}_{\mu,\varrho}^{\leqslant 0}(I)$
    and the associated sheaf is denoted by 
    $\scr{A}_\mu^{\leqslant 0}$.
\end{definition}

We also set
\[\scr{A}_\mu=\sum_{\gamma\in L}
\exp(\mu(\gamma)/z)\scr{A}_\mu^{\leqslant 0}.\]
We then obtain a filtration 
$\scr{A}_\mu^{\leqslant c}=\scr{A}_\mu\cap \scr{A}^{\leqslant c}$
and $\scr{A}_\mu^{< c}=\scr{A}_\mu\cap \scr{A}^{ <c}$
for $c\in \C$. 
In the following, we regard $\scr{A}_\mu$ as a sheaf of filtered rings indexed by $\C$ equipped with this filtration.

 \begin{lemma}\label{vanishing A_mu}
     We have $H^1(I,\scr{A}_\mu^{<0})=H^1(I,\scr{A}_\mu^{\leqslant0})=0$
     if $I$ is contained in  
     \[{\mathbb{I}_d}\coloneqq \{e^{\i\theta}\mid |\theta-d|<\pi/2\}\]for some $d\in \R$. 
 \end{lemma}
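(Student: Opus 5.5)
The plan is to reduce the vanishing to an explicit splitting of convergent exponential series over two overlapping arcs, and then pass from two-element covers to arbitrary ones. We may assume $I$ is connected, since $H^1$ of a disjoint union of arcs is the product of the $H^1$'s of the components. Set $H_\gamma=\{e^{\i\theta}\mid \mathrm{Re}(e^{-\i\theta}\mu(\gamma))<0\}$ for $\gamma\neq0$. This is an open half-circle of length exactly $\pi$, and by definition $\mu(\gamma)<_J0$ if and only if $J\subset H_\gamma$.

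\emph{Step 1: uniqueness of coefficients.} First I would show that a series $\sum a_\gamma\exp(\mu(\gamma)/z)$ with $\|f\|_{J,\varrho}<\infty$ that vanishes identically on the sector over an arc $J$ has all $a_\gamma=0$. Since $\mu$ is injective, the exponents $\mu(\gamma)$ are pairwise distinct.
\begin{itemize}
\item Fix a direction $\theta\in J$ and substitute $z=t^{-1}e^{\i\theta}$ with $t\to+\infty$. The series becomes a generalized Dirichlet series in $t$ with distinct complex frequencies.
\item The support property \eqref{support property} together with the estimate \eqref{inequality for A} gives uniform convergence and a well-ordering by real parts, after perturbing $\theta$ inside $J$ to separate frequencies having equal real part.
\item One can then peel off the leading term inductively, so every coefficient must vanish.
\end{itemize}
This identifies sections of the presheaf $\scr{A}^{\leqslant0}_{\mu}$ on connected arcs with coefficient families. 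It also shows that the presheaf already satisfies the gluing axiom on arcs, so no sheafification issue arises for connected $I$.

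\emph{Step 2: the splitting for two arcs.} Let $J_1,J_2\subset I\subset\mathbb{I}_d$ be open arcs with $J_1\cap J_2=J_{12}\neq\emptyset$ connected, with $J_1$ on the left and $J_2$ on the right. Take $h=\sum_{\gamma\in\Gamma_{12}}a_\gamma\exp(\mu(\gamma)/z)$ in $\scr{A}^{\leqslant 0}_{\mu,\varrho}(J_{12})$, where $\Gamma_{12}=\{\gamma\mid \mu(\gamma)\leqslant_{J_{12}}0\}$.

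The key geometric observation concerns $H_\gamma\cap\mathbb{I}_d$ for $\gamma\neq0$. Two open half-circles of length $\pi$ intersect in an arc sharing an endpoint with $\mathbb{I}_d$, unless they coincide. So if $H_\gamma\supset J_{12}$, then $H_\gamma$ contains either all of $\mathbb{I}_d$ to the left of $J_{12}$, hence $J_1$, or all of $\mathbb{I}_d$ to the right of $J_{12}$, hence $J_2$. The case of boundary directions, where equality holds on an endpoint, is handled by openness in the same way.

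Hence $\Gamma_{12}=\Gamma_1\cup\Gamma_2$, where $\Gamma_i=\{\gamma\mid\mu(\gamma)\leqslant_{J_i}0\}$. Put $h_1=\sum_{\Gamma_1}a_\gamma e^{\mu(\gamma)/z}$ and $h_2=-\sum_{\Gamma_2\setminus\Gamma_1}a_\gamma e^{\mu(\gamma)/z}$. Their $\varrho$-norms are bounded by $\|h\|_{J_{12},\varrho}$, so $h_i\in\scr{A}^{\leqslant0}_{\mu,\varrho}(J_i)$ and $h=h_1|_{J_{12}}-h_2|_{J_{12}}$.

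For $\scr{A}_\mu^{<0}$ the same splitting works. Its sections are exactly the series with $a_0=0$ and only strict inequalities $\mu(\gamma)<_J0$: terms on the boundary are merely bounded, not flat. The argument above preserves this class. This gives the Mayer--Vietoris surjectivity $\Gamma(J_1)\oplus\Gamma(J_2)\to\Gamma(J_{12})$, and hence $H^1(J_1\cup J_2,\cdot)=0$ relative to such two-element covers.

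\emph{Step 3: passage to arbitrary covers.} By induction on the number of arcs, Step 2 gives the vanishing of Čech $H^1$ for every finite cover of a subarc of $I$ by arcs. Such covers are cofinal for relatively compact subarcs. For a general open $I$, exhaust it by an increasing sequence of relatively compact arcs $I_n$ and use $0\to\varprojlim^1 H^0(I_n)\to H^1(I)\to\varprojlim H^1(I_n)\to0$.

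I expect this last step to be the main obstacle. The restriction maps $\Gamma(I_{n+1})\to\Gamma(I_n)$ are not surjective: the index set shrinks as the arc grows, and $\varrho$ may deteriorate. To control the $\varprojlim^1$ I would first try to prove the Mittag-Leffler condition directly, using the explicit splitting of Step 2, which is compatible with truncating the index set. Failing that, I would apply Step 2 once to the two arcs given by the left and right halves of $I$ and verify the gluing directly. This works because the index sets $\Gamma_J$ only depend on the endpoints of $J$ through finitely many half-plane conditions, and the $\varrho$ obtained in Step 2 does not change when the arc is enlarged.
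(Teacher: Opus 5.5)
\textbf{Steps 1 and 2 match the paper; Step 3 has a genuine gap that cannot be closed.}

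Up to Step 2 you follow the paper. Its proof consists of three things: the half-circle observation that $\mu(\gamma)<_{I_0}0$ forces $\mu(\gamma)<_{I_1}0$ or $\mu(\gamma)<_{I_2}0$; the resulting surjectivity of $H^0(I_1)\oplus H^0(I_2)\to H^0(I_1\cap I_2)$; and then ``the standard argument in computing \v{C}ech cohomology''. Your Step 1 makes explicit what the paper leaves implicit. On a connected arc $J$, a section is a single coefficient family supported on $\Gamma_J$ with $\sum|a_\gamma|\varrho^{\|\gamma\|}<\infty$ for some $\varrho>0$, and restriction to a subarc is just inclusion of families.

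Neither of your remedies in Step 3 works.
\begin{itemize}
\item Finite covers by arcs are cofinal among covers of a \emph{compact} arc, but not of an open arc, however small. An exhaustion $I_1\subset I_2\subset\cdots$ has no finite subcover, so you do not even get $H^1(I_n)=0$.
\item Mittag-Leffler fails. The image of $H^0(I_m)\to H^0(I_n)$ is the set of families supported on $\Gamma_{I_m}$. When $\mathrm{rank}\,L=2$, the support property \eqref{support property} makes $\mu(L)$ a full lattice in $\C$ whose directions are dense, so $\Gamma_{I_m}$ keeps shrinking strictly. Your claim that $\Gamma_J$ depends on the endpoints through finitely many half-plane conditions holds only for $\mathrm{rank}\,L\le 1$.
\item The two-halves fallback only treats a two-element cover. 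That a single splitting keeps its $\varrho$ is beside the point: the obstruction comes from infinitely many corrections with no common $\varrho$.
\end{itemize}

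\textbf{Why no argument can close it.} For $\mathrm{rank}\,L=2$ the statement fails for open arcs.
\begin{itemize}
\item Take $d=0$ and $I_k=\{e^{\i\theta}\mid |\theta|<\pi/2-1/k\}$. Then $\Gamma_{I_k}\setminus\{0\}=\{\gamma\mid |\arg(-\mu(\gamma))|\le 1/k\}$, whereas $\Gamma_{\mathbb{I}_0}\setminus\{0\}=\{\gamma\mid \mu(\gamma)\in\R_{<0}\}$.
\item Choose distinct $\gamma_k\in\Gamma_{I_k}\setminus\Gamma_{\mathbb{I}_0}$, and put $x_k=e^{\|\gamma_k\|^2}\exp(\mu(\gamma_k)/z)\in H^0(I_k,\scr{A}_\mu^{<0})$.
\item Then $g_{kl}=\sum_{j=k}^{l-1}x_j$ for $k<l$ is a \v{C}ech $1$-cocycle on the cover $\mathcal{U}=\{I_k\}$ of $\mathbb{I}_0$.
\item Suppose $g_{kl}=f_l-f_k$ with $f_k\in H^0(I_k,\scr{A}_\mu^{\leqslant 0})$. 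Fix $j$ and take $M>j$ large enough that $\gamma_j\notin\Gamma_{I_M}$. Comparing $\gamma_j$-coefficients in $\sum_{m<M}x_m=f_M-f_1$ is legitimate by your Step 1. It shows that the $\gamma_j$-coefficient of $f_1$ is $-e^{\|\gamma_j\|^2}$ for every $j$.
\item This contradicts $\|f_1\|_{I_1,\varrho}<\infty$ for any $\varrho>0$.
\end{itemize}
Since $\check{H}^1(\mathcal{U},\cdot)\to H^1(\mathbb{I}_0,\cdot)$ is injective, both $H^1(\mathbb{I}_0,\scr{A}_\mu^{<0})$ and $H^1(\mathbb{I}_0,\scr{A}_\mu^{\leqslant 0})$ are nonzero. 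The same construction works on any open arc.

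\textbf{What is actually true.} Steps 1--2 prove vanishing for finite covers by arcs, i.e.\ $H^1(K,\cdot)=0$ for compact arcs $K\subset\mathbb{I}_d$. This is also all the paper's two-arc argument establishes: its ``standard argument'' passes over exactly this non-compactness. The statement as written does hold when $\mathrm{rank}\,L\le 1$, where $\Gamma_J$ takes only finitely many values. The finite version is the right target. It is also what the gluing of local splittings in Theorem~\ref{local to global split} really needs, since $\overline{\mathbb{I}_d}$ is covered by finitely many arcs carrying splittings.
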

 \begin{proof}
By the Leray-Grothendieck theorem, we can use \v{C}ech cohomology.
     Consider a covering
     $I=I_1\cup I_2$ of an open interval 
     $I$ contained in ${\mathbb{I}_d}$ for some $d \in \R$
     by open intervals $I_j$ ($j=1,2$). 
     Set $I_0=I_1\cap I_2$. 
     If $\mu(\gamma)<_{I_0}0$,
     then either $\mu(\gamma)<_{I_1}0$
     or $\mu(\gamma)<_{I_2}0$ holds. 
     It follows that the map 
     \begin{align*}
         H^0(I_1,\scr{A}_\mu^{< 0})\oplus 
         H^0(I_2,\scr{A}_\mu^{<0})\longrightarrow H^0(I_0,\scr{A}_\mu^{<0}),\quad 
         (f_1,f_2)\mapsto f_{2|I_0}-f_{1|I_0}
     \end{align*}
     is surjective. 
     This implies the desired result 
     by the standard argument 
     in computing \v{C}ech cohomology. 
 \end{proof}

\subsection{De Rham category}

Let $(E^0,\nabla^0)$
be an elementary exponential meromorphic connection. 
Let $\mathscr{E}nd^{<0}_\mu(E^0)$ denote the 
subsheaf of $\End(E^0)\otimes \scr{A}$
defined as follows: 
\begin{align*}
    \scr{E}nd_\mu^{<0}(E^0)\coloneqq \sum_{c+c'<0}
    \scr{A}_\mu ^{\leqslant c}\scr{H}^0\DR_{\leqslant c'}(\End(E^0)).
\end{align*}
The inequality $c+c'<0$ means that the stalk of $\scr{E}nd_\mu^{<0}(E^0)$ at a point $e^{\i\theta}$ is defined as the sum over $c,c'$ satisfying $c+c'<_\theta 0$.
We then set 
\[\aut^{<0}_\mu(E^0)\coloneq \id_{E^0}+\scr{E}nd_\mu^{<0}(E^0),\]
which is a sheaf of groups. 

\begin{definition}\label{dRE}
    We define the category ${\mathsf{dR}}_{C,\mu}$
    as follows: 
    \begin{itemize}
        \item An object in $\mathsf{dR}_{C,\mu}$
    is a tuple $E=((E,\mathcal{I},\Xi),(E^0,\nabla^0))$
    consisting of 
    \begin{itemize}
    \item an elementary exponential connection 
    $(E^0,\nabla^0)$ whose exponential factors are contained in $C$.
        \item a finite dimensional ${\C\conv{z}}$-vector space
    $E$,
    \item  a  cyclic covering
    $\mathcal{I}=(I_k)_{k\in\Lambda_K}$
    of $S^1$, and 
    \item a family $\Xi=(\Xi_k)_{k\in\Lambda_K}$ of isomorphisms
    \[\Xi_k\colon E\otimes\scr{A}_{|I_k}\longrightarrow
    E^0\otimes \scr{A}_{|I_k}\]
    such that  
    \begin{align}\label{aut}
    \Xi_k\circ \Xi_{k+1}^{-1}
    \in H^0(I_{k,k+1},\aut_\mu^{<0}(E^0))
    \end{align}
    for any $k\in \Lambda_K$. 
    \end{itemize}       
        \item  The set of morphisms from $E=(E,\mathcal{I},\Xi)$
    to $F=(F,\cal{I}',\Theta)$ denoted by 
    \[\Hom_{\dR}(E,F)\]
    is the set of pairs
    $f=(f,f_0)$
    of 
    \begin{itemize}
        \item a ${\C\conv{z}}$-linear map 
        $f\colon E\to F$, and 
        \item a flat morphism 
        $f^0\colon (E^0,\nabla^{E,0})\to (F^0,\nabla^{F,0})$ with the following property:
        There exists a 
        mutual refinement  $(\cal{I}''=(I''_p)_{p\in\Lambda_P},\mathsf{u},\mathsf{v})$
        of $\mathcal{I}$ and $\cal{I}'$
        such that \begin{align}\label{mor}
        \Theta_{\mathsf{v}(p)}\circ f_{}\circ \Xi_{\mathsf{u}(p)}^{-1}
        =f^0_{}
        \end{align}
        for any $p\in\Lambda_P$,
        where we set $\Theta_{\mathsf{v}(p)}=\Theta_{\mathsf{v}(p)|I''_p}$
        and $\Xi_{\mathsf{u}(p)}=\Xi_{\mathsf{u}(p)|I''_p}$. 
    \end{itemize}
    \item For two morphisms $f\in \Hom_{\dR}(E,F)$ and $g\in \Hom_\dR(F,G)$, the composition $g \circ f$ is defined by the pair $(g\circ f, g^0\circ f^0)$, which defines an element in $\Hom_\dR(E,G)$.
    \end{itemize}
\end{definition}
\begin{remark}
    By condition \eqref{aut},
    the asymptotic expansions of $\Xi_k$
    coincide:
    $\widehat{\Xi}_k=\widehat{\Xi}_{k+1}$
    for all $k\in\Lambda_K$.
    We call $\widehat{\Xi}$
    the \textit{underlying formal isomorphism} of $E$.
    We also call the connection \[\widehat{\nabla}\coloneqq \widehat{\Xi}^{-1}\circ \widehat{\nabla}^0\circ \widehat{\Xi}\]
    \textit{the underlying formal connection} of $E$. 
    If $(\cal{I}',\mathsf{u})$
    is a cyclic refinement of $\mathcal{I}$,
    then $(E,\cal{I}',\Xi_{\cal{I}'})$
    with $\Xi_{\cal{I}'}=(\Xi_{\mathsf{u}(\ell)|I'_\ell})_{\ell\in {\Lambda}_L}$
    is isomorphic to $(E,\mathcal{I},\Xi)$. 
\end{remark}

\begin{definition}
    For an object \[((E,\mathcal{I},\Xi),(E^0,\nabla^0))\in\mathsf{dR}_{C,\mu}\]
    with notations in Definition \ref{dRE} and $k\in \Lambda_K$,
    we set 
    \[\nabla^k\coloneqq \Xi_k^{-1}\nabla^0\Xi_k.\] 
\end{definition}

\begin{remark}\label{dRgeneralize}
    When $L=0$ and $\mu=0$,
    we have \[\Xi_k\circ\Xi_{k+1}^{-1}\in H^0(I_{k,k+1},\aut^{<0}(\End(E^0)))\] 
    where $\aut^{<0}(E^0)=\id+\DR_{<0}(\End(E^0))$. 
    Then, we have $\nabla^k=\nabla^{k+1}$.
    Hence we obtain a meromorphic connection 
    $\nabla$ of 
    exponential type on $E$.
    Moreover, in this case, the category $\dR_{C,\mu}$
    is equivalent to the category of meromorphic connections of exponential type whose exponential 
    factor is contained in $C$. 
    In this sense, the category $\mathsf{dR}_{C,\mu}$ generalizes the category of meromorphic connections of unramified exponential type.
\end{remark}

\begin{theorem}
    The category $\dR_{C,\mu}$
    is an abelian category. \qed
\end{theorem}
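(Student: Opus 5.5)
We will prove that $\dR_{C,\mu}$ is abelian by exhibiting, for each object, an additive structure and then constructing kernels and cokernels explicitly, using the formal and sectorial data separately. The guiding idea is that the whole structure is determined by the pair consisting of the $\C\conv{z}$-vector space $E$ and the elementary exponential connection $(E^0,\nabla^0)$, glued by the sectorial isomorphisms $\Xi_k$. A morphism $(f,f^0)$ must therefore be compatible with both, and kernels and cokernels should be computed componentwise.

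First we establish additivity. The set $\Hom_\dR(E,F)$ is a $\C$-vector space: given two morphisms defined with respect to mutual refinements, we pass to a common refinement and use the refinement remark, so that \eqref{mor} holds for the sum. Direct sums are formed by taking $E\oplus F$ and $E^0\oplus F^0$, and by taking $\Xi_k\oplus\Theta_k$ on a common cyclic refinement. The condition \eqref{aut} is preserved because $\aut^{<0}_\mu(E^0\oplus F^0)$ contains block-diagonal elements with entries in $\aut^{<0}_\mu(E^0)$ and $\aut^{<0}_\mu(F^0)$.

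Next, let $(f,f^0)\colon E\to F$ be a morphism. After refinement we may take $\mathcal{I}=\cal{I}'$ and $\Theta_k f=f^0\Xi_k$. The kernel $\Ker f^0$ is a flat subconnection of $E^0$. Since morphisms between elementary exponential connections respect the decomposition by the exponential factor $c$, and within a fixed $c$ are flat morphisms of regular singular pieces, $\Ker f^0$ is again elementary exponential with exponential factors in $C$. On each $I_k$, the map $\Xi_k$ identifies $\Ker(f)\otimes\scr{A}$ with $\Ker(f^0)\otimes\scr{A}$. Here we use that $\scr{A}$ is flat over $\C\conv{z}$ (a field), so that $\Ker(f\otimes\id)=\Ker(f)\otimes\scr{A}$. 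We then set $\Xi'_k=\Xi_{k|\Ker f}$. Dually, we define $\Cok f$ and $\Cok f^0$ with the induced isomorphisms.

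The key verification is that the restricted transition maps $\Xi'_k\Xi'^{-1}_{k+1}$ lie in $\aut^{<0}_\mu(\Ker f^0)$, and similarly for the cokernel. We expect this to be the main obstacle. The plan is as follows. Choose a splitting $E^0=\Ker f^0\oplus W$ by flat morphisms; this is possible because, after fixing $c$, each piece is a connection $d-(c-zA_c)dz/z^2$, and a flat subobject corresponds to an $A_c$-stable subspace, so complements exist after passing to the semisimple part. If they do not exist, we instead work with the block-triangular form. Once a splitting is fixed, $\scr{H}^0\DR_{\leqslant c'}(\End(E^0))$ decomposes into blocks compatibly with $\hom$ of the pieces. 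The restriction of $g=\Xi_k\Xi_{k+1}^{-1}$ to $\Ker f^0$ is then the corresponding diagonal block, because $g$ preserves $\Ker f^0\otimes\scr{A}$. That $g$ preserves this subspace follows from $f^0 g=f^0$, which in turn follows from \eqref{mor} on both $I_k$ and $I_{k+1}$. Projecting the sum $\sum_{c+c'<0}\scr{A}_\mu^{\leqslant c}\scr{H}^0\DR_{\leqslant c'}$ onto a block stays in the same type of sum, which gives membership in $\aut^{<0}_\mu$. The cokernel is handled by the quotient block in the same way.

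Finally, we check the abelian axioms. The universal properties of kernel and cokernel follow from those in $\C\conv{z}$-vector spaces and in flat connections, together with the observation that factorizations automatically satisfy \eqref{mor} on a refinement. For the coimage-to-image map, note that it is an isomorphism on both components $E$ and $E^0$. Its inverse $(f^{-1},(f^0)^{-1})$ again satisfies \eqref{mor}. Hence it is an isomorphism in $\dR_{C,\mu}$, and the category is abelian.
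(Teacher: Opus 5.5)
Your overall architecture is right. A morphism is determined by either of its components. Kernels and cokernels can be formed componentwise on a common refinement, using $\Xi_k(\Ker f\otimes\scr{A})=\Ker f^0\otimes\scr{A}$. The objects $\Ker f^0$ and $\Cok f^0$ are again elementary exponential, because flat morphisms respect the decomposition by exponential factors. The coimage-to-image map is invertible because both of its components are.

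\textbf{The gap.} The one substantive step is condition \eqref{aut} for the kernel and cokernel, and the argument you give for it does not work.
\begin{enumerate}
\item A flat complement $E^0=\Ker f^0\oplus W$ need not exist, since regular singular connections do not form a semisimple category. Take $\nabla^0=d-(c-zN)\,dz/z^2$ on $\C\conv{z}^2$ with $N$ a nilpotent Jordan block. Projection to the second coordinate is a flat morphism onto $(\C\conv{z},d-c\,dz/z^2)$. Its kernel $\C\conv{z}e_1$ has no flat complement, because the monodromy $e^{-2\pi\i N}$ is not semisimple. ``Passing to the semisimple part'' changes the object, so it cannot help.
\item The block-triangular fallback, with a non-flat complement, does not close the gap. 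Write $g-\id=\sum_i a_iv_i$ with $a_i\in\scr{A}_\mu$ and $v_i$ horizontal endomorphisms of $E^0$. If some $v_i$ does not itself preserve $\Ker f^0$, its $(1,1)$-block is not horizontal. So you cannot conclude that $g_{|\Ker f^0}-\id$ lies in $\sum_{c+c'<0}\scr{A}_\mu^{\leqslant c}\scr{H}^0\DR_{\leqslant c'}(\End(\Ker f^0))$.
\item The identity $f^0g=f^0$ is false in general (take $f=\id$). What \eqref{mor} gives is $f^0\circ\Xi_k\Xi_{k+1}^{-1}=\Theta_k\Theta_{k+1}^{-1}\circ f^0$. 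This is still enough to see that $\Xi_k\Xi_{k+1}^{-1}$ preserves $\Ker f^0\otimes\scr{A}$ and that $\Theta_k\Theta_{k+1}^{-1}$ preserves $\mathrm{Im}f^0\otimes\scr{A}$.
\end{enumerate}

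\textbf{The repair.} Split only sectorially and only at the level of horizontal sections. That is all \eqref{aut} sees, since it is a condition on germs.

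Write $K^0=\Ker f^0=\bigoplus_c K^0_c$. On a small arc $J$ the local system $\scr{H}^0\widetilde{\DR}(E^0_c)_{|J}$ is constant, so $\scr{H}^0\widetilde{\DR}(K^0_c)_{|J}$ has a complement there. Summing over $c$ gives a projector $\pi_J$, which is a section of $\scr{H}^0\DR_{\leqslant 0}(\hom(E^0,K^0))$ over $J$. It does not mix exponential factors, so the factors $e^{-c/z}$ cancel and its entries have moderate growth.

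Let $\iota\colon K^0\to E^0$ be the inclusion. Then $g_{|K^0}-\id=\pi_J\circ(g-\id)\circ\iota$. Moreover $\pi_J\circ\scr{H}^0\DR_{\leqslant c'}(\End E^0)\circ\iota\subset\scr{H}^0\DR_{\leqslant c'}(\End K^0)$. Hence $g_{|K^0}\in\aut^{<0}_\mu(K^0)$ on $J$, and these arcs cover $I_{k,k+1}$.

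For the cokernel, take a sectorial horizontal section $s_J$ of the quotient map $q\colon F^0\to F^0/\mathrm{Im}f^0$. Then use the identity $\bar g_F-\id=q\circ(g_F-\id)\circ s_J$ for $g_F=\Theta_k\Theta_{k+1}^{-1}$.

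For comparison, the paper states this theorem without proof. It can also be deduced, once Theorem \ref{main theorem} is established, by transporting the abelian structure of $\Be_{C,\mu}$ along the equivalence $\DR^\mu$.
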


\subsection{Betti category}
    Let $\scr{L}=(\scr{L},\scr{L}_\leqslant)$
    be a filtered module over the
    sheaf of filtered rings $\scr{A}_\mu$
    indexed by $\C$.
    In other words,
    $\scr{L}$ is an $\scr{A}_\mu$-module
    and
    \[\scr{L}_{\leqslant }=\{\scr{L}_{\leqslant c}\}_{c\in \C}\]
    is a family of $\scr{A}_\mu^{\leqslant 0}$-submodules of $\scr{L}$
    indexed by $c\in \C$
    satisfying the following properties:
    \begin{itemize}
        \item[(a)] If $c\leqslant_\theta c'$,
        then $(\scr{L}_{\leqslant c})_{e^{\i\theta}}\subset (\scr{L}_{\leqslant c'})_{e^{\i\theta}}$.
        \item[(b)] For any $\gamma\in L$ and
        $c\in \C$,
        we have \[u^\gamma \scr{L}_{\leqslant c}=\scr{L}_{\leqslant c+\mu(\gamma)},\]
        where we set $u^\gamma=\exp(\mu(\gamma)/z)$.
    \end{itemize}
    For any $c\in\C$, we define the
    subsheaf $\scr{L}_{<c}\subset \scr{L}_{\leqslant c}$
    by the condition on stalks:
    \begin{align*}
        (\scr{L}_{<c})_{e^{\i\theta}}
        =\sum_{c'<_\theta c}(\scr{L}_{\leqslant c'})_{e^{\i\theta}}.
    \end{align*}
    We set $\gr_c\scr{L}\coloneqq \scr{L}_{\leqslant c}/\scr{L}_{<c}$ for any $c\in \C$,
    and set $\gr\scr{L}\coloneqq \bigoplus_{c\in \C}\gr_c\scr{L}$.
    By condition (b) above, the module
    $\gr\scr{L}$
    is naturally equipped with a $\C[L]$-module structure
    induced by the action of $u^\gamma=\exp(\mu(\gamma)/z)$ $(\gamma \in L)$.
\begin{definition}\label{DefBe}
    Let $\mathsf{Be}_{C,\mu}$
    be a category defined as follows:
    \begin{itemize}
        \item an object in $\Be_{C,\mu}$
        is a filtered $\scr{A}_\mu$-module
        $\scr{L}=(\scr{L},\scr{L}_{\leqslant})$ indexed by $\C$ 
        with the following properties: 
        \begin{itemize}
            \item[(a)] For each $c\in\C$, the sheaf $\gr_c\scr{L}$ is a local system of 
                  $\C$-vector spaces. 
            \item[(b)] The set $\{c\in \C\mid \gr_c\scr{L}\neq 0\}$ is contained in 
            $-C+\mu(L)$. 
            \item[(c)] For each point $e^{\i\theta}\in S^1$, there exists an open neighborhood $I$ of $e^{\i\theta}$ and a filtered isomorphism
            \begin{align}\label{splitting}
                \eta_I\colon {\scr{A}_{\mu}}_{|I}\otimes_{\C[L]}\gr\scr{L}_{|I}
                \simeqto\scr{L}_{|I}
            \end{align}
            satisfying $\gr(\eta_I)=\id$. 
        \end{itemize}
        \item a morphism between two objects
        is a morphism of $\scr{A}_\mu$-modules
        which preserves the filtration. 
    \end{itemize}
    An object $\scr{G}\in\mathsf{Be}_{C,\mu}$ is called \textit{graded} if it is isomorphic 
    to $\scr{A}_\mu\otimes_{\C[L]}\gr\scr{G}$
    as filtered $\scr{A}_\mu$-modules. 
\end{definition}

\begin{remark}\label{Begeneral}
    When $L=0$, 
    $\mathsf{Be}_{C,\mu}$
    is the category of 
    Stokes structures of exponential 
    meromorphic connections whose exponential factors are contained in $C$. 
    On the other hand,
    when $L=\Z$ and $\mu(n)=2\pi\i n$,
    this category is the same 
    as the category of Stokes structures 
    for unramified mild difference modules
    \cite{shamoto2022stokes}
    whose exponential factors are contained in $\{cs\mid c\in C+\mu(L)\}$
    if we set $s=1/z$.  
\end{remark}

\begin{theorem}\label{local to global split}
    Let $(\scr{L},\scr{L}_\leqslant)$
    be an object in $\Be_{C,\mu}$.
    Then for any $(C,\mu)$-generic direction 
    $d\in \R$,
    there exists a 
    unique isomorphism
    \begin{align*}
        \eta_{{\mathbb{I}_d}}\colon (\scr{A}_\mu\otimes\gr\scr{L})_{|{\mathbb{I}_d}}\simeqto \scr{L}_{|{\mathbb{I}_d}}
    \end{align*}
    such that $\gr(\eta_{{\mathbb{I}_d}})=\id$. 
\end{theorem}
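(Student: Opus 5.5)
The plan is the standard argument for Stokes-filtered local systems, in which one proves that splittings exist and are unique on arcs of length $\pi$ in a generic direction; here the sheaf of rings $\scr{A}_\mu$ and Lemma \ref{vanishing A_mu} replace the constant sheaf and the classical vanishing results.

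\textbf{Step 1: reduce to automorphisms.} Set $\scr{G}=\scr{A}_\mu\otimes_{\C[L]}\gr\scr{L}$. By Definition \ref{DefBe}(c), ${\mathbb{I}_d}$ is covered by finitely many arcs $J_i$ carrying local splittings $\eta_{J_i}$ with $\gr(\eta_{J_i})=\id$. On each overlap, $\eta_{J_i}^{-1}\eta_{J_j}$ is a filtered automorphism of $\scr{G}$ inducing the identity on $\gr$. Let $\scr{U}\subset\mathscr{A}ut(\scr{G})$ be the sheaf of such automorphisms, and write them as $\id+N$ with $N$ in the sheaf $\scr{N}$ of filtered endomorphisms with $\gr N=0$. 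Then existence of $\eta_{{\mathbb{I}_d}}$ amounts to triviality of the class in $H^1({\mathbb{I}_d},\scr{U})$, and uniqueness amounts to $H^0({\mathbb{I}_d},\scr{U})=\{\id\}$.

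\textbf{Step 2: describe $\scr{N}$ explicitly.} Locally $\gr\scr{L}$ is free over $\C[L]$ on generators $e_c$ of degrees $c\in -C$. This holds because by (b) every jump lies in $-C+\mu(L)$, and distinct elements of $C$ lie in distinct $L$-orbits. So $\gr_{-c}\scr{L}$ is a local system $V_c$, and $\gr\scr{L}\simeq\bigoplus_c\C[L]\otimes V_c$. A filtered endomorphism with $\gr=0$ is then a matrix whose $(c,c')$ block is a section of $\Hom(V_{c'},V_c)\otimes\scr{A}_\mu^{<c'-c}$ (with the sign convention fixed by $\scr{L}_{\leqslant}$). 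This gives
\[
\scr{N}\simeq\bigoplus_{c,c'\in C}\mathscr{H}om(V_{c'},V_c)\otimes\scr{A}_\mu^{<c'-c}.
\]
Here $\scr{A}_\mu^{<a}=\sum_\gamma u^\gamma\scr{A}_\mu^{\leqslant0}\cap\scr{A}^{<a}$ is locally, for $c\ne c'$, a sum of the modules $u^\gamma\scr{A}_\mu^{\leqslant 0}$ over $\gamma$ with $\mu(\gamma)<_\theta c'-c$. For $c=c'$ it is $\scr{A}_\mu^{<0}$.

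\textbf{Step 3: uniqueness.} Take $N\in H^0({\mathbb{I}_d},\scr{N})$ and look at its $\exp(\omega/z)$-coefficients, $\omega\in\Omega_{C,\mu}\cup\{0\}$. These coefficients are well defined by uniqueness of the expansions, which follows from injectivity of $\mu$. A nonzero coefficient at $\omega$ would require $\omega<_\theta0$ for all $e^{\i\theta}\in{\mathbb{I}_d}$, i.e.\ $\mathrm{Re}(e^{-\i\theta}\omega)<0$ on the whole open half-circle. This forces $\omega\in\R_{<0}e^{\i d}$, which is excluded by genericity of $d$, or $\omega=0$. The coefficient at $\omega=0$ is excluded because it is not in $\scr{A}^{<0}$. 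Hence $N=0$.

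\textbf{Step 4: existence.} $\scr{U}$ is unipotent. Filter $\scr{N}$ by ordering the indices $\omega$ according to $<_\theta$ along the arc. The commutator of this filtration is strictly smaller, and the graded pieces are abelian sheaves of the form $u^\gamma\scr{A}_\mu^{\leqslant0}$ or $\scr{A}_\mu^{<0}$ twisted by local systems, which are trivial on the contractible arc. By the standard dévissage, $H^1({\mathbb{I}_d},\scr{U})$ is trivial once $H^1$ of each graded piece vanishes, and this is Lemma \ref{vanishing A_mu} applied on arcs inside ${\mathbb{I}_d}$.

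The main obstacle is making this dévissage rigorous, because the set of indices $\mu(\gamma)$ is infinite and the sums defining $\scr{A}_\mu^{<a}$ are infinite. I would avoid the dévissage and instead adapt the Čech argument of Lemma \ref{vanishing A_mu} directly to $\scr{U}$. For a two-arc cover $I_1\cup I_2$ with $I_0=I_1\cap I_2$, I would factor any $g\in\scr{U}(I_0)$ as $g_2g_1^{-1}$. To do this, split each series $\sum a_\gamma e^{\mu(\gamma)/z}$ termwise according to whether the exponent is $<_{I_1}$ or $<_{I_2}$. This is possible since both arcs lie in ${\mathbb{I}_d}$, and convergence is preserved because $\|\cdot\|_{I,\varrho}$ norms are monotone under taking sub-sums. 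For the multiplicative group, I would order $C$ along the direction $d$, which is a total order by genericity, and solve block by block in a triangular recursion. Finiteness of $C$ makes the recursion terminate.
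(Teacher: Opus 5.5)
Your uniqueness argument in Step 3 is sound and matches the paper: it needs $\Omega_{C,\mu}=-\Omega_{C,\mu}$ to exclude the negative ray, and the paper likewise deduces uniqueness from $H^0(\mathbb{I}_d,(\scr{A}_\mu\otimes\gr\scr{L})_{<c})=0$. The additive termwise splitting in Step 4 is also fine; it is the argument of Lemma \ref{vanishing A_mu}.

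\textbf{The gap: multiplicative factorization.} The problem is the passage from that additive splitting to the factorization $g=g_2g_1^{-1}$ in $\scr{U}(I_0)$. Your triangular recursion over $C$ assumes that the gluing automorphisms are block-unipotent for some order on $C$, as in the classical case $L=0$. This is false here.
\begin{itemize}
\item The $(c,c')$ block of $N\in\scr{N}$ takes values in $\scr{A}_\mu^{<c'-c}$. As soon as $L\neq 0$, this sheaf contains $e^{\mu(\gamma)/z}$ for infinitely many $\gamma$ near all but finitely many directions, and this holds for \emph{every} pair $(c,c')$.
\item Consequently both the $(c,c')$ and $(c',c)$ blocks are nonzero sheaves, and the diagonal blocks lie in $\id+\End(V_c)\otimes\scr{A}_\mu^{<0}$, which is not unipotent.
\item Already for $C=\{c\}$ your recursion has one step, and that step is the entire problem: factor $g\in\id+\End(V_c)\otimes\scr{A}_\mu^{<0}(I_0)$.
\item Solving $\id+N_2=(\id+N)(\id+N_1)$ by splitting $N$ produces the correction term $NN_1$. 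Its exponents must again be redistributed between $I_1$ and $I_2$, and this repeats indefinitely, so finiteness of $C$ does not make anything terminate.
\end{itemize}
The same objection applies to the d\'evissage you sketch first. The exponent filtration has infinitely many steps, and $\scr{N}$ is not complete for it. A formal limit of successive approximations therefore need not be a convergent series in $\scr{A}_\mu$.

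\textbf{What is missing: a convergence estimate.} You need a quantitative convergence argument for this infinite iteration.

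The paper phrases existence additively: a splitting $\gr_c\scr{L}\to\scr{L}_{\leqslant c}$ exists once $H^1(I,\scr{L}_{<c})=0$. It reduces this to the surjectivity of the twisted map \eqref{I0}, with $\varphi=\eta_{I_1|I_0}^{-1}\circ\eta_{I_2|I_0}$. In the appendix, \eqref{I0} is solved by the Neumann series $h=\sum_{n\geq 0}\Phi^n(g)$, where $\Phi(g)=(\id-\varphi)(s_2(g)_{|I_0})$ and $s_1,s_2$ are exactly your termwise splitting. Convergence comes from Fr\'echet norms weighted by $\exp(\mathsf{a}_J/|z|)$: each application of a section of $\scr{E}nd^{<0}$ gains a uniform factor $e^{-\mathsf{a}_J/|z|}$.

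Monotonicity of $\|\cdot\|_{I,\varrho}$ under sub-sums does not replace this. For $\varrho<1$ these coefficient norms need not be submultiplicative, so they give no control over iterated products. To complete your route you would need such a contraction estimate, for instance by shrinking $\varrho$ (equivalently, the radius of the sector) and using the support property to make $N$ small. Once you have it, the triangular bookkeeping over $C$ is unnecessary.
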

\begin{proof} 
    Take a point 
    $c\in  C(\scr{L})\coloneqq \{c'\in\C\mid \gr_{c'}\scr{L}\neq 0\}$. 
    Since we have a short exact sequence 
    \begin{align*}
        0\longrightarrow\scr{L}_{<c}
        \longrightarrow \scr{L}_{\leqslant c}
        \longrightarrow \gr_c\scr{L}\longrightarrow 0,
    \end{align*}
 for any open interval $I\subset S^1$, we have a long exact sequence
    \begin{align*}
        0\longrightarrow H^0(I,\scr{L}_{<c})\longrightarrow H^0(I,\scr{L}_{\leqslant c})
        \longrightarrow H^0(I,\gr_c\scr{L})
        \longrightarrow H^1(I,\scr{L}_{<c})\longrightarrow \cdots.
    \end{align*}
Then the splitting 
$\gr_c\scr{L}\to \scr{L}_{\leqslant c}$
exists if and only if 
the map \[
\delta_{I,c}:
H^0(I,\gr_c\scr{L})\to H^1(I,\scr{L}_{<c})\]
is the zero map.
This map $\delta_{I,c}$ is given as follows:
Take $v\in H^0(I,\gr_c\scr{L})$.
By definition, there exists 
an open covering 
$I=\bigcup_{\alpha=1}^NI_\alpha$ 
and sections $\widetilde{v}_\alpha$ in 
$H^0(I_\alpha,\scr{L}_{\leqslant c})$
such that 
\begin{itemize}
    \item we have $I_\alpha\cap I_\beta=\emptyset $
    if $|\alpha-\beta|> 1$, and 
    \item we have $\widetilde{v}_\alpha
\equiv v_{|I_\alpha}\mod H^0(I_\alpha,\scr{L}_{<c})$
for all $\alpha=1,\dots,N$.
\end{itemize}
Then we have 
\[\delta_{I,c}(v)=\left[\{u_\alpha\coloneqq \widetilde{v}_{\alpha|I_{\alpha}\cap I_{\alpha+1}}-\widetilde{v}_{\alpha+1|I_\alpha\cap I_{\alpha+1}}\}_{\alpha=1}^{N-1}\right].\]  

Let $I_1,I_2$ be open intervals such that 
$\delta_{I_1,c}=\delta_{I_2,c}=0$ for all $c\in \C$.
Assume that $I=I_1\cup I_2$
is contained in ${\mathbb{I}_d}$ for some $d\in\R$.
Then, we shall show that $H^1(I,\scr{L}_{<c})=0$. 
By assumption, we have $H^1(I_j,\scr{L}_{<c})=0$
by Lemma \ref{vanishing A_mu}. 
Again by assumption,
we have $\eta_{I_1|I_0}\colon H^0(I_0,(\scr{A}_\mu\otimes\gr\scr{L})_{<c})\simeqto H^0(I_0,\scr{L}_{<c})$
and the cohomology $H^1(I_0,\scr{L}_{<c})$
is identified with the cokernel of the
map 
\begin{align}\begin{split}\label{I0}
    H^0(I_1,(\scr{A}_\mu\otimes\gr \scr{L})_{<c})\oplus H^0(I_2,(\scr{A}_\mu\otimes\gr \scr{L})_{<c})&\longrightarrow H^0(I_0,
    (\scr{A}_\mu\otimes\gr \scr{L})_{<c}),\\
    (v,w)&\mapsto 
    \varphi(w_{|I_0})-v_{|I_0},    
\end{split}
\end{align}
where we put
$\varphi= 
\eta_{I_1|I_0}^{-1}\circ \eta_{I_2|I_0}$.

We claim that the map \eqref{I0} is surjective, which implies that $H^1(I,\scr{L}_{<c})=0$. 
The proof of this claim will be given 
in Appendix \ref{Appendix}. 
As a consequence, we obtain that for any direction $d$,
we have $H^1({\mathbb{I}_d},\scr{L}_{<c})=0$.

Lastly, we note that for any $(C,\mu)$-generic 
direction $d$, 
we have 
\[H^0({\mathbb{I}_d},\scr{L}_{<c})
\simeq H^0({\mathbb{I}_d},(\scr{A}_\mu\otimes \gr\scr{L})_{<c})=0\]
for any $c\in C(\scr{L})$. 
This implies the uniqueness of 
the splitting. 
\end{proof}

\begin{theorem}
    The category $\Be_{C,\mu}$
    is an abelian category. 
\end{theorem}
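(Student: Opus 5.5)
Our plan is to show that kernels and cokernels, formed in filtered $\scr{A}_\mu$-modules, remain in $\Be_{C,\mu}$, and that every morphism is strict, so the coimage--image map is an isomorphism. Additivity is clear: direct sums of objects are objects, since the conditions (a)--(c) of Definition~\ref{DefBe} are stable under finite direct sums. The main tool is Theorem~\ref{local to global split}. For a $(C,\mu)$-generic direction $d$ it gives canonical splittings $\eta_{\mathbb{I}_d}$ of any object over ${\mathbb{I}_d}$. Since these splittings are unique with $\gr=\id$, a morphism $\varphi\colon \scr{L}\to\scr{L}'$ should be transported to a morphism of graded objects over ${\mathbb{I}_d}$. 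So we reduce to studying morphisms between graded objects $\scr{A}_\mu\otimes_{\C[L]}\gr\scr{L}$.

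\textbf{Step 1 (morphisms are compatible with the splittings).} Fix a generic $d$ and consider $\psi=\eta'^{-1}_{\mathbb{I}_d}\circ\varphi\circ\eta_{\mathbb{I}_d}$. Write $\psi=\gr\varphi+\nu$, where $\gr\varphi$ is extended $\scr{A}_\mu$-linearly. The remainder $\nu$ strictly lowers the filtration. It is therefore a section over ${\mathbb{I}_d}$ of $\bigoplus_{c,c'}\homscr(\gr_c,\gr_{c'})\otimes\scr{A}_\mu^{<c'-c}$. Since $\gr_c\scr{L}$ and $\gr_{c'}\scr{L}'$ are local systems, hence constant on ${\mathbb{I}_d}$, this space is governed by $H^0({\mathbb{I}_d},\scr{A}_\mu^{<c'-c})$. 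Exactly as in the last paragraph of the proof of Theorem~\ref{local to global split}, genericity of $d$ forces $H^0({\mathbb{I}_d},\scr{A}_\mu^{<b})=0$ for the relevant $b\in\Omega_{C,\mu}\cup\{0\}$. This uses that a series in $\scr{A}_\mu^{<b}$ on a half-circle centered at a direction avoiding $\Omega_{C,\mu}$ must vanish. Hence $\nu=0$, and on ${\mathbb{I}_d}$ the morphism $\varphi$ is graded, $\varphi=\id\otimes\gr\varphi$.

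\textbf{Step 2 (graded case).} For graded objects, $\gr\varphi$ is a morphism of local systems of $\C[L]$-graded vector spaces. Its kernel and cokernel are again local systems, concentrated in degrees in $-C+\mu(L)$ and $L$-equivariant. The sheaf $\scr{A}_\mu$ should be flat over $\C[L]$ in the graded sense. Indeed, locally $\scr{A}_\mu\otimes_{\C[L]}\gr\scr{L}\simeq\bigoplus_{c\in C}\scr{A}_\mu\otimes_\C\gr_{-c}\scr{L}$, because $C$ represents distinct $L$-orbits; this makes the tensor product a free $\scr{A}_\mu$-module in each orbit. Consequently $\ker(\id\otimes\gr\varphi)=\scr{A}_\mu\otimes\ker\gr\varphi$ with the induced filtration, and similarly for cokernels. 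The induced filtration on this kernel is the tensor filtration, and the quotient filtration on the cokernel coincides with the image filtration. This gives strictness, and shows that the kernel and cokernel again satisfy (a)--(c) over ${\mathbb{I}_d}$.

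\textbf{Step 3 (globalization).} Because non-generic directions are countable, finitely many sectors ${\mathbb{I}_d}$ with $d$ generic cover $S^1$. Conditions (a)--(c) and the strictness of $\varphi$ are local, so they follow from Step 2. The kernel and cokernel sheaves are defined globally, with global filtrations, and the identification $\mathrm{coim}\simeq\mathrm{im}$ is a morphism of sheaves that is an isomorphism locally. I expect the main obstacle to be the vanishing $H^0({\mathbb{I}_d},\scr{A}_\mu^{<b})=0$ in Step 1 for all needed $b$, including $b=0$. For this I would use the series description of $\scr{A}_\mu^{\leqslant 0}$ together with the sheafification subtleties, imitating the uniqueness argument of Theorem~\ref{local to global split}. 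A secondary point is checking the flatness claim in Step 2 carefully.
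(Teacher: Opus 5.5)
Your proposal is correct and is essentially the paper's argument. The paper only says the proof is parallel to Sabbah's for Stokes-filtered local systems once Theorem~\ref{local to global split} is known, and your Steps 1--3 spell out exactly that parallel. Morphisms become graded on each $\mathbb{I}_d$ with $d$ generic, by the same vanishing of $H^0(\mathbb{I}_d,\scr{A}_\mu^{<b})$ that gives uniqueness in that theorem. The graded case then follows from freeness of $\gr\scr{L}$ over $\C[L]$, and the result glues over finitely many generic $\mathbb{I}_d$.

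One harmless slip: the coefficient of the entry of $\nu$ in $\homscr(\gr_c,\gr_{c'})$ lies in $\scr{A}_\mu^{<c-c'}$, not $\scr{A}_\mu^{<c'-c}$. Genericity kills both, since $\Omega_{C,\mu}\cup\{0\}$ is symmetric under negation.
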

\begin{proof}
    The proof is completely parallel to that for standard Stokes-filtered local systems \cite{Sabbah} 
    once we have proved 
    Theorem \ref{local to global split}. 
\end{proof}

\subsection{De Rham functor}\label{functor}
Let $(E,\mathcal{I},\Xi, (E^0,\nabla^0))$ be
an object in $\mathsf{dR}_{C,\mu}$. 
Using notations $\mathcal{I}=(I_k\mid k\in \Lambda_K)$, $\Xi=(\Xi_k)_{k\in\Lambda_K}$,
and $\nabla^k=\Xi_k^{-1}\circ \nabla^0\circ \Xi_k$, we set 
\begin{align*}
    \DR^\mu(E\otimes_{\scr{O}} \scr{A}_{|I_k},\nabla^k)&=\scr{A}_\mu\scr{H}^0{\DR}(E\otimes_{\scr{O}} \scr{A}_{|I_k},\nabla^k).
\end{align*}
Here, on the right hand side, the action
$\mathscr{A}_\mu\subset \widetilde{\mathscr{O}}$
on \[
\scr{H}^0{\DR}(E\otimes_{\scr{O}} \scr{A}_{|I_k},\nabla^k)
\subset E\otimes_{\mathscr{O}}\widetilde{\mathscr{O}}\]
is defined. 
We then define the filtration on $\DR^\mu(E\otimes_{\scr{O}} \scr{A}_{|I_k},\nabla^k)$ as follows:
\begin{align*}
    \DR^\mu_{\leqslant c}(E\otimes_{\scr{O}} \scr{A}_{|I_k},\nabla^k)&=\sum_{c'+c''\leqslant c}
    \scr{A}_\mu ^{\leqslant c'}\scr{H}^0\DR_{\leqslant c''}(E\otimes_{\scr{O}} \scr{A}_{|I_k},\nabla^k).
\end{align*}
\begin{lemma}\label{fildef}
    On $I_{k,k+1}=I_k\cap I_{k+1}$ $(k\in\Lambda_K)$,
    we have the equality \[\DR^\mu_{\leqslant c}(E\otimes_{\scr{O}} \scr{A}_{|I_k},\nabla^k)
    =\DR^\mu_{\leqslant c}(E\otimes_{\scr{O}} \scr{A}_{|I_{k+1}},\nabla^{k+1})
    \]
    as subsheaves in $E\otimes_\scr{O}\widetilde{\mathscr{O}}_{|I_{k,k+1}}$.
\end{lemma}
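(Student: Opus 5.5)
Transport both sides to $E^0$ via $\Xi_k$. Since $\nabla^k=\Xi_k^{-1}\nabla^0\Xi_k$ and $\Xi_k$ is an isomorphism of $\scr{A}$-modules, $\Xi_k$ maps $\scr{H}^0\DR(E\otimes\scr{A}_{|I_k},\nabla^k)$ onto $\scr{H}^0\DR(E^0\otimes\scr{A}_{|I_k},\nabla^0)$. We will show that it also maps the filtered pieces to filtered pieces:
\[
\Xi_k\bigl(\scr{H}^0\DR_{\leqslant c''}(E\otimes\scr{A}_{|I_k},\nabla^k)\bigr)=\scr{H}^0\DR_{\leqslant c''}(E^0\otimes\scr{A}_{|I_k},\nabla^0).
\]
Here the right-hand side is understood inside $E^0\otimes\widetilde{\scr{O}}$ as flat sections of moderate growth relative to $e^{c''/z}$. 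The point is that $\Xi_k$ and $\Xi_k^{-1}$ have entries in $\scr{A}\subset\scr{A}^{\leqslant 0}$, so they preserve $E\otimes\scr{A}^{\leqslant c''}$. Consequently the statement is equivalent to the equality, on $I_{k,k+1}$,
\[
\DR^\mu_{\leqslant c}(E^0,\nabla^0)=g\cdot \DR^\mu_{\leqslant c}(E^0,\nabla^0),\qquad g\coloneqq\Xi_k\circ\Xi_{k+1}^{-1}\in H^0(I_{k,k+1},\aut^{<0}_\mu(E^0)),
\]
where $\DR^\mu_{\leqslant c}(E^0,\nabla^0)=\sum_{c'+c''\leqslant c}\scr{A}_\mu^{\leqslant c'}\scr{H}^0\DR_{\leqslant c''}(E^0,\nabla^0)$. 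We will check this stalkwise at each $e^{\i\theta}\in I_{k,k+1}$.

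By symmetry, since $g^{-1}$ also lies in $\aut^{<0}_\mu(E^0)$ because it is a sheaf of groups, it suffices to prove one inclusion. Write $g=\id+\sum_j a_j\phi_j$ with $a_j\in\scr{A}_\mu^{\leqslant c_j}$, $\phi_j\in\scr{H}^0\DR_{\leqslant c'_j}(\End(E^0))$ and $c_j+c'_j<_\theta 0$. Take a generator $b\,v$ of $\DR^\mu_{\leqslant c}$ with $b\in\scr{A}_\mu^{\leqslant c'}$, $v$ flat, $v\in E^0\otimes\scr{A}^{\leqslant c''}$ and $c'+c''\leqslant_\theta c$. Then
\[
g(bv)=bv+\sum_j (a_jb)\,\phi_j(v).
\]
In each correction term, $a_jb\in\scr{A}_\mu^{\leqslant c_j+c'}$ by multiplicativity of the filtration. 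The section $\phi_j(v)$ is $\nabla^0$-flat, being a flat endomorphism applied to a flat section. It lies in $E^0\otimes\scr{A}^{\leqslant c'_j+c''}$ by $\scr{A}^{\leqslant a}\scr{A}^{\leqslant b}\subset\scr{A}^{\leqslant a+b}$. The total index satisfies $c_j+c'+c'_j+c''<_\theta c$, hence is $\leqslant_\theta c$. So each correction term lies in $\DR^\mu_{\leqslant c}$ at the stalk, as required.

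The main obstacle is bookkeeping with the pointwise orders $\leqslant_\theta$ versus the sheaf definitions. Both the sum defining $\DR^\mu_{\leqslant c}$ and the one defining $\scr{E}nd^{<0}_\mu$ are taken stalkwise over indices satisfying inequalities at $\theta$. Therefore we must make sure that a germ-level decomposition of $g$ and of the generator can be realized simultaneously on a common small arc around $e^{\i\theta}$. This is done by shrinking the arc. We also need that $<_\theta$ combined with $\leqslant_\theta$ yields $<_\theta$, which follows from additivity of $\mathrm{Re}(e^{-\i\theta}\cdot)$. A secondary point is the first reduction: we must verify that $\Xi_k^{\pm1}$ preserve $E\otimes\scr{A}^{\leqslant c''}$ in the sense used to define $\DR_{\leqslant c''}$ for $E$. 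This holds because $\Xi_k$ has $\scr{A}$-coefficients and $\scr{A}\cdot\scr{A}^{\leqslant c''}\subset\scr{A}^{\leqslant c''}$.
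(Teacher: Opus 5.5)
Your proposal is correct and follows the same route as the paper: you transport both sides to $E^0$ via $\Xi_k$, which reduces the claim to showing that the transition automorphism $\Xi_k\circ\Xi_{k+1}^{-1}\in\aut^{<0}_\mu(E^0)$ and its inverse preserve $\DR^\mu_{\leqslant c}(E^0,\nabla^0)$. For the inverse you rely on the group property of $\aut^{<0}_\mu(E^0)$, which the paper also uses implicitly; the paper only asserts the preservation step, whereas your stalkwise computation with $\scr{A}_\mu^{\leqslant a}\scr{A}_\mu^{\leqslant b}\subset\scr{A}_\mu^{\leqslant a+b}$ and the flatness of $\phi_j(v)$ actually proves it.
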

\begin{proof}
    By the condition \eqref{aut}, the automorphism $\Xi_{k+1}\circ \Xi_k^{-1}$ preserves the filtration. Thus, we have
    \begin{align*}
        \DR^\mu_{\leqslant c}(E\otimes_{\scr{O}} \scr{A}_{|I_k},\nabla^k)
        &=\Xi_k^{-1}[\DR^\mu_{\leqslant c}(E^0,\nabla^0)]\\
        &=(\Xi_k^{-1}\circ \Xi_{k+1}) \circ \Xi_{k+1}^{-1}[\DR^\mu_{\leqslant c}(E^0,\nabla^0)]\\
        &=(\Xi_{k+1}\circ \Xi_k^{-1})^{-1} [\DR^\mu_{\leqslant c}(E\otimes_{\scr{O}} \scr{A}_{|I_{k+1}},\nabla^{k+1})]\\
        &=\DR^\mu_{\leqslant c}(E\otimes_{\scr{O}} \scr{A}_{|I_{k+1}},\nabla^{k+1}),
    \end{align*}
    where the last equality follows from the fact that $\Xi_{k+1}\circ \Xi_k^{-1} \in \aut^{<0}_\mu(E^0)$.
\end{proof}
Then we define a filtered module 
over $\scr{A}_\mu$,
which we denote by $\DR^\mu(E,\Xi)$. 
\begin{lemma}\label{natural}
    For a morphism
    $f=(f,f^0)\in \Hom_{\dR}((E,\Xi),(F,\Theta))$,
    there exists a natural morphism of filtered $\scr{A}_\mu$-modules$\colon$
    \begin{align*}
        f_*\colon \DR^\mu (E,\Xi)\to
        \DR^\mu(F,\Theta).
    \end{align*}
\end{lemma}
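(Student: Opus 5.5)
The morphism $f_*$ will be obtained by applying $f\otimes\id_{\widetilde{\scr{O}}}\colon E\otimes_{\scr{O}}\widetilde{\scr{O}}\to F\otimes_{\scr{O}}\widetilde{\scr{O}}$ to the subsheaf $\DR^\mu(E,\Xi)\subset E\otimes_{\scr{O}}\widetilde{\scr{O}}$. The work is to show that this lands in $\DR^\mu(F,\Theta)$ and respects the filtrations. Since $f\otimes\id$ is defined globally on $S^1$ and is $\widetilde{\scr{O}}$-linear, hence $\scr{A}_\mu$-linear, these are local statements. Compatibility with gluing is then automatic: by Lemma \ref{fildef}, $\DR^\mu(E,\Xi)$ and its filtration are subsheaves of $E\otimes\widetilde{\scr{O}}$ that do not depend on $k$.

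First, I fix a mutual refinement $(\cal{I}''=(I''_p)_{p\in\Lambda_P},\mathsf{u},\mathsf{v})$ as in \eqref{mor}. On each $I''_p$, the relation $\Theta_{\mathsf{v}(p)}\circ f=f^0\circ\Xi_{\mathsf{u}(p)}$ gives the intertwining
\[f\circ\nabla^{\mathsf{u}(p)}=f\circ\Xi_{\mathsf{u}(p)}^{-1}\nabla^{E,0}\Xi_{\mathsf{u}(p)}=\Theta_{\mathsf{v}(p)}^{-1}f^0\nabla^{E,0}\Xi_{\mathsf{u}(p)}=\Theta_{\mathsf{v}(p)}^{-1}\nabla^{F,0}f^0\Xi_{\mathsf{u}(p)}=\nabla^{F,\mathsf{v}(p)}\circ f,\]
where the third equality uses the flatness of $f^0$. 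Hence $f$ maps $\scr{H}^0\DR(E\otimes\scr{A}_{|I''_p},\nabla^{\mathsf{u}(p)})$ into $\scr{H}^0\DR(F\otimes\scr{A}_{|I''_p},\nabla^{F,\mathsf{v}(p)})$.

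Next comes the filtration. I will check that $f$ maps $\scr{H}^0\DR_{\leqslant c''}(E\otimes\scr{A},\nabla^{\mathsf{u}(p)})$ into $\scr{H}^0\DR_{\leqslant c''}(F\otimes\scr{A},\nabla^{F,\mathsf{v}(p)})$. Here I use the description in the proof of Lemma \ref{fildef}: these subsheaves are $\Xi^{-1}$, respectively $\Theta^{-1}$, applied to the corresponding flat sections of $(E^0,\nabla^0)$, respectively $(F^0,\nabla^0)$. On the elementary exponential model, the flat morphism $f^0$ preserves moderate-growth flat sections with growth $\leqslant c''$. The reason is that it is a $\C\conv{z}$-linear matrix, and such a matrix preserves $\scr{A}^{\leqslant c''}$ because convergent Laurent coefficients have moderate growth. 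Conjugating by $\Xi$ and $\Theta$ via \eqref{mor} then gives the claim. Multiplying by $\scr{A}_\mu^{\leqslant c'}$ and summing over $c'+c''\leqslant c$, linearity shows that $f$ sends $\DR^\mu_{\leqslant c}(E,\Xi)$ into $\DR^\mu_{\leqslant c}(F,\Theta)$ on each $I''_p$, and hence globally.

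Finally, naturality: $(g\circ f)_*=g_*\circ f_*$ and $(\id)_*=\id$ hold because the construction is the restriction of $f\otimes\id_{\widetilde{\scr{O}}}$. The map $f_*$ also does not depend on the choice of refinement, since it is determined by $f$ alone.

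The main obstacle I expect is the growth-preservation step on the model. One has to make sure that the filtration defined with $\scr{A}^{\leqslant c''}$, and not with the sheaf-theoretic $<$-version, is preserved even when $f^0$ mixes exponential factors. Since $f^0$ is flat between elementary exponential connections, it is block-diagonal with respect to the decomposition by exponential factors: the components $c\neq c'$ vanish because $\hom$ between distinct factors has no nonzero flat meromorphic sections. This makes the check straightforward.
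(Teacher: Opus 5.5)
Your proposal is correct and follows essentially the same route as the paper. The key step is the intertwining identity $f\circ\nabla^{\mathsf{u}(p)}=\nabla^{F,\mathsf{v}(p)}\circ f$ on each $I''_p$ of a mutual refinement, obtained from \eqref{mor} and the flatness of $f^0$, after which the local maps glue. You also spell out the filtration compatibility that the paper leaves implicit; your closing block-diagonality remark is harmless but unnecessary, since $\C\conv{z}$-linearity of $f$ already sends $E\otimes\scr{A}^{\leqslant c''}$ into $F\otimes\scr{A}^{\leqslant c''}$.
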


\begin{proof}
    We use the notation in Definition \ref{dRE}.
    Set $\nabla^{\Xi,p}\coloneqq \Xi_{\mathsf{u}(p)}^{-1}\nabla^{0}\Xi_{\mathsf{u}(p)}$ and
    $\nabla^{\Theta,p}\coloneqq \Theta_{\mathsf{v}(p)}^{-1}\nabla^{0}\Theta_{\mathsf{v}(p)}$
    for $p\in \Lambda_P$.
    On $I''_p$, we compute
    \begin{align*}
        \nabla^{\Theta,p}\circ f
        &= (\Theta_{\mathsf{v}(p)}^{-1}\nabla^{0} \Theta_{\mathsf{v}(p)}) \circ f \\
        &= \Theta_{\mathsf{v}(p)}^{-1} \nabla^{0} (f^0 \circ \Xi_{\mathsf{u}(p)}) \quad (\text{by \eqref{mor}}) \\
        &= \Theta_{\mathsf{v}(p)}^{-1} f^0 \nabla^{0} \Xi_{\mathsf{u}(p)} \quad (\text{since } f^0 \text{ is flat}) \\
        &= f \circ (\Xi_{\mathsf{u}(p)}^{-1} \nabla^{0} \Xi_{\mathsf{u}(p)}) \\
        &= f \circ \nabla^{\Xi,p}.
    \end{align*}
    Hence, we obtain a morphism
    \[f_{*|I''_p}\colon \DR^\mu(E,\Xi)_{|I''_p}\to \DR^\mu(F,\Theta)_{|I''_p}\]
    for any $p\in\Lambda_P$, which induces the desired global morphism. 
\end{proof}
\begin{proposition}\label{split-exsist}
    We obtain a functor $\DR^\mu\colon \mathsf{dR}_{C,\mu} \to \mathsf{Be}_{C,\mu}$.
\end{proposition}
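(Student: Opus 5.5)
Lemma \ref{fildef} and Lemma \ref{natural} already provide, for every object $E=((E,\mathcal{I},\Xi),(E^0,\nabla^0))$ of $\mathsf{dR}_{C,\mu}$, a filtered $\scr{A}_\mu$-module $\DR^\mu(E,\Xi)$ glued from the pieces over the $I_k$, and a filtered morphism $f_*$ for each morphism $f$. Functoriality ($(g\circ f)_*=g_*\circ f_*$ and $\id_*=\id$) is immediate: on a common refinement $f_*$ is just the restriction of $f\otimes\id$ to flat sections times $\scr{A}_\mu$. The content of the proposition is therefore that $\DR^\mu(E,\Xi)$ lies in $\mathsf{Be}_{C,\mu}$.

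The plan is to reduce everything to the elementary model. On $I_k$, the isomorphism $\Xi_k$ identifies $\DR^\mu(E\otimes\scr{A}_{|I_k},\nabla^k)$, together with its filtration, with $\DR^\mu(E^0,\nabla^0)_{|I_k}$; this is the first equality in the proof of Lemma \ref{fildef}. Properties (a)--(c) of Definition \ref{DefBe} are local on $S^1$. So it suffices to check them for $\scr{M}\coloneqq\DR^\mu(E^0,\nabla^0)$, and additionally to check that the gluing automorphisms $\Xi_k\Xi_{k+1}^{-1}$ induce identity on $\gr$.

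For $E^0=\bigoplus_c(\C\conv{z}^{r_c},\nabla^{0,c})$, the flat sections on a sector are explicit:
\[
\scr{H}^0\widetilde{\DR}(E^0)=\bigoplus_{c\in C} e^{-c/z}z^{A_c}\C^{r_c}.
\]
Here $e^{-c/z}z^{A_c}\C^{r_c}\subset \DR_{\leqslant -c}$, and it meets $\DR_{<-c}$ only in $0$. Hence
\[
\scr{M}=\bigoplus_c \scr{A}_\mu\, e^{-c/z}z^{A_c}\C^{r_c},\qquad \scr{M}_{\leqslant b}=\bigoplus_c \scr{A}_\mu^{\leqslant b+c}\,e^{-c/z}z^{A_c}\C^{r_c}.
\]
I would verify this using $\scr{H}^0\DR_{\leqslant c''}(E^0)=\bigoplus_{c}\scr{A}^{\leqslant c''+c}\cap\C\cdot(\text{flat})$, i.e. the flat sections of factor $-c$ with $-c\leqslant_\theta c''$.

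The key computation is then
\[
\gr_b\scr{A}_\mu=\bigoplus_{\mu(\gamma)=b}\C\,u^\gamma .
\]
One checks this stalkwise: a section of $\scr{A}_\mu^{\leqslant b}$ is a convergent sum $\sum a_\gamma u^\gamma$ with $\mu(\gamma)\leqslant_\theta b$, and modulo $\scr{A}_\mu^{<b}$ only the term with $\mu(\gamma)=b$ survives. This uses the Lemma preceding the definition of $\scr{A}_\mu^{\leqslant 0}$ together with the injectivity of $\mu$. Consequently
\[
\gr_b\scr{M}=\bigoplus_{c,\gamma:\ \mu(\gamma)-c=b} u^\gamma e^{-c/z}z^{A_c}\C^{r_c}.
\]
This is a local system, which gives (a). Its support is contained in $-C+\mu(L)$, which gives (b); here the assumption that distinct elements of $C$ lie in distinct $L$-orbits ensures that the pair $(c,\gamma)$ is unique. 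Moreover $\scr{M}\simeq\scr{A}_\mu\otimes_{\C[L]}\gr\scr{M}$ with $\gr=\id$, which gives (c). In other words, $\scr{M}$ is graded.

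The remaining point, which I expect to be the main technical obstacle, is that gluing by $g_k=\Xi_k\Xi_{k+1}^{-1}\in\aut^{<0}_\mu(E^0)$ acts trivially on $\gr$. Then the local splittings $\Xi_k^{-1}\circ\eta$ on $I_k$ satisfy $\gr=\id$ after identifying $\gr\DR^\mu(E,\Xi)$ with $\gr\scr{M}$ globally. Writing $g_k=\id+\sum a\,\phi$ with $a\in\scr{A}_\mu^{\leqslant c}$, $\phi\in\scr{H}^0\DR_{\leqslant c'}\End(E^0)$ and $c+c'<_\theta0$, each term maps $\scr{M}_{\leqslant b}$ into $\scr{M}_{\leqslant b+c+c'}\subset\scr{M}_{<b}$ near $e^{\i\theta}$, by the multiplicativity $\scr{A}^{\leqslant c}\scr{A}^{\leqslant c'}\subset\scr{A}^{\leqslant c+c'}$. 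This shows both that the filtration is preserved, as already used in Lemma \ref{fildef}, and that $\gr(g_k)=\id$. The delicate part is handling the stalkwise sum over infinitely many $c$ in the definition of $\scr{E}nd^{<0}_\mu$, so that the inclusion into $\scr{M}_{<b}$ holds on a neighborhood and not merely pointwise. I would do this by working at a single stalk, where only finitely many terms are needed in a given expression. Once this is in place, the filtration axioms (a) and (b) of a filtered $\scr{A}_\mu$-module are inherited from $\scr{M}$, and $\DR^\mu$ lands in $\mathsf{Be}_{C,\mu}$.
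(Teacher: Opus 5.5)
Your proposal is correct and follows essentially the same route as the paper, which transports everything along $\eta_{I_k}=\Xi_k^{-1}$ to the model $\DR^\mu(E^0,\nabla^0)$ and invokes the fact that this model is graded. You also supply details the paper leaves implicit: the computation of $\gr\scr{A}_\mu$ and $\gr\DR^\mu(E^0,\nabla^0)$, and the stalkwise check that the transitions in $\aut^{<0}_\mu(E^0)$ act as the identity on $\gr$, so that the local splittings have $\gr=\id$ with respect to a globally defined $\gr$ (there is one harmless slip: the flat sections are $e^{-c/z}z^{-A_c}\C^{r_c}$, not $e^{-c/z}z^{A_c}\C^{r_c}$, which changes nothing).
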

\begin{proof}
    It remains to show that $\DR^\mu(E,\Xi)$ is an object of $\mathsf{Be}_{C,\mu}$.
    More precisely, we need to verify that it satisfies the conditions in Definition \ref{DefBe}.
    This follows from the isomorphism
    \begin{align*}
        \eta_{I_k}\coloneqq \Xi_k^{-1}\colon
        \DR^\mu(E^0,\nabla^0)_{|I_k}\simeqto
        \DR^\mu (E,\Xi)_{|I_k}
    \end{align*}
    defined for each $k\in\Lambda_K$, together with the fact that $\DR^\mu(E^0,\nabla^0)$ is a graded $\scr{A}_\mu$-module.
\end{proof}
The following  
is the main theorem of this paper. 
\begin{theorem}\label{main theorem}
    The functor 
    \[\DR^\mu\colon \mathsf{dR}_{C,\mu} \to \mathsf{Be}_{C,\mu}\]
    is an equivalence of abelian categories.
\end{theorem}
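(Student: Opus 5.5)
The plan is to prove that $\DR^\mu$ is fully faithful and essentially surjective, modelling each step on the classical Deligne--Malgrange argument. Throughout, the elementary exponential part $(E^0,\nabla^0)$ plays the role of the graded object. The basic local computation comes first. For an elementary exponential $(E^0,\nabla^0)=\bigoplus_{c}({\C\conv{z}}^{r_c},\nabla^{0,c})$, the sheaf $\DR^\mu(E^0,\nabla^0)$ is the graded object $\scr{A}_\mu\otimes_{\C[L]}\bigl(\bigoplus_{c\in C,\gamma\in L}u^\gamma\,\scr{H}^0\DR(E^0_c)\bigr)$. Here the horizontal sections of $\nabla^{0,c}$ have the form $e^{c/z}z^{A_c}v$, so that $\gr_{-c+\mu(\gamma)}$ or $\gr_{c+\mu(\gamma)}$ (depending on sign conventions) is the local system $u^\gamma\scr{H}^0\DR(E^0_c)$.

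From this I extract the key sheaf identity:
\[
\aut^{<0}_\mu(E^0)=\scr{A}ut^{<0}\bigl(\DR^\mu(E^0,\nabla^0)\bigr),
\]
where the right-hand side consists of the filtered $\scr{A}_\mu$-automorphisms $\eta$ of the graded object with $\gr\eta=\id$. Concretely, a filtered $\scr{A}_\mu$-endomorphism that induces zero on $\gr$ is determined by its values on the horizontal basis. It maps $\gr_{c'}$-pieces into $\scr{A}_\mu^{<\,\cdot}$-combinations, which is precisely a section of $\sum_{c+c'<0}\scr{A}^{\leqslant c}_\mu\scr{H}^0\DR_{\leqslant c'}\End(E^0)$. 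Conversely, each such section acts flatly and preserves the filtration, as in Lemma~\ref{fildef}. The identity holds because $\nabla^0$-flat sections of $\End(E^0)\otimes\widetilde{\scr{O}}$ are $\C$-combinations of $e^{(c-c')/z}z^{A_c}(\cdot)z^{-A_{c'}}$, and multiplying by $\scr{A}_\mu$ gives exactly the $\scr{A}_\mu$-linear maps.

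Next comes full faithfulness. Given objects $E=(E,\mathcal{I},\Xi)$ and $F=(F,\mathcal{I}',\Theta)$, pass to a mutual refinement so that both are trivialised on the same cover. A filtered morphism $\phi\colon\DR^\mu(E,\Xi)\to\DR^\mu(F,\Theta)$ induces $\gr\phi$, a morphism of graded local systems compatible with $\C[L]$. By the Deligne--Malgrange correspondence for elementary exponential connections (the case $L=0$), $\gr\phi$ comes from a unique flat $f^0\colon E^0\to F^0$. On each $I''_p$ I then define $f_p=\Theta_{\mathsf{v}(p)}^{-1}\circ\widetilde\phi_p\circ\Xi_{\mathsf{u}(p)}$, where $\widetilde\phi_p$ is the $\nabla^0$-flat $\scr{A}$-linear extension of $\phi$ transported to $E^0\otimes\scr{A}\to F^0\otimes\scr{A}$. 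The $f_p$ glue, since $\phi$ is global and the $\Xi,\Theta$ are compatible. Their asymptotic expansions agree, so the glued map is a section of $\Hom(E,F)\otimes\scr{A}$ over all of $S^1$, hence lies in $\Hom(E,F)\otimes{\C\conv{z}}$ (global sections of $\scr{A}$ are convergent series), which gives $f$. The condition $\Theta f\Xi^{-1}=f^0$ may fail on the nose; it holds only up to an element of $\aut^{<0}$. To handle this, I refine the cover further. On a small enough arc, using $\gr\phi=\gr f^0_*$ together with Theorem~\ref{local to global split}, I choose a splitting in which $\phi$ is exactly $f^0$. Faithfulness is immediate, because $f$ is recovered from $f_*$ on any sector, as horizontal sections span $E\otimes\scr{A}$ after $\scr{A}$-extension.

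Essential surjectivity will be the main obstacle. Let $\scr{L}\in\Be_{C,\mu}$. Its graded part $\gr\scr{L}$ is a $\C[L]$-equivariant graded local system with indices in $-C+\mu(L)$. Choosing representatives $c\in C$ of the $L$-orbits, as the hypothesis on $C$ allows, it is determined by finitely many local systems $\gr_{-c}\scr{L}$, $c\in C$. Classical Riemann--Hilbert for regular singular connections gives $(E^0,\nabla^0)$ elementary exponential with $\DR^\mu(E^0)\simeq\scr{A}_\mu\otimes\gr\scr{L}$. Condition (c) of Definition~\ref{DefBe} provides a cyclic cover $\mathcal{I}$ with splittings $\eta_k$, and by the identity above the transition maps $g_k=\eta_k^{-1}\eta_{k+1}$ are sections of $\aut^{<0}_\mu(E^0)\subset\aut^{<0}(E^0\otimes\scr{A})$, the sheaf of $\scr{A}$-automorphisms asymptotic to the identity. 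What is needed is a Malgrange--Sibuya type statement: every cocycle in $H^1(S^1,\aut^{<0}(E^0\otimes\scr{A}))$ (Gevrey-one, flatness not required) is trivialised by $\Xi_k\in GL(E^0\otimes\scr{A})(I_k)$ with $\Xi_k\Xi_{k+1}^{-1}=g_k$, modulo a global convergent base change. Equivalently, the vector bundle obtained by gluing trivial $\scr{A}$-modules via $g_k$ is trivial. I would cite the Gevrey-one Malgrange--Sibuya theorem (non-abelian $H^1(S^1,\Lambda^{<0}_1)$, as in Balser's book). It applies because the entries of $g_k-\id$ lie in $\scr{A}^{<0}$: sums of $\scr{A}_\mu^{\leqslant c}$ times flat sections of $\DR_{\leqslant c'}$ with $c+c'<0$ on the overlap, by the Lemma of \S\ref{Amu}. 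The resulting $\Xi_k$ then provide $E=$ the space of global sections, a ${\C\conv{z}}$-vector space, with $\Xi_k\colon E\otimes\scr{A}\to E^0\otimes\scr{A}$ satisfying \eqref{aut}. Finally, the isomorphisms $\Xi_k^{-1}\circ\eta_k$ glue to an isomorphism $\DR^\mu(E,\Xi)\simeq\scr{L}$, by the same computation as in Lemma~\ref{fildef}.

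The delicate point is checking that the Gevrey-one Malgrange--Sibuya theorem applies with the sheaf $\scr{A}$ as defined here, and that the resulting $\Xi_k$ have Gevrey-one asymptotics. Abelianness and exactness then follow formally from the equivalence.
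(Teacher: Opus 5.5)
Your overall plan matches the paper's in three places. First, the identification $\aut^{<0}_\mu(E^0)\simeq\aut^{<0}_\mu(\DR^\mu(E^0,\nabla^0))$. Second, fullness by transporting a Betti morphism through the lifts; the paper packages this via the internal hom $\hom(E,F)$, Theorem~\ref{homiso} and Lemma~\ref{Bettihom}, but it is the same computation. Third, essential surjectivity by feeding the cocycle $\eta_k^{-1}\eta_{k+1}$, which lies in $\id+\End(E^0)\otimes\scr{A}^{<0}$, into the Gevrey-one Malgrange--Sibuya theorem.

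The gap is the last step of fullness, where you try to enforce \eqref{mor}. Refining the cover gives you no freedom. The $\Xi_k,\Theta_k$ are fixed data of the objects, and a refinement only restricts them, so you cannot ``choose a splitting''. Moreover, the discrepancy $\Theta f\Xi^{-1}-f^0$ is a section of $\sum_{c+c'<0}\scr{A}^{\leqslant c}_\mu\scr{H}^0\DR_{\leqslant c'}(\hom(E^0,F^0))$, i.e.\ holomorphic on a sector. If it is nonzero on $I''_p$, it stays nonzero on every sub-arc. Short arcs are exactly where such sections exist, since $e^{b/z}$ decays rapidly near $e^{\i\theta}$ for every $b\in\Omega_{C,\mu}$ with $b<_\theta 0$. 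Shrinking the arcs goes in the wrong direction.

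The missing idea is rigidity on \emph{wide} arcs. Take $d$ $(C,\mu)$-generic. The function $e^{b/z}$ decays on all of $\mathbb{I}_d$ only for $b\in-\ell_d$, and no $b\in\Omega_{C,\mu}=-\Omega_{C,\mu}$ lies there. So the sheaf above has no nonzero sections over $\mathbb{I}_d$ (last paragraph of the proof of Theorem~\ref{local to global split}), and splittings over $\mathbb{I}_d$ are unique. This is how the paper proves Theorem~\ref{homiso}. Lifts $\Xi^d,\Theta^d$ over $\mathbb{I}_d$ exist by Theorem~\ref{local to global split} and Proposition~\ref{summability}. Then $\Theta^d\circ f\circ(\Xi^d)^{-1}$ is a filtered map of graded objects whose $\gr$ is $f^0$, so it equals $f^0$ exactly.

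Your suspicion that \eqref{mor} can fail for arbitrary lifts is nevertheless correct. Replace one $\Xi_k$ on a short arc by $(\id+N)\Xi_k$, with $0\neq N\in\scr{E}nd^{<0}_\mu(E^0)$. This changes neither $\widehat{\Xi}$ nor $\DR^\mu$, yet $(\id,\id)$ then violates \eqref{mor}. So the argument only closes if \eqref{mor} is tested against the lifts over the $\mathbb{I}_d$, or equivalently read modulo such $\aut^{<0}_\mu$-changes of lifts. The paper's ``which implies the claim'' makes this normalization tacitly. You should make it explicitly, instead of trying to repair \eqref{mor} by refinement.
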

The proof will be given in 
\S \ref{PROOF}.

\subsection{Existence of analytic lifts and summability}
\begin{proposition}
\label{summability}
   Let $I\subset S^1$ be an open arc. Take objects
    \[(E,\Xi)=(E,\mathcal{I},({\Xi}_k)_{k\in\Lambda_N},(E^0,\nabla^0))\in {\dR}_{C,\mu}\] 
    and $\scr{L}\in \mathsf{Be}_{C,\mu}$ 
    such that  ${\DR}^\mu(E,\Xi)\simeq \scr{L}$. 
    Then, there exists 
    an analytic lift
    \[\Xi_I\colon E\otimes \scr{A}_{|I}\longrightarrow E^0\otimes\scr{A}_{|I}\]
    such that $\Xi_I\circ \Xi_{k}^{-1}\in H^0(I\cap I_k,\aut^{<0}_\mu(E^0))$
    if and only if 
    there exists a filtered isomorphism 
    \[\eta_I\colon (\scr{A}_\mu\otimes \gr(\scr{L}))_{|I}\longrightarrow \scr{L}_{|I}\] 
    such that $\gr(\eta_I)=\id$.     
\end{proposition}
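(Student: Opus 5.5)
We reduce both directions to the dictionary given by the local isomorphisms $\eta_{I_k}=\Xi_k^{-1}\colon \DR^\mu(E^0,\nabla^0)_{|I_k}\simeqto \DR^\mu(E,\Xi)_{|I_k}$ from the proof of Proposition \ref{split-exsist}. We fix once and for all the isomorphism $\DR^\mu(E,\Xi)\simeq\scr{L}$, so that $\gr\scr{L}\simeq\gr\DR^\mu(E^0,\nabla^0)$ and $\scr{A}_\mu\otimes\gr\scr{L}\simeq \DR^\mu(E^0,\nabla^0)$, the latter being graded. The key observation is that filtered automorphisms $g$ of the graded object $\DR^\mu(E^0,\nabla^0)_{|J}$ with $\gr(g)=\id$ should be exactly the sections of $\aut^{<0}_\mu(E^0)$ over $J$, acting through $E^0\otimes\widetilde{\scr{O}}$. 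One inclusion is the computation already used in Lemma \ref{fildef}. For the converse, we decompose $g$ into blocks between the summands $\scr{A}_\mu\otimes\scr{H}^0\DR(E^0_c)$. Since $g$ is $\scr{A}_\mu$-linear and each $\scr{H}^0\DR(E^0_c)$ is a local system generated by $e^{c/z}z^{A_c}$-type solutions, each block is a matrix with entries in $\scr{A}_\mu$ composed with flat sections of $\hom(E^0_c,E^0_{c'})$. The condition that $g-\id$ maps the $\leqslant c$ part into $<c$ then places the block in $\sum_{a+b<0}\scr{A}_\mu^{\leqslant a}\scr{H}^0\DR_{\leqslant b}(\End(E^0))$. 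This yields a bijection
\[\{\text{filtered }g\text{ on }\DR^\mu(E^0,\nabla^0)_{|J}\text{ with }\gr g=\id\}\;\longleftrightarrow\; H^0(J,\aut^{<0}_\mu(E^0)).\]

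For ($\Rightarrow$), given $\Xi_I$, we set $\eta_I\coloneqq\Xi_I^{-1}\colon \DR^\mu(E^0,\nabla^0)_{|I}\to E\otimes\widetilde{\scr{O}}_{|I}$. On each $I\cap I_k$ we have $\Xi_I^{-1}=\Xi_k^{-1}\circ(\Xi_k\Xi_I^{-1})$, and $\Xi_k\Xi_I^{-1}\in\aut^{<0}_\mu(E^0)$. By the computation in Lemma \ref{fildef}, this map sends $\DR^\mu_{\leqslant c}(E^0)$ isomorphically onto $\DR^\mu_{\leqslant c}(E,\Xi)$ and induces the identity on $\gr$. Hence $\eta_I$ is a filtered isomorphism onto $\scr{L}_{|I}$ with $\gr(\eta_I)=\id$, and the local pieces obviously agree on overlaps since they are all restrictions of the single map $\Xi_I^{-1}$.

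For ($\Leftarrow$), given $\eta_I$, we consider on $I\cap I_k$ the automorphism $g_k\coloneqq \eta_{I_k}^{-1}\circ\eta_I$ of $\DR^\mu(E^0,\nabla^0)_{|I\cap I_k}$. It is filtered with $\gr(g_k)=\id$, so by the key observation it is given by some $h_k\in H^0(I\cap I_k,\aut^{<0}_\mu(E^0))$. We define $\Xi_I\coloneqq h_k^{-1}\circ\Xi_k$ on $I\cap I_k$. On the triple overlaps $I\cap I_k\cap I_{k+1}$ these definitions agree: we have $g_{k}^{-1}g_{k+1}$ corresponding to $\eta_I^{-1}\Xi_k^{-1}\Xi_{k+1}\eta_I$, and comparing the transition $\Xi_k\Xi_{k+1}^{-1}$ with $h_kh_{k+1}^{-1}$ gives compatibility. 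Here injectivity of the correspondence between $\aut^{<0}_\mu$-sections and their action on $\DR^\mu(E^0)$ is needed; this holds because $\DR^\mu(E^0)$ spans $E^0\otimes\widetilde{\scr{O}}$ over $\widetilde{\scr{O}}$. The local maps therefore glue to $\Xi_I$ on $I$. It has entries in $\scr{A}$ because $\aut^{<0}_\mu(E^0)\subset\End(E^0)\otimes\scr{A}$, and it satisfies $\Xi_I\Xi_k^{-1}=h_k^{-1}\in\aut^{<0}_\mu(E^0)$ as required.

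The main obstacle is the converse half of the key observation. One must check that an abstract filtered $\scr{A}_\mu$-automorphism with trivial $\gr$ really lies in $\End(E^0)\otimes\scr{A}$, and in fact in $\scr{E}nd^{<0}_\mu$. The delicate part is controlling $\scr{A}_\mu$-coefficients when $c-c'+\mu(\gamma)$ changes sign inside $J$. I would first try to handle this stalkwise, using the definition of the stalk of $\scr{E}nd^{<0}_\mu$ via $<_\theta$, and then pass to sections, since both sides are sheaves.
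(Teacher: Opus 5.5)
Your proposal is correct and follows essentially the paper's route. The paper also forms $\Xi_k\circ\eta_I$ (your $g_k=\eta_{I_k}^{-1}\circ\eta_I$) on $I\cap I_k$, observes that it is a filtered endomorphism of $\DR^\mu(E^0,\nabla^0)$ with trivial graded part, hence a section of $\DR^\mu_{\leqslant 0}(\End(E^0))$ asymptotic to the identity, and takes $\eta_I^{-1}$ as the lift. That lift is exactly your $h_k^{-1}\circ\Xi_k$, i.e.\ the $\widetilde{\scr{O}}$-linear extension of $\eta_I^{-1}$, which is independent of $k$; so your overlap-compatibility check is automatic, and your block-by-block justification of the ``key observation'' simply spells out a step the paper asserts in one line.
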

\begin{proof}
    The ``only if'' part is similar to the proof of Proposition \ref{split-exsist}.
    We show the ``if'' part.
    Let $\eta_I$ be as in the statement.
    The composition $\Xi_{k}\circ \eta_{I}$ (restricted to $I\cap I_k$) defines a section of
    \[H^0(I\cap I_k,\DR^\mu_{\leqslant 0}(\End(E^0))).\]
    Since $\gr(\eta_I)=\id$, we find that $\Xi_k\circ\eta_I$ corresponds to a flat section of $\End(E^0)$ modulo rapid decay parts.
    Therefore, $\Xi_k\circ\eta_I$ is asymptotic to the identity, i.e., $\asy(\Xi_k\circ\eta_I)=\id$.
    This implies that $\eta_I^{-1}$ provides the analytic lift of $\widehat{\Xi}_k$ on $I\cap I_k$, as desired.
\end{proof}

\begin{definition}\label{DefSummable}
    An object $(E,\Xi)$ in $\dR_{C,\mu}$ is said to be \textit{summable along $d\in \R$} if the underlying formal isomorphism $\widehat{\Xi}$ is summable along $d$.

    An object $\scr{L} \in \mathsf{Be}_{C,\mu}$ is said to be \textit{summable along $d$} if there exists a positive real number $\varepsilon$ and an isomorphism
    \[\eta_d\colon (\scr{A}_\mu\otimes \gr(\scr{L}))_{|I_d(\varepsilon)}\longrightarrow \scr{L}_{|I_d(\varepsilon)}\]
    satisfying $\gr(\eta_d)=\id$,
    where $I_d(\varepsilon)=\{e^{\i\theta}\in S^1\mid |\theta-d|<(\pi+\varepsilon)/2 \}$.
\end{definition}

\begin{corollary}
    For a $(E,\Xi)\in \dR_{C,\mu}$
    and $d\in\R$,
    $(E,\Xi)$
    is summable along $d\in \R$
    if and only if 
    $\DR^\mu(E,\Xi)$
    is summable along $d$. \qed
\end{corollary}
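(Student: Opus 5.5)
The plan is to derive the corollary directly from Proposition \ref{summability}, applied with $\scr{L}=\DR^\mu(E,\Xi)$ and $I=I_d(\varepsilon)$, together with the classical characterization of $1$-summability in Theorem \ref{summable}. By Definition \ref{DefSummable}, $\DR^\mu(E,\Xi)$ is summable along $d$ exactly when a splitting $\eta_d$ with $\gr(\eta_d)=\id$ exists on some arc $I_d(\varepsilon)$. The arc $I_d(\varepsilon)$ has opening $\pi+\varepsilon>\pi$. So it suffices to show the following: $\widehat{\Xi}$ is summable along $d$ if and only if there exist $\varepsilon>0$ and an analytic lift $\Xi_{I}$ on $I=I_d(\varepsilon)$ satisfying $\Xi_I\circ\Xi_k^{-1}\in H^0(I\cap I_k,\aut^{<0}_\mu(E^0))$ for all $k$.

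For the direction from summability to the lift, I apply Theorem \ref{summable} entrywise to the matrix of $\widehat{\Xi}$ in bases of $E$ and $E^0$. After clearing a pole factor $z^m$, these entries lie in $\C\jump{z}_1$. This yields a holomorphic $\Xi_I$ on a sector of opening $>\pi$ bisected by $d$, with $\asy(\Xi_I)=\widehat{\Xi}$. Invertibility follows by also summing $\widehat{\Xi}^{-1}$, since $\cal{S}_d$ is multiplicative.

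It remains to check the compatibility with each $\Xi_k$. The quotient $g_k=\Xi_I\circ\Xi_k^{-1}$ is asymptotic to $\id$ on $I\cap I_k$. It intertwines $\nabla^0$ with the connection $\Xi_I\nabla^{k}\Xi_I^{-1}$, and the latter need not equal $\nabla^0$ a priori. This step is the main obstacle: one must show that $g_k$ lies in $\aut^{<0}_\mu(E^0)$ and not merely in $\id+\End(E^0)\otimes\scr{A}^{<0}$. I would handle it by factoring $g_k=(\Xi_I\Xi_{k'}^{-1})(\Xi_{k'}\Xi_k^{-1})$ along a chain of overlaps. The factors $\Xi_{k'}\Xi_k^{-1}$ are in $\aut^{<0}_\mu$ by \eqref{aut}. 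The uniqueness half of Theorem \ref{summable} (Watson's lemma on a sector of opening $>\pi$) should force the Borel-sum lift to agree with any $\aut^{<0}_\mu$-compatible lift on arcs of length $>\pi$. Existence of the latter on ${\mathbb{I}_{d'}}$ for $(C,\mu)$-generic $d'$ near $d$ is supplied by Theorem \ref{local to global split} combined with Proposition \ref{summability}. Gluing these lifts, as in the vanishing argument of Lemma \ref{vanishing A_mu}, should give the required class.

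For the converse, suppose a splitting $\eta_d$ exists on $I_d(\varepsilon)$. The ``if'' part of Proposition \ref{summability} gives an analytic lift $\Xi_I=\eta_d^{-1}$ defined on $I_d(\varepsilon)$. Since each $\Xi_k$ has entries in $\scr{A}$ and $g_k$ is asymptotic to $\id$, we get $\asy(\Xi_I)=\widehat{\Xi}$. The entries of $\Xi_I$ therefore belong to $A(S_z(d,\pi+\varepsilon',\rho))$ and have asymptotic expansion equal to the entries of $\widehat{\Xi}$. The first condition of Theorem \ref{summable} now holds, so $\widehat{\Xi}$ is $1$-summable along $d$.
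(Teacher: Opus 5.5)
You reduce to Proposition \ref{summability} with $I=I_d(\varepsilon)$. Your converse direction (a splitting gives a compatible lift on an arc of opening $\pi+\varepsilon$ with expansion $\widehat{\Xi}$, hence summability by Theorem \ref{summable}) is fine. Both are what the paper relies on when it states the corollary without proof.

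You are also right that the forward direction needs more than Theorem \ref{summable}. The Borel sum $\Xi_I$ only satisfies $\Xi_I\circ\Xi_k^{-1}\in\id+\End(E^0)\otimes\scr{A}^{<0}$, while Proposition \ref{summability} requires $\aut^{<0}_\mu(E^0)$. The paper does not address this point. However, your way of closing it does not work, for three reasons.
\begin{enumerate}
\item Factoring $g_k$ through $\Xi_{k'}$ only moves the problem to $\Xi_I\circ\Xi_{k'}^{-1}$.
\item Watson's lemma needs opening $>\pi$. Theorem \ref{local to global split} gives compatible lifts $\Xi^{d'}$ only on $\mathbb{I}_{d'}$, of opening exactly $\pi$. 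There lifts are not unique, since $\exp(-e^{\i d'}/z)$ is a nonzero section of $\scr{A}^{<0}$ over $\mathbb{I}_{d'}$. So nothing forces $\Xi_I=\Xi^{d'}$ on $\mathbb{I}_{d'}$.
\item There is nothing to glue, and the gluing step is circular. For generic $d'$ one has $H^0(\mathbb{I}_{d'},\aut^{<0}_\mu(E^0))=\{\id\}$; this is the uniqueness in Theorem \ref{local to global split}. Hence, for generic $d'<d<d''$, the lifts $\Xi^{d'}$ and $\Xi^{d''}$ combine into a compatible lift on $\mathbb{I}_{d'}\cup\mathbb{I}_{d''}$ if and only if the Stokes automorphism $\Xi^{d'}\circ(\Xi^{d''})^{-1}$ on $\mathbb{I}_{d'}\cap\mathbb{I}_{d''}$ is the identity. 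That is a form of the very splitting you want to prove. Lemma \ref{vanishing A_mu} does not help: it concerns arcs inside a single $\mathbb{I}_d$, and on arcs of opening $>\pi$ the relevant $H^1$ is in general nonzero.
\end{enumerate}

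What is missing is a genuinely new input: summability of $\widehat{\Xi}$ along $d$ must force these Stokes automorphisms to be trivial for generic $d'<d<d''$ near $d$. From the Borel sum you only get a factorization
\[
\Xi^{d'}\circ(\Xi^{d''})^{-1}=A_{d'}^{-1}A_{d''},\qquad A_{d'}=\Xi_I\circ(\Xi^{d'})^{-1}\in\id+\End(E^0)\otimes\scr{A}^{<0}\ \text{on } \mathbb{I}_{d'},
\]
and similarly for $d''$. You would need to show that a nontrivial section of $\aut^{<0}_\mu(E^0)$ cannot be factored this way. Equivalently, the map $H^1(I,\aut^{<0}_\mu(E^0))\to H^1(I,\id+\End(E^0)\otimes\scr{A}^{<0})$ must have trivial kernel for arcs $I$ of opening $>\pi$. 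This is a resurgence-type statement. One would expect each nonzero term $e^{\mu(\gamma)/z}$ times a flat section in such a Stokes automorphism to produce a singularity of $\widehat{\mathcal{B}}(\widehat{\Xi})$ in a direction close to $d$, for instance via a Cauchy--Heine integral. Neither Theorem \ref{summable}, Proposition \ref{summability}, nor anything else in the paper supplies it.
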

We remark that if the rank of $L$ is one, any object in $\dR_{C,\mu}$ is summable along any $(C,\mu)$-generic direction $d$ by
the proof of Theorem \ref{local to global split}.

\subsection{Duality and solution functor}
This subsection is only used in \S \ref{Comp1} and is not required for the proof of Theorem \ref{main theorem} (see \S \ref{PROOF}).

Let $j_z^*\colon {\C\conv{z}}\to {\C\conv{z}}$ be an 
automorphism given by $j_z^*(f)(z)=f(-z)$.
For an ${\C\conv{z}}$-vector space $E$,
let $j_z^*E$ be the set $E$ 
with action $f\star v\coloneqq j^*(f)v$
for $f\in {\C\conv{z}}$ and $v\in E$.
 We set $E^\vee=\Hom_{{\C\conv{z}}}(E,{\C\conv{z}})$
and $\mathbb{D}_z(E)\coloneqq j_z^*(E^\vee)=(j^*_zE)^\vee$. A ${\C\conv{z}}$-linear map 
$f\colon E\to F$
induces $j_z^*(f)\colon j_z^*E\to j_z^*F$,
$f^\vee\colon F^\vee\to E^\vee $,
and $\mathbb{D}_z(f)
\colon \mathbb{D}_z(F)\to \mathbb{D}_z(E)$
in a natural way.
If $(E^0,\nabla^0)$ is a meromorphic connection,
then $\mathbb{D}_z(E^0)$ has a natural connection 
$\mathbb{D}_z(\nabla^0)$. 
We set $\mathbb{D}_z(E^0,\nabla^0)=(\mathbb{D}_zE^0,\mathbb{D}_z\nabla^0)$.
\begin{definition}
    We set \[\Sol_z(E^0,\nabla^0)\coloneqq 
    (\scr{H}^0\widetilde{\DR}(\mathbb{D}_z(E^0,\nabla^0)),
    \scr{H}^0\DR_{\leqslant }(\mathbb{D}_z(E^0,\nabla^0))).\] 
\end{definition}

For a formal morphism 
$\widehat{\Xi} \colon E\otimes_\scr{O} \C\jump{z}_1\to F\otimes_{\scr{O}} \C\jump{z}_1$,
we have a morphism 
$\mathbb{D}_z(\widehat{\Xi})\coloneqq j_z^*\widehat{\Xi}^\vee\colon \mathbb{D}_z(F)\otimes_\scr{O}\C\jump{z}_1\to \mathbb{D}_z(E)\otimes\C\jump{z}_1$.

On the boundary circle $S^1$ of $\widetilde{\C}$,
we also have $j_z\colon S^1\to S^1$,
$j_z(e^{\i\theta})=e^{\i (\theta+\pi)}$. 
For a cyclic covering 
$\mathcal{I}=\{I_k\}_{k\in\Lambda_K}$ of $S^1$,
we set $j_z^*\mathcal{I}\coloneqq 
\{j_z(I)\}_{k\in \Lambda_K}$. 
For an $\scr{A}$-module 
$\scr{E}$,
we set $\scr{E}^\vee\coloneqq \homscr(\scr{E},\scr{A})$,
$j^*\scr{E}=\scr{A}\otimes_{j^{-1}\scr{A}}j^{-1} \scr{E}$,
and $\mathbb{D}_z(\scr{E})=j_z^*\scr{E}^\vee$. 
For two $\scr{A}$-modules 
$\scr{E}$ and $\scr{F}$,
an open subset $I\subset S^1$,
and a morphism 
$f_I\colon \scr{E}_{|I}\to
    \scr{F}_{|I}$,
we set $\mathbb{D}_z(f_I)\coloneqq j_z^*f_I^\vee$. 
Let $\mathsf{C}^{\rm op}$
denote the opposite category 
of a category $\mathsf{C}$. 
\begin{definition}
    The duality functor $\mathbb{D}_z\colon \mathsf{dR}_{C,\mu}\to \mathsf{dR}_{C,\mu}^{\rm op}$
    is defined as 
    \begin{align*}
        \mathbb{D}_z((E,\mathcal{I},(\Xi_k)_{k\in \Lambda_K}),(E^0,\nabla^0))\coloneqq 
        ((\mathbb{D}_zE,j_z^*\mathcal{I},(\mathbb{D}_z(\Xi_k)^{-1})_{k\in \Lambda_K}),\mathbb{D}_z(E^0,\nabla^0)),
    \end{align*}
    and $\mathbb{D}_z(f,f^0)\coloneqq 
    (\mathbb{D}_zf,\mathbb{D}_zf^0)$. 
    We also define a solution functor 
    \[\mathrm{Sol}_\mu\colon \mathsf{dR}_{C,\mu}\longrightarrow \mathsf{Be}_{C,\mu}^{\rm op}\]
    by $\mathrm{Sol}_\mu=\DR^\mu\circ
    \mathbb{D}_z$. 
\end{definition} 
\begin{remark}
    It is easy to see that $\mathbb{D}_z\circ\mathbb{D}_z\simeq \id$.
    It is also easy to define a
    functor $\mathbb{D}_z\colon \mathsf{Be}_{C,\mu}\to \mathsf{Be}_{C,\mu}^{\rm op}$
    with a natural isomorphism $\mathrm{Sol}_\mu\simeq \mathbb{D}_z\circ \DR^\mu$. Since these are not
    used in this paper,
    we leave these definitions and proofs to the reader.
\end{remark}

\section{A proof of Theorem $\ref{main theorem}$
}\label{PROOF}
In this section, we prove the Riemann--Hilbert correspondence (Theorem \ref{main theorem}) formulated above. We follow the notation from the previous sections.
\subsection{Internal homomorphisms}
Let $E$
and $F$
be objects in $\mathsf{dR}_{C,\mu}$. 
We use the notation in Definition \ref{dRE}.  
We consider internal homomorphism
\[\hom(E,F)=((\hom_{{\C\conv{z}}}(E,F),\cal{I}'',\Pi),(\hom_{{\C\conv{z}}}(E^0,F^0),\nabla^0)).\]
Here, $\hom_{{\C\conv{z}}}(E,F)$ denotes the ${{\C\conv{z}}}$-vector space of ${{\C\conv{z}}}$-linear maps from $E$ to $F$. 
The connection $\nabla^0$ on $\hom_{{\C\conv{z}}}(E^0,F^0)$
is defined as $\nabla^0(\varphi)=(\nabla^{F^0}\circ \varphi )-(\varphi\circ\nabla^{E^0})$
for $\varphi\in \hom_{{\C\conv{z}}}(E^0,F^0)$. 
$\cal{I''}=(I''_m)_{m\in\Lambda_M}$ denotes a mutual 
refinement of $\mathcal{I}$ and $\cal{I}'$. 
The isomorphism $\Pi=({\Pi}_{m})_{m\in \Lambda_M}$
is defined as  \[{\Pi}_{m}(\psi)={\Theta}_{\mathsf{v}(m)}\circ\psi\circ {\Xi}_{\mathsf{u}(m)}^{-1}\]
for $\psi\in \hom(E,F)\otimes\scr{A}_{|W_m}$
and $m\in \Lambda_M$. 

We have $\hom(E,F)\in \dR_{\Delta(C),\mu}$
for $\Delta(C)=\{c-c'\mid c,c'\in C\}$. 
We note that
$\Delta(C)$ possibly have 
elements $d,d'\in \Delta(C)$ and $\ell\in L$
with $d-d'=\mu(\ell)$. 
However, this does not affect the construction of $\DR^\mu$. Thus, $\DR^\mu(\hom(E,F))$ is well-defined.
\begin{lemma}
    The isomorphism class of $\hom(E,F)$
    does not depend on the choice
    of mutual refinement $\cal{I}''$. \qed
\end{lemma}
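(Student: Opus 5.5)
The plan is to compare two choices of mutual refinement through a common refinement, and to reduce the comparison to the remark after Definition \ref{dRE}, which says that restricting the data of an object to a cyclic refinement does not change its isomorphism class.

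First, the reduction. Let $(\cal{I}''_1,\mathsf{u}_1,\mathsf{v}_1)$ and $(\cal{I}''_2,\mathsf{u}_2,\mathsf{v}_2)$ be two mutual refinements of $\mathcal{I}$ and $\cal{I}'$. Denote the resulting data by $\Pi^{(1)}$ and $\Pi^{(2)}$. Choose a mutual refinement $(\cal{I}''',\mathsf{w}_1,\mathsf{w}_2)$ of $\cal{I}''_1$ and $\cal{I}''_2$, which exists by the remark at the end of \S2. By that remark, the object built from $(\cal{I}''_i,\Pi^{(i)})$ is isomorphic to the object obtained by restricting $\Pi^{(i)}$ to $\cal{I}'''$. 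It therefore suffices to compare two objects on the \emph{same} cyclic cover $\cal{I}'''=(I'''_q)_{q\in\Lambda_Q}$. On $I'''_q$ these are
\[
\Pi^{(i)}_q(\psi)=\Theta_{b_i}\circ\psi\circ\Xi_{a_i}^{-1},\qquad a_i=\mathsf{u}_i\mathsf{w}_i(q),\quad b_i=\mathsf{v}_i\mathsf{w}_i(q).
\]

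Second, the easy case. If $a_1=a_2$ and $b_1=b_2$ for every $q$, the two families coincide. Then $(\id,\id)$ is an isomorphism, since condition \eqref{mor} holds with equality.

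Third, the general case, which is the real content. Since $I'''_q\subset I_{a_1}\cap I_{a_2}$, the indices $a_1$ and $a_2$ are equal or adjacent. Refining $\cal{I}'''$ further if necessary, I may assume this, and similarly for $b_1,b_2$. In the adjacent case, \eqref{aut} gives
\[
\Pi^{(2)}_q=\mathrm{Ad}(g_q,h_q)\circ\Pi^{(1)}_q,
\]
where $g_q=\Xi_{a_1}\Xi_{a_2}^{-1}$ and $h_q=\Theta_{b_2}\Theta_{b_1}^{-1}$ are sections of $\aut^{<0}_\mu$ of $E^0$ and $F^0$ respectively. Hence $\psi\mapsto h_q\psi g_q$ is a section of $\aut^{<0}_\mu(\hom(E^0,F^0))$, using the multiplicativity $\scr{A}_\mu^{\leqslant c}\scr{A}_\mu^{\leqslant c'}\subset\scr{A}_\mu^{\leqslant c+c'}$ and the definition of $\scr{E}nd^{<0}_\mu$.

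My plan is to choose $\cal{I}'''$ so that this ambiguity disappears. The idea is to subdivide each overlap $I_k\cap I_{k+1}$ and $I'_\ell\cap I'_{\ell+1}$ finely. Any two choices of the index maps $\mathsf{u},\mathsf{v}$ on such a fine cover should then be compatible up to reindexing, so that on each small arc one can pick a single index $a(q)$ that works for both families. If this can be arranged, the second step applies.

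The main obstacle is that this cannot be arranged uniformly. On an arc lying in $I_k\cap I_{k+1}$ the two refinements may genuinely pick different indices, and then \eqref{mor} with $(\id,\id)$ fails on the nose. If the fine-subdivision argument fails, I would fall back on the following. The two objects have identical $\DR^\mu$ by Lemma \ref{fildef}. They also differ only by the cocycle $(g_q,h_q)$, which takes values in $\aut^{<0}_\mu$ and is a coboundary in the nonabelian $H^1$. From this one obtains an isomorphism $(\id,\id)$ in the sense that the families agree up to this coboundary. I would then check that the definition of morphisms in $\dR_{C,\mu}$ is meant to allow exactly this freedom, namely equality of the induced Stokes data rather than literal equality of the $\Xi$'s.
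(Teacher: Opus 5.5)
Your reduction to a common cover $\mathcal{I}'''$ is correct. So is the computation $\Pi^{(2)}_q\circ(\Pi^{(1)}_q)^{-1}=(\phi\mapsto h_q\phi g_q)$, which is a section of $\aut^{<0}_\mu(\hom(E^0,F^0))$. (Adjacency of $a_1,a_2$ is automatic from cyclicity, so no further refinement is needed.)

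\textbf{The gap is Step 3, and it cannot be closed as you hope.} Your fallback gives an ``isomorphism $(\id,\id)$ up to a coboundary''. That is not a morphism of $\dR_{C,\mu}$, because \eqref{mor} is a literal equality. Neither a finer subdivision nor a cleverer $(f,f^0)$ can rescue it. Here is a concrete obstruction.
\begin{itemize}
\item Take $F=E$ with $K\geq 4$, and let $\mathcal{I}''_1=\mathcal{I}$ with $\mathsf{u}_1=\mathsf{v}_1=\id$.
\item Build $\mathcal{I}''_2$ by cutting $I_k$ and $I_{k+1}$ back inside $I_k\cap I_{k+1}$ until they are disjoint. Then insert an arc $J\subset I_k\cap I_{k+1}$ bridging the gap, with $(\mathsf{u}_2(J),\mathsf{v}_2(J))=(k,k+1)$.
\item Suppose $(f,f^0)$ is an isomorphism. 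Near a point lying only in $I_k$, every common refinement compares $\Pi^{(1)}_k$ with itself. Hence $\Pi^{(1)}_k\circ f\circ(\Pi^{(1)}_k)^{-1}=f^0$ on a subarc, and so on all of $I_k$ by the identity theorem. The same holds for $k+1$.
\item Put $g=\Xi_{k+1}\Xi_k^{-1}$. Then $\Pi^{(2)}_J\circ(\Pi^{(1)}_k)^{-1}$ is $\phi\mapsto g\phi$, and $\Pi^{(2)}_J\circ(\Pi^{(1)}_{k+1})^{-1}$ is $\phi\mapsto\phi g$.
\item Near a point lying only in $J$, you are therefore forced to have $g\,f^0(\phi)=f^0(\phi)$ or $f^0(\phi)\,g=f^0(\phi)$ for all $\phi$. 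Since $f^0$ is bijective, this gives $g=\id$.
\end{itemize}
So whenever the gluing datum on $I_k\cap I_{k+1}$ is nontrivial, the two objects are not isomorphic in $\dR_{C,\mu}$ as literally defined. The obstruction you noticed is real, not an artifact of your subdivision.

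What is missing is a precise statement of what ``isomorphism class'' means. The paper states the lemma with $\qed$ and no argument, so it does not supply one either. There are two workable options.
\begin{itemize}
\item The true statement, and all the paper actually uses in Theorem \ref{homiso} and \S\ref{FF}, is this: the underlying formal data and the filtered subsheaves $\DR^\mu_{\leqslant c}(\hom(E,F))\subset\hom_{\C\conv{z}}(E,F)\otimes\widetilde{\scr{O}}$ do not depend on the choice. This follows from your $h_q\phi g_q$ computation exactly as in Lemma \ref{fildef}, because sections of $\aut^{<0}_\mu$ preserve each $\DR^\mu_{\leqslant c}(\hom(E^0,F^0))$.
\item Equivalently, the two objects become isomorphic once morphisms of $\dR_{C,\mu}$ are allowed to absorb $\aut^{<0}_\mu$-valued $0$-cochains, i.e.\ once \eqref{mor} is relaxed to hold up to such cochains rather than as a strict equality.
\end{itemize}
You need to choose one of these, state it precisely, and prove it, instead of deferring to what the definition is ``meant to allow''.
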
 

\begin{theorem}\label{homiso}
    We have a natural isomorphism
    \begin{align*}    \Hom_{\mathsf{dR}} (E,F)\simeqto 
    H^0(S^1,\DR^\mu_{\leqslant0}(\hom(E,F))).
    \end{align*}
\end{theorem}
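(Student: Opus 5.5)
We construct the map explicitly and then check that it is bijective. Let $f=(f,f^0)\in\Hom_{\dR}(E,F)$. View $f$ as a $\C\conv{z}$-linear element of $\hom_{\C\conv{z}}(E,F)$, hence as a section of $\hom(E,F)\otimes\scr{A}$ over $S^1$. On each $I''_p$ of the mutual refinement, condition \eqref{mor} gives $\Pi_p(f)=\Theta_{\mathsf{v}(p)}\circ f\circ\Xi_{\mathsf{u}(p)}^{-1}=f^0$. The map $f^0$ is flat, i.e. $\nabla^0 f^0=0$ on $\hom(E^0,F^0)$, and it has no exponential factor. So $f^0\in H^0(S^1,\scr{H}^0\DR_{\leqslant 0}(\hom(E^0,F^0)))$. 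Hence $f=\Pi_p^{-1}(f^0)$ is a flat section of $(\hom(E,F)\otimes\scr{A}_{|I''_p},\nabla^p)$ of growth $\leqslant 0$. By Lemma \ref{fildef}, these local descriptions agree as subsheaves of $\hom(E,F)\otimes\widetilde{\scr{O}}$. Therefore $f$ defines a global section of $\DR^\mu_{\leqslant 0}(\hom(E,F))$. Naturality follows from the computation in Lemma \ref{natural}, and injectivity is clear because the section determines $f$, and $f^0=\Pi_p(f)$.

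For surjectivity, take a global section $s\in H^0(S^1,\DR^\mu_{\leqslant0}(\hom(E,F)))\subset\hom(E,F)\otimes\widetilde{\scr{O}}(S^1)$. On $I''_p$, apply $\Pi_p$ to get $\sigma_p\coloneqq\Pi_p(s)\in H^0(I''_p,\DR^\mu_{\leqslant0}(\hom(E^0,F^0)))$. Since $\hom(E^0,F^0)$ is elementary exponential, decompose it as $\bigoplus_{c,c'}\hom(E^0_{c},F^0_{c'})$ with exponential factor $c'-c$. The stalks of $\DR^\mu_{\leqslant0}$ are sums of terms $\scr{A}_\mu^{\leqslant a}\cdot\exp((c'-c)/z)\cdot(\text{regular singular flat sections})$ with $a+(c'-c)\leqslant_\theta 0$.

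The key step is to show that the "graded" part of $\sigma_p$ does not depend on $p$ and is a global flat morphism $f^0$. This works as follows.

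\textbf{Graded part.} Project $\sigma_p$ to $\gr_0$. Because elements of $C$ lie in distinct $L$-orbits, only $c=c'$ and $\gamma=0$ contribute to $\gr_0$, and there the contribution is a flat section of $\hom(E^0_c,F^0_c)$ with constant coefficient. These projections glue, since $\Pi_p\Pi_{p+1}^{-1}$ is of the form $\id+(\text{strictly negative})$ by \eqref{aut}, and this acts trivially on $\gr$.

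\textbf{Globality and regularity of $f^0$.} A flat section of a regular singular connection that is globally defined on $S^1$ is monodromy invariant. Hence it comes from an element $f^0\in\Hom(E^0,F^0)^{\nabla}$ that is meromorphic, i.e. lies in $\C\conv{z}$.

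\textbf{Vanishing of the remainder.} Set $r_p\coloneqq\sigma_p-f^0$, a section of $\DR^\mu_{<0}$ over $I''_p$. We must show $r_p=0$. For each $p$, $s=\Pi_p^{-1}(f^0)+\Pi_p^{-1}(r_p)$. Compare with $t\coloneqq\Theta_{\mathsf{v}}^{-1}f^0\Xi_{\mathsf{u}}$, which is defined sector by sector. The difference $s-t$ is a $\nabla$-flat section (for the local connection) lying in the $<0$ part on each $I''_p$. By the asymptotic expansion (asy is a ring map, and the sections of $\scr{A}_\mu^{<0}$ are asymptotic to $0$), $\asy(s)=\widehat{\Theta}^{-1}f^0\widehat{\Xi}$. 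Since $s$ is single valued, $s$ lies in $\hom(E,F)\otimes\widetilde{\scr{O}}$ globally and $\asy(s)$ is a constant formal series. I expect this to be the main obstacle. I would show that $s$ actually belongs to $\hom_{\C\conv{z}}(E,F)$: a global section of $\widetilde{\scr{O}}$ that has moderate growth in every direction and a Gevrey asymptotic expansion is, by Riemann's removable singularity theorem, meromorphic at $0$, hence in $\C\conv{z}$.

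With $s=:f$ convergent, $\Theta_{\mathsf{v}(p)}f\Xi_{\mathsf{u}(p)}^{-1}=\sigma_p$ is a flat section of $\hom(E^0,F^0)\otimes\scr{A}$ on $I''_p$, since it is simultaneously of the form $f^0+r_p$. By the classical Hukuhara--Turrittin uniqueness, a flat section of the elementary exponential $\hom(E^0,F^0)$ with values in $\scr{A}$ and with no component in the rapid-decay direction across a whole sector is formally determined; equivalently, the $\exp((c'-c)/z)\scr{A}_\mu^{\leqslant a}$ terms with $a+(c'-c)<0$ must vanish after matching $\asy(\sigma_p)=f^0$. To confirm this, apply the cocycle relation across overlaps together with $H^0(\mathbb{I}_d,\scr{L}_{<0})=0$ for generic $d$, as at the end of the proof of Theorem \ref{local to global split}; this forces $r_p=0$. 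Thus $(f,f^0)$ satisfies \eqref{mor}, and it maps to $s$.
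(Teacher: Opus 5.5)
\textbf{Gap: the final step $r_p=0$ fails, and it cannot be repaired with (mor) read literally.}

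Several of your steps match the paper's proof exactly. These are:
\begin{itemize}
\item the forward map and injectivity;
\item gluing the $\gr_0$-parts, using that elements of $C$ lie in distinct $L$-orbits and that $\aut^{<0}_\mu$ acts trivially on $\gr$;
\item identifying the glued part with a flat $f^0\in\Hom(E^0,F^0)^{\nabla^0}$;
\item meromorphy of $s$, as a global section of moderate growth.
\end{itemize}

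None of the arguments you give for $r_p=0$ works. First, $\sigma_p=\Pi_p(s)$ is not $\nabla^0$-flat. Sections of $\DR^\mu_{\leqslant 0}$ carry coefficients such as $\exp(\mu(\gamma)/z)$, which are not horizontal, so no Hukuhara--Turrittin uniqueness applies. Second, $\asy$ vanishes on $\scr{A}^{<0}$, so matching $\asy(\sigma_p)=f^0$ says nothing about $r_p$. Third, the vanishing $H^0(\mathbb{I}_d,\DR^\mu_{<0}(\hom(E^0,F^0)))=0$ concerns the whole half-circle $\mathbb{I}_d$ with $d$ generic. On a short arc $I''_p$ the group $H^0(I''_p,\DR^\mu_{<0}(\hom(E^0,F^0)))$ is in general nonzero.

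What that vanishing does give is the paper's route. Let $\Xi^d,\Theta^d$ be the unique lifts over $\mathbb{I}_d$ from Theorem \ref{local to global split} and Proposition \ref{summability}. Then $\Theta^d\circ s\circ(\Xi^d)^{-1}=f^0$ on $\mathbb{I}_d$, because the left side lies in $\DR^\mu_{\leqslant0}$ over $\mathbb{I}_d$ and has $\gr_0$ equal to $f^0$. However, $\Xi_{\mathsf{u}(p)}$ and $\Theta_{\mathsf{v}(p)}$ differ from $\Xi^d$ and $\Theta^d$ by sections of $\aut^{<0}_\mu$ that are neither flat nor trivial. So this identity does not transfer to \eqref{mor}.

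In fact $r_p=0$ is false in general, and the paper's ``which implies the claim'' has the same gap. Here is a counterexample.
\begin{itemize}
\item Take $L=\Z$, $\mu(n)=2\pi\i n$, $C=\{0\}$, $E=F=\C\conv{z}$ and $E^0=F^0=(\C\conv{z},d)$.
\item Take a cyclic covering $(I_k)_{k\in\Lambda_4}$ with $I_0\subset\{e^{\i\theta}\mid \sin\theta<0\}$.
\item Set $\Xi_k=1$ for all $k$, and $\Theta_0=1+e^{2\pi\i/z}$, $\Theta_k=1$ for $k\neq 0$.
\end{itemize}
Since $e^{2\pi\i/z}\in\scr{A}_\mu^{\leqslant 2\pi\i}$ and $2\pi\i<_{I_0}0$, both data are objects of $\dR_{C,\mu}$. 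Multiplication by $\Theta_0$ preserves $\scr{A}_\mu^{\leqslant0}$, so $\DR^\mu_{\leqslant 0}(\hom(E,F))=\scr{A}_\mu^{\leqslant 0}$. Hence $s=1$ is a global section, with $f^0=1$. Yet at points of $I_0$ lying in no other $I_k$ one gets $r_p=e^{2\pi\i/z}\neq0$. Moreover, \eqref{mor} would require $(1+e^{2\pi\i/z})f=f^0$ with $f\in\C\conv{z}$ and $f^0\in\C$. This forces $f^0=f=0$, so $\Hom_{\dR}(E,F)=0$, while the right-hand side of the theorem contains $1$.

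To get a true statement, the morphism condition has to be weakened. For example, require \eqref{mor} for the canonical lifts $\Xi^d,\Theta^d$ over $\mathbb{I}_d$, for generic $d$. With that definition your argument closes once you replace $r_p$ by $\Theta^d\circ s\circ(\Xi^d)^{-1}-f^0$, which lies in $H^0(\mathbb{I}_d,\DR^\mu_{<0}(\hom(E^0,F^0)))=0$.
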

\begin{proof}
    Let $(f,f^{0})$ be an element in 
    $\Hom_{\mathsf{dR}} (E,F)$. 
    By definition,
    there is a mutual  refinement 
    $(\cal{I}'',\mathsf{u},\mathsf{v})$
    of $\mathcal{I}$ and $\cal{I}'$
    such that 
    we have 
    $\Theta_{\mathsf{v}(p)}\circ f=f^0\circ \Xi_{\mathsf{u}(p)},$
    which implies
    \[\nabla^{\Pi,p}(f)=0,\] where
    $\nabla^{\Pi,p}$ denotes the induced connection on $\hom(E,F)_{|I''_p}$.
    Indeed, we can compute
    \begin{align*}
        \nabla^{\Pi,p}(f)&=
        \Pi_p^{-1}\circ \nabla^{0}\circ \Pi_{p}(f)\\
        &=\Pi_p^{-1}(\nabla^0\circ (\Theta_{\mathsf{v}(p)}\circ f\circ \Xi_{\mathsf{u}(p)}^{-1})-
        (\Theta_{\mathsf{v}(p)}\circ f\circ \Xi_{\mathsf{u}(p)}^{-1})\circ \nabla^0)\\
        &=(\Theta_{\mathsf{v}(p)}^{-1}\nabla^0 \Theta_{\mathsf{v}(p)})\circ f - f\circ (\Xi_{\mathsf{u}(p)}^{-1}\nabla^0 \Xi_{\mathsf{u}(p)})\\
        &=\nabla^{\Theta,p}\circ f-f\circ \nabla^{\Xi,p}.
    \end{align*}
    The last term vanishes by the same argument as in the proof of Lemma \ref{natural}.
    
    It implies that \[f_{|I''_p}\in H^0(I''_p,\scr{H}^0{\DR}_{\leqslant 0}(\hom(E,F),\nabla^p)).\]
    Hence it defines a section of 
    $H^0(S^1,\DR^\mu(\hom(E,F))_{\leqslant0})$.

    We shall construct the inverse of 
    this map. 
    Let $g$ be a section of the vector space
    $H^0(S^1,\DR^\mu(\hom(E,F))_{\leqslant0})$.
    Since $g^0_p\coloneqq \Pi_{p}(g_{|I''_p})$
    satisfies $\gr_0(g^0_p)=\gr_0(g^0_{p+1})$
    on $I''_{p,p+1}=I''_p\cap I''_{p+1}$ for any $p\in\Lambda_P$, 
    we obtain a global section 
    \[f^0_g\in H^0(S^1,\gr_0\DR^\mu(\hom(E^0,F^0)).\]
    Since we have 
    \begin{align*}
        H^0(S^1,\gr_0\DR^\mu(\hom(E^0,F^0))
        &=H^0(S^1,\scr{H}^0\DR_{\gr_0}(\hom(E^0,F^0))
        \\
        &\simeq H^0(S^1,\scr{H}^0\DR_{\leqslant 0}(\hom(E^0,F^0)))\\
        &\simeq H^0\DR(\hom(E^0,F^0))\\
        &\simeq \Hom (E^0,F^0)^{\nabla^0},
    \end{align*}
    we obtain a flat morphism$f_g^0\in \Hom(E^0,F^0)^{\nabla^0}$. 
    Here, the first equality follows from the following assumptions:
    \begin{itemize}
        \item The exponential factors of $E^0$ and $F^0$ is contained in $C$.
        \item We have 
        $c+\mu(L)\neq c'+\mu(L)$
    for any $c,c'\in C$ with $c\neq c'$. 
    \end{itemize}
    The last three isomorphisms 
    follow from the general theory
    of meromorphic connections \cite{Sabbah}. 
    We then note that 
    $g\in H^0(S^1,\hom(E,F)\otimes \scr{A}^{\leqslant 0})=\Hom_{{\C\conv{z}}}(E,F)$.

    It remains to show that 
    $\Theta_{\mathsf{v}(p)}\circ g=f_g^0\circ \Xi_{\mathsf{u}(p)}$
    for any $p\in\Lambda_P$.
    By Theorem \ref{local to global split} and Lemma \ref{summability},
    there is a lift $\Xi^d$ and $\Theta^d$ on 
    ${\mathbb{I}_d}$ for every $(C,\mu)$-generic direction
    $d\in\R$. 
    Then, it is easy to see that 
    $\Pi^d(g_{|{\mathbb{I}_d}})=f_{g|{\mathbb{I}_d}}^0$,
    which implies the claim. 
\end{proof}
Similarly, for two objects $\scr{L},\scr{M}$ in $\mathsf{Be}_{C,\mu}$,
the sheaf of homomorphisms \[\homscr(\scr{L},\scr{M})\]
is naturally equipped with a filtration and admits a local splitting.

\begin{lemma}\label{Bettihom}
    For two objects $\scr{L},\scr{M}$ in 
$\mathsf{Be}_{C,\mu}$,
we have 
\begin{align*}
    \mathrm{Hom}_{\mathsf{Be}}(\scr{L},\scr{M})\simeqto H^0(S^1,\homscr(\scr{L},\scr{M})_{\leqslant 0}).
\end{align*}
\end{lemma}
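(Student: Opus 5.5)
The plan is to first make the filtration on $\homscr(\scr{L},\scr{M})$ explicit, and then check that the identification is essentially tautological. On an open $I\subset S^1$, a local homomorphism $\varphi\colon \scr{L}_{|I}\to\scr{M}_{|I}$ of $\scr{A}_\mu$-modules is declared to lie in $\homscr(\scr{L},\scr{M})_{\leqslant c}$ if $\varphi(\scr{L}_{\leqslant c'})\subset \scr{M}_{\leqslant c+c'}$ for every $c'\in\C$, where the inclusion is checked on stalks at each $e^{\i\theta}\in I$. In the stalkwise form of this condition, $\leqslant$ is read as $\leqslant_\theta$, which matches property (a) of filtered modules. Since $u^\gamma$ is invertible and $u^\gamma\scr{L}_{\leqslant c'}=\scr{L}_{\leqslant c'+\mu(\gamma)}$, this family is again compatible with the $\scr{A}_\mu$-filtration and satisfies (b). It is then automatic that $\homscr(\scr{L},\scr{M})_{\leqslant 0}$ is exactly the sheaf of local filtration-preserving $\scr{A}_\mu$-linear morphisms.

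The map $\Hom_{\Be}(\scr{L},\scr{M})\to H^0(S^1,\homscr(\scr{L},\scr{M})_{\leqslant 0})$ sends a morphism to itself, viewed as a global section. It is injective, because a morphism of sheaves is determined by its restrictions. For surjectivity, a global section of $\homscr(\scr{L},\scr{M})_{\leqslant 0}$ is a family of local $\scr{A}_\mu$-linear maps that agree on overlaps. These glue, since $\homscr$ is a sheaf, to a global $\scr{A}_\mu$-linear map. That map preserves the filtration because the inclusions $\varphi(\scr{L}_{\leqslant c'})\subset\scr{M}_{\leqslant c'}$ are stalkwise conditions. Hence it is a morphism in $\Be_{C,\mu}$. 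Both constructions are clearly inverse to each other and natural in $\scr{L}$ and $\scr{M}$.

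The only genuinely non-formal point concerns the sentence preceding the lemma, namely that $\homscr(\scr{L},\scr{M})$ with this filtration is locally split (so that its $\leqslant0$ part is a reasonable object). I would handle it via the splittings $\eta_I$ of \eqref{splitting}. For $\scr{L}$ and $\scr{M}$ these identify $\homscr(\scr{L},\scr{M})_{|I}$ with $\homscr(\scr{A}_\mu\otimes\gr\scr{L},\scr{A}_\mu\otimes\gr\scr{M})_{|I}$. Since each $\gr_c$ is a local system and there are only finitely many $L$-orbits of jumping indices, the latter is $\scr{A}_\mu\otimes_{\C[L]}\bigoplus_{c}\homscr_{\C[L]}(\gr\scr{L},\gr\scr{M})$ graded by $c$. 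The filtered pieces are then $\sum_{c'+c''\leqslant c}\scr{A}_\mu^{\leqslant c'}\cdot(\text{degree } c'')$.

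I expect the main obstacle to be checking this last equality stalkwise. The issue is that a $\C[L]$-linear map between the graded pieces, multiplied by an element of $\scr{A}_\mu^{\leqslant c'}$, must send $\scr{L}_{\leqslant b}$ into $\scr{M}_{\leqslant b+c}$ exactly when the total order is $\leqslant_\theta c$. Conversely, every filtration-preserving map must decompose in this way. I would verify this on generators $u^\gamma\otimes v$ using $\scr{A}_\mu^{\leqslant a}\scr{A}_\mu^{\leqslant b}\subset\scr{A}_\mu^{\leqslant a+b}$, and for the converse use the projection to $\gr$. Only the $\leqslant0$ part is needed for the lemma itself, and for that part the tautological argument of the previous paragraph already suffices.
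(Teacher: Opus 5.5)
Your proposal is correct and takes the same approach as the paper, whose proof is simply ``Straightforward''. Once $\homscr(\scr{L},\scr{M})_{\leqslant c}$ is defined by $\varphi(\scr{L}_{\leqslant c'})\subset\scr{M}_{\leqslant c+c'}$ stalkwise, the $\leqslant 0$ part is by definition the sheaf of local filtration-preserving $\scr{A}_\mu$-linear maps, so its global sections are exactly the morphisms in $\Be_{C,\mu}$, as you show in your first two paragraphs; your discussion of the local splitting of $\homscr$ is not needed here (it matters instead for the comparison with $\DR^\mu(\hom(E,F))$ in \S\ref{FF}), as you yourself note.
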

\begin{proof}
    Straightforward. 
\end{proof}
\subsection{Full and faithful}\label{FF}
Let $E=(E,{\Xi}),F=(F,{\Theta})$ be objects in $\mathsf{dR}_{C,\mu}$. 
We have a natural morphism between internal homs:
\[\DR^\mu(\hom(E,F))\to \homscr(\DR^\mu(E),\DR^\mu(F)).\]
Locally on ${\mathbb{I}_d}$, this is identified with the map
\[\scr{A}_\mu \widetilde{\DR}(\hom(E^0,F^0))\to \scr{A}_\mu \homscr(\widetilde{\DR}(E^0),\widetilde{\DR}(F^0))\]
via $\Pi^p$. This local map is known to be an isomorphism \cite{Sabbah}. 
Since this isomorphism preserves the filtration, taking global sections with  $\leqslant 0$ yields
\begin{align*}
    H^0(S^1,\DR^\mu_{\leqslant 0}(\hom(E,F)))
    \simeqto
    H^0(S^1,\homscr(\DR^\mu(E),\DR^\mu(F))_{\leqslant 0}).
\end{align*}

By Theorem \ref{homiso} and Lemma \ref{Bettihom}, this induces an isomorphism
\begin{align*}
    \Hom_{\mathsf{dR}_{C,\mu}}(E,F)\xrightarrow{\sim} \Hom_{\mathsf{Be}_{C,\mu}}(\DR^\mu(E),\DR^\mu(F)).
\end{align*}
This proves that the functor $\DR^\mu$ is fully faithful.

\subsection{Essential surjectivity}\label{EssS}
We set $\scr{E}nd^{<0}(\scr{L})\coloneqq \homscr(\scr{L},\scr{L})_{<0}$
for $\scr{L}\in\Be_{C,\mu}$.
\begin{theorem}\label{Classify}
    Let $\scr{G}$ be a graded
    $\scr{A}_\mu$-module in $\mathsf{Be}_{C,\mu}$. 
    Then there is a canonical one-to-one correspondence between:
    \begin{itemize}
        \item the set of isomorphism classes of  
        pairs $((\scr{L},\scr{L}_{\leqslant 0}),\xi)$ of an object $(\scr{L},\scr{L}_{\leqslant 0})$ in 
        ${\mathrm{Be}}_{C,\mu}$
     and an isomorphism 
     \[\xi\colon \scr{A}_\mu\otimes\gr\scr{L}
     \simeqto \scr{G}\]
     of filtered $\scr{A}_\mu$-modules, and 
        \item the cohomology set 
    \begin{align*}
        H^1({S}^1,\aut^{<0}_{\mu}(\scr{G})),
    \end{align*}
    where we put 
    $\aut^{<0}_{\mu}(\scr{G})
    =\id_\scr{G}
    +\scr{E}nd^{<0}(\scr{G})$. 
    \end{itemize}
\end{theorem}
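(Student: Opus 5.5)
The plan is the standard torsor argument used for Stokes-filtered local systems (Malgrange--Sibuya type classification, cf.\ \cite{Sabbah}), carried out with the coefficient sheaf $\scr{A}_\mu$ in place of $\C$. We first construct a map from pairs to the cohomology set. Given $((\scr{L},\scr{L}_{\leqslant}),\xi)$, condition (c) of Definition \ref{DefBe} provides a covering of $S^1$ by open arcs $I_\alpha$ together with filtered isomorphisms $\eta_\alpha\colon (\scr{A}_\mu\otimes\gr\scr{L})_{|I_\alpha}\simeqto \scr{L}_{|I_\alpha}$ satisfying $\gr(\eta_\alpha)=\id$. Refine this covering to a cyclic covering and put $\lambda_\alpha\coloneqq \eta_\alpha\circ\xi^{-1}\colon \scr{G}_{|I_\alpha}\to\scr{L}_{|I_\alpha}$. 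On overlaps set $g_{\alpha\beta}=\lambda_\alpha^{-1}\lambda_\beta$. This is a filtered automorphism of $\scr{G}$ with $\gr(g_{\alpha\beta})=\id$, so $g_{\alpha\beta}-\id$ maps $\scr{G}_{\leqslant c}$ into $\scr{G}_{<c}$ for every $c$. The first thing to verify is that this condition is exactly membership in $\scr{E}nd^{<0}(\scr{G})=\homscr(\scr{G},\scr{G})_{<0}$. Granting that, $(g_{\alpha\beta})$ is a \v{C}ech $1$-cocycle with values in $\aut^{<0}_\mu(\scr{G})$.

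Next we check well-definedness. A different choice of local splittings $\eta'_\alpha$ changes $\lambda_\alpha$ to $\lambda_\alpha h_\alpha$ with $h_\alpha=\lambda_\alpha^{-1}\lambda'_\alpha\in\aut^{<0}_\mu(\scr{G})(I_\alpha)$, again because the graded part is the identity, and so the cocycle changes by a coboundary. An isomorphism of pairs $\phi\colon \scr{L}\to\scr{L}'$ compatible with $\xi,\xi'$ sends the splittings of $\scr{L}$ to splittings of $\scr{L}'$ and therefore gives the same cocycle. Passage to common refinements is handled in the usual way. This defines the map $\Phi$ to $H^1(S^1,\aut^{<0}_\mu(\scr{G}))$.

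For the inverse, take a cocycle $(g_{\alpha\beta})$ on a cyclic covering and glue the sheaves $\scr{G}_{|I_\alpha}$ along the maps $g_{\alpha\beta}$. Since each $g_{\alpha\beta}$ is $\scr{A}_\mu$-linear and filtered, the result is a filtered $\scr{A}_\mu$-module $\scr{L}$. Because $\gr(g_{\alpha\beta})=\id$, the graded pieces glue to $\gr\scr{G}$. This gives properties (a) and (b) of Definition \ref{DefBe}, the local splittings (c), and a canonical isomorphism $\xi\colon\scr{A}_\mu\otimes\gr\scr{L}\simeq\scr{A}_\mu\otimes\gr\scr{G}\simeq\scr{G}$, the last step because $\scr{G}$ is graded. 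The two constructions are mutually inverse by the standard comparison of gluing data with torsors.

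The step I expect to be the main obstacle is the identification of $\aut^{<0}_\mu(\scr{G})$ with the sheaf of filtered automorphisms inducing the identity on $\gr$. This needs both directions. In one direction, an element of $\homscr(\scr{G},\scr{G})_{<0}$ should strictly lower the filtration. In the other, an endomorphism with $\phi(\scr{G}_{\leqslant c})\subset\scr{G}_{<c}$ for all $c$ should lie in $\homscr(\scr{G},\scr{G})_{<0}$ stalkwise. The plan is to work locally on a small arc $I$ where $\scr{G}\simeq\scr{A}_\mu\otimes_{\C[L]}\gr\scr{G}$, with $\gr\scr{G}$ free over $\C[L]$ with basis indexed by representatives $-c\in -C$. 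There an endomorphism is a matrix with entries in $\scr{A}_\mu$. The filtration condition says that the $(c,c')$ entry lies in $\scr{A}_\mu^{<c-c'}$. That is exactly the description of $\homscr(\scr{G},\scr{G})_{<0}$, provided one checks $\scr{A}_\mu^{<a}=\sum_{a'<a}\scr{A}_\mu^{\leqslant a'}$ at stalks. This last identity follows from the explicit series description of $\scr{A}_\mu^{\leqslant 0}$ and the support property \eqref{support property}, arguing as in the proof of the convergence lemma in \S\ref{Amu}. Invertibility of $\id+\phi$ for such $\phi$ is then automatic: $\phi$ is strictly filtration-decreasing, and on each stalk only finitely many of the exponents $c\in C$ are involved, so the inverse can be constructed stalkwise from the resulting ordering of these exponents.
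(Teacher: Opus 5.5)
Your overall strategy is the standard Stokes-torsor argument that the paper invokes (its proof only says the result is analogous to the classical case): local splittings give a \v{C}ech cocycle $g_{\alpha\beta}=\lambda_\alpha^{-1}\lambda_\beta$, and conversely one glues copies of $\scr{G}$. Your matrix description of $\scr{E}nd^{<0}(\scr{G})$ on a split arc is also fine. In fact the stalk identity $\scr{A}_\mu^{<a}=\sum_{a'<_\theta a}\scr{A}_\mu^{\leqslant a'}$ follows directly from $\scr{A}_\mu^{\leqslant c}=\scr{A}_\mu\cap\scr{A}^{\leqslant c}$. The index set is all of $\C$, and an exponentially flat germ at $e^{\i\theta}$ lies in $\scr{A}^{\leqslant -\delta e^{\i\theta}}$ for small $\delta>0$, so the support property is not needed there. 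One small point: the isomorphism $\scr{A}_\mu\otimes\gr\scr{G}\simeq\scr{G}$ you use to build $\xi$ is not canonical. It must be normalized to induce the identity on $\gr$, otherwise the glued local splittings do not satisfy $\gr(\eta_\alpha)=\id$.

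The genuine gap is your final claim that invertibility of $\id+\phi$ for $\phi\in\scr{E}nd^{<0}(\scr{G})$ is automatic. That claim is the classical nilpotency argument, and it fails in this setting for two reasons. First, the exponents of $\scr{G}$ form the infinite set $-C+\mu(L)$, not the finite set $C$. Second, the diagonal blocks of $\phi$ may have arbitrary entries in $\scr{A}_\mu^{<0}$, which is nonzero as soon as $L\neq 0$.

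A concrete counterexample: take $C=\{0\}$, $\scr{G}=\scr{A}_\mu$ and $\phi=u^\gamma$ with $\mu(\gamma)<_\theta 0$. By condition (b) on filtrations, $\phi$ has filtration degree $\mu(\gamma)<_\theta 0$, so $\phi\in\scr{E}nd^{<0}(\scr{G})_{e^{\i\theta}}$. But $\phi^n=u^{n\gamma}\neq 0$ for every $n$, so no ordering of exponents produces a finite inverse. The inverse is the infinite series $\sum_{n\ge 0}(-1)^n u^{n\gamma}$, and it lies in $\scr{A}_\mu^{\leqslant 0}$ only because the coefficient condition $\|\cdot\|_{I,\varrho}<\infty$ (with $\varrho<1$) is satisfied.

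In general you need to show that the Neumann series $\sum_n(-\phi)^n$ converges. You also need its sum to lie again in $\id+\scr{E}nd^{<0}(\scr{G})$, meaning its entries are convergent exponential series with exponents in the correct half-planes and coefficient bounds of the required type. This requires a genuine estimate: use the support property \eqref{support property} (which makes $\mu(L)$ discrete) and a cone estimate on $\bigl|\sum_k\mu(\gamma_k)\bigr|$. These give a uniform exponential gain at each non-constant step of a product, while steps with constant entries can occur only a bounded number of times in a row. This is the kind of argument behind Corollary~\ref{Appcor}.

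This estimate is exactly what makes $\aut^{<0}_\mu(\scr{G})$ a sheaf of groups, and it gives the inclusion you need, namely that every $\id+\phi$ is a filtered automorphism inducing $\id$ on $\gr$. Alternatively, you can take the group property as given, since the statement presupposes it when it writes $H^1(S^1,\aut^{<0}_\mu(\scr{G}))$ and the paper asserts it without proof. Then the rest of your argument goes through, but the justification you wrote for invertibility is incorrect.
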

\begin{proof}
     The proof is completely analogous to that for standard Stokes-filtered local systems.
\end{proof}
\begin{theorem}[Essential surjectivity]
    For any object $\scr{L}\in\mathsf{Be}_{C,\mu}$,
    there exists an object
    $(E,\Xi)=((E,\mathcal{I},\Xi),(E^0,\nabla^0))\in {\dR}_{C,\mu}$
    such that ${\DR}^\mu(E,\Xi)\simeq \scr{L}$.
\end{theorem}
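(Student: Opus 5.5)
The plan is to build $(E,\Xi)$ in three steps. First, read off the elementary exponential connection $(E^0,\nabla^0)$ from $\gr\scr{L}$. Second, encode $\scr{L}$ as a Stokes cocycle using Theorem \ref{Classify}. Third, split that cocycle with a Gevrey version of the Malgrange--Sibuya theorem.

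\emph{Step 1: the formal model.} By condition (b) of the filtration, $u^\gamma\colon \gr_c\scr{L}\simeqto\gr_{c+\mu(\gamma)}\scr{L}$. Since the elements of $C$ represent distinct $\mu(L)$-orbits, $\gr\scr{L}$ as a $\C[L]$-module is determined by the local systems $\gr_{-c}\scr{L}$ for $c\in C$. For each $c\in C$:
\begin{itemize}
\item choose a regular singular connection $R_c$ whose solution local system (horizontal sections) on $S^1$ is $\gr_{-c}\scr{L}$;
\item twist it by the rank one connection with exponential factor $c$, with the sign fixed so that its horizontal sections lie in $\scr{A}^{\leqslant -c}$ and span $\gr_{-c}\scr{L}$ there.
\end{itemize}
Let $E^0$ be the direct sum of these twisted connections. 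By construction $\DR^\mu(E^0,\nabla^0)\simeq\scr{A}_\mu\otimes_{\C[L]}\gr\scr{L}=:\scr{G}$ as graded objects of $\Be_{C,\mu}$. This gives the datum $\xi$ required in Theorem \ref{Classify}.

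\emph{Step 2: the Stokes cocycle.} Choose a cyclic covering $\mathcal{I}=(I_k)_{k\in\Lambda_K}$ with each $I_k\subset\mathbb{I}_{d_k}$ for a $(C,\mu)$-generic $d_k$. By Theorem \ref{local to global split} there are unique splittings $\eta_k\colon \scr{G}_{|I_k}\simeqto\scr{L}_{|I_k}$ with $\gr(\eta_k)=\id$. Then
\[
g_k\coloneqq\eta_k^{-1}\circ\eta_{k+1}\in H^0(I_{k,k+1},\aut^{<0}_\mu(\scr{G})),
\]
and these represent the class of $\scr{L}$ in Theorem \ref{Classify}. Next, under $\DR^\mu(E^0)\simeq\scr{G}$, I would identify $\aut^{<0}_\mu(\scr{G})$ with $\aut^{<0}_\mu(E^0)$, acting on $E^0\otimes\scr{A}$. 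This is the filtered local isomorphism of internal homs used in \S\ref{FF}, restricted to the $<0$ parts. Thus each $g_k$ becomes a section of $\aut_\mu^{<0}(E^0)$ on $I_{k,k+1}$. Since $\scr{A}^{\leqslant c}_\mu\cdot\scr{H}^0\DR_{\leqslant c'}\subset E\otimes\scr{A}^{<0}$ whenever $c+c'<_\theta 0$, each $g_k$ lies in $\id+\End(E^0)\otimes\scr{A}^{<0}$.

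\emph{Step 3: splitting the cocycle (main obstacle).} I will invoke the Gevrey-one Malgrange--Sibuya theorem: the natural map
\[
H^1(S^1,\id+\End(E^0)\otimes\scr{A}^{<0})\to H^1(S^1,\mathrm{GL}(E^0\otimes\scr{A}))
\]
is trivial. Concretely, after possibly refining $\mathcal{I}$, there exist $\Xi_k\in\mathrm{GL}(E^0\otimes\scr{A})(I_k)$ with $\Xi_k\circ\Xi_{k+1}^{-1}=g_k$, all sharing a common Gevrey asymptotic expansion $\widehat{\Xi}$. The points to check are:
\begin{itemize}
\item that the classical Malgrange--Sibuya theorem holds in the Gevrey-one class $\scr{A}$, which is standard (cf.\ \cite{Balser2000});
\item that refinement of a cyclic cover does not spoil condition \eqref{aut}.
\end{itemize}
Since $g_k$ is flat-structured only up to $\scr{A}_\mu$ coefficients, I use only its decay, not flatness. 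I then set $E\coloneqq E^0$ as a $\C\conv{z}$-vector space, forgetting the connection. The family $\Xi=(\Xi_k)$ satisfies \eqref{aut}, so $((E,\mathcal{I},\Xi),(E^0,\nabla^0))\in\dR_{C,\mu}$.

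Finally, I compare $\DR^\mu(E,\Xi)$ with $\scr{L}$. On $I_k$ the map $\eta_k\circ\Xi_k\colon\DR^\mu(E,\Xi)_{|I_k}\to\scr{L}_{|I_k}$ is a filtered isomorphism, using the isomorphism $\Xi_k^{-1}$ from the proof of Proposition \ref{split-exsist}. On overlaps,
\[
\eta_k\Xi_k=\eta_{k+1}g_k^{-1}\Xi_k=\eta_{k+1}\Xi_{k+1},
\]
so these maps glue to an isomorphism $\DR^\mu(E,\Xi)\simeq\scr{L}$ in $\Be_{C,\mu}$.
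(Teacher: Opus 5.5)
Your proposal is correct and follows the paper's proof. You build $(E^0,\nabla^0)$ from $\bigoplus_{c\in -C}\gr_c\scr{L}$, identify $\aut^{<0}_\mu(\scr{G})$ with $\aut^{<0}_\mu(E^0)\subset \id+\End(E^0)\otimes\scr{A}^{<0}$, and kill the class of $\scr{L}$ with the Gevrey-one Malgrange--Sibuya theorem. The only differences are cosmetic: you write the cocycle explicitly via the generic-direction splittings of Theorem \ref{local to global split} (the local splittings from condition (c) on a fine cyclic cover would already suffice), and you spell out the final gluing by $\eta_k\circ\Xi_k$, which the paper leaves as ``by construction''.
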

\begin{proof}
    For $\scr{L}\in \mathsf{Be}_{C,\mu}$,
    consider the graded Stokes-filtered local system \[\bigoplus_{c\in-C}\gr_c(\scr{L}).\]
    There exists an elementary exponential
    meromorphic connection $(E^0,\nabla^0)$
    such that
    \[(\scr{H}^0\widetilde{\DR}(E^0,\nabla^0),\scr{H}^0\DR_{\leqslant}(E^0,\nabla^0))\simeq \bigoplus_{c\in -C}\gr_c\scr{L}\]
    as Stokes-filtered local systems.
    Consequently, we have
    $\DR^\mu(E^0,\nabla^0)\simeq \scr{G}$ and
    \[\aut_\mu^{<0}(E^0,\nabla^0)
    \simeq \aut_\mu^{<0}(\scr{G}),\]
    where we set $\scr{G}\coloneqq \scr{A}_\mu\otimes_{\C} \gr\scr{L}$.
    This induces an isomorphism
    \[H^1(S^1,\aut^{<0}_\mu(E^0,\nabla^0))\simeq H^1(S^1,\aut^{<0}_\mu(\scr{G})).\]
    Let $[\scr{L}]\in H^1(S^1,\aut^{<0}_\mu(\scr{G}))$
    be the class of $\scr{L}$ given by the
    correspondence in Theorem \ref{Classify},
    and let $[(E,\Xi)]\in H^1(S^1, \aut_\mu^{<0}(E^0))$ be the corresponding class via the above isomorphism.
    We have natural inclusions:
    \begin{align*}
        \aut_\mu^{<0}(E^0)&\hookrightarrow
        \scr{G}^{0}_1(E^0)
        \coloneqq
        \id_{E^0}+\End(E^0)\otimes\scr{A}^{<0}, \\
        \scr{G}^{0}_1(E^0)&
        \hookrightarrow\scr{G}_1(E^0)\coloneqq
        \id_{E^0}+\End(E^0)\otimes\scr{A}.
    \end{align*}
    By the Malgrange--Sibuya theorem for Gevrey order one matrices
    \cite{loday2016divergent}*{\S 3.5.4, Theorem 1.4.2}, the map
    \begin{align*}
        H^1(S^1,\scr{G}^0_1(E^0))\longrightarrow
        H^1(S^1,\scr{G}_1(E^0))
    \end{align*}
    is trivial.
    Consequently, the class $[(E,\Xi)]$ is represented by an object
    \[(E,\Xi)=((E,\mathcal{I},\Xi),(E^0,\nabla^0))\in {\mathsf{dR}}_{C,\mu}.\]
    By construction,
    we have $\DR^\mu(E,\Xi)\simeq \scr{L}$,
    which completes the proof.
\end{proof}

\section{Comparison of isomorphisms  
in dimension one}\label{Comp1}
In this section,
we construct objects in $\mathsf{dR}_{C,\mu}$ and $\mathsf{Be}_{C,\mu}$
arising from complex geometry of dimension one.
We then show that
these objects correspond to each other via the functor $\mathrm{Sol}_\mu$.
We suggest that 
this fact could be seen as 
a reformulation of
the comparison of isomorphisms conjecture
\cite{kontsevich2024holomorphic}*{Conjecture 4.7.1} in this setting. 

\subsection{Setting}\label{setting}
Let $X$ be a compact Riemann surface.
Let $\alpha$ be a meromorphic one-form on $X$.
Let $D$ and $Z$ denote the sets of points in $X$
where $\alpha$ has poles and zeros, respectively. 
Set $Y\coloneqq X\setminus D$.
Let $\widetilde{L}=H_1(Y,\Z)$
be the first homology group
of $Y$.
Let $\widetilde{\mu}_\alpha\colon \widetilde{L}\to \C$
denote the period map, defined by
\begin{align*}
    \widetilde{\mu}_\alpha(\gamma)=\int_\gamma \alpha
    \quad \quad(\gamma\in \widetilde{L}).
\end{align*}
We set $L\coloneqq \widetilde{L}/\mathrm{Ker}(\widetilde{\mu}_\alpha)$.
Let $\mu_\alpha\colon L\to \mathbb{C}$
denote the induced map.
We fix a norm $\|\cdot\|$ on $L$.
In this paper,
we make the following assumption:
\begin{assumption}\label{Assume}
    The sets $D$ and $Z$ are non-empty finite subsets of $X$, and
    the period map $\mu=\mu_\alpha$
    satisfies the support property \eqref{support property}.
\end{assumption}

Let $\pi_Y\colon \widetilde{Y}\to Y$
be the universal covering map of $Y$.
We fix a holomorphic function 
$f\colon \widetilde{Y}\to\C$
such that $df=\pi_Y^*\alpha$.
There exists a finite subset $C_f\subset \C$
such that the set of critical values of $f$ coincides with $C_f+\mu_\alpha(L)$,
and that for any $c,c'\in C_f$,
$c\equiv c' \pmod{\mu_\alpha(L)}$ implies $c=c'$.
Note that $f(\pi_Y^{-1}(Z))=C_f+\mu_\alpha(L)$.
 
For later use,
we fix labelings
$D=\{p_1,\dots,p_r\}$ and
$Z=\{q_1,\dots,q_s\}$.
Let $n_k$ be the order of the pole of $\alpha$ at $p_k$.
We fix a local coordinate chart 
$(V_k, v_k)$ centered at $p_k$ such that $\alpha_{|V_k}=v_k^{-n_k}dv_k$ if $n_k>1$ and 
$\alpha_{|V_k}=\alpha_kv_k^{-1}dv_k$ 
if $n_k=1$.
We have
\begin{align*}
    2\pi\i \alpha_k\in \mu_\alpha(L)
\end{align*}
and $\alpha_k\neq 0$ if $n_k=1$.
Let $m_j$ be the order of the zero of $\alpha$
at $q_j$.
We also fix a local coordinate chart
$(U_j,u_j)$ centered at $q_j$
such that $\alpha_{|U_j}=u_j^{m_j}du_j$. 
Note that we have the equality
\begin{align}\label{genus}
   \sum_{j=1}^sm_j-\sum_{k=1}^r n_k=2g-2 
\end{align}
where $g$ denotes the genus of $X$,
since any meromorphic $1$-form defines a canonical divisor, whose degree is $2g-2$.
\subsection{De Rham object} 
The objects and results
in this subsection 
have already appeared in the work of 
Kontsevich--Soibelman.

Let $\scr{O}_X(*D)$ denote 
the sheaf of meromorphic functions on $X$
whose poles are contained in $D$. 
Let $\Omega^1_{X}(*D)$ denote the 
sheaf of meromorphic one-forms
on $X$ whose poles are contained in $D$.
\begin{definition}[Global de Rham cohomology]
    We define a $\C\conv{z}$-vector space
    $E_{X,\alpha}$ called the  
    \textit{global de Rham cohomology of $(X,\alpha)$} as follows:
    \begin{align*}
    E_{X,\alpha}\coloneqq \Cok[H^0(X,\scr{O}_X(*D))\otimes_\C {\C\conv{z}}\xrightarrow{zd+\alpha}
    H^0(X,\Omega^1_X(*D))\otimes_\C {\C\conv{z}}
    ].
\end{align*}
For $\omega\in H^0(X,\Omega_X^1(*D))$,
we let $[\omega]$ denote the corresponding 
element in $E_{X,\alpha}$. 
\end{definition}

\begin{lemma}\label{fin dim}The vector space
    $E_{X,\alpha}$ is finite dimensional over ${\C\conv{z}}$. 
\end{lemma}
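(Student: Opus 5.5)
Since $Y=X\setminus D$ is a smooth affine curve ($D\neq\emptyset$), we have $H^0(X,\scr{O}_X(*D))=\Gamma(Y,\scr{O}_Y)$ and $H^0(X,\Omega^1_X(*D))=\Gamma(Y,\Omega^1_Y)$. Hence $E_{X,\alpha}=H^1_{\dR}(Y;\,zd+\alpha)\otimes_{\C[z]}\C\conv{z}$, because the global algebraic de Rham complex on an affine curve computes the hypercohomology. The plan is to bound the dimension of the cokernel after specializing $z$ to a generic nonzero value. Base change then gives the bound over the fraction field. Concretely, set $M=\Gamma(Y,\Omega^1_Y)[z]/(zd+\alpha)\Gamma(Y,\scr{O}_Y)[z]$, a $\C[z]$-module. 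Then $E_{X,\alpha}=M\otimes_{\C[z]}\C\conv{z}$, since $\C\conv{z}$ is flat over $\C[z]$ and tensoring commutes with cokernels. It therefore suffices to show that $M\otimes\C(z)$ is finite dimensional over $\C(z)$.

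To do this directly, I will use a pole-order filtration, which avoids appealing to general finiteness for irregular $D$-modules. Let $\scr{O}(kD')$ denote functions with pole order at most $k$ at each $p_i$, and set $F_k=H^0(X,\Omega^1_X(kD+n D))$ with $n=\max_i n_i$. For $g\in H^0(X,\scr{O}_X(kD))$ with a pole of exact order $a\ge 1$ at $p_i$, compare the polar parts of $zdg$ and $\alpha g$ at $p_i$:
\begin{itemize}
\item If $n_i>1$, the term $\alpha g$ has pole order $a+n_i$, while $zdg$ has pole order only $a+1<a+n_i$. The leading coefficient is therefore that of $\alpha g$, independently of $z$.
\item If $n_i=1$, both terms have pole order $a+1$, with leading coefficient $(\alpha_i - za)$ times that of $g$. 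This is nonzero for $z\notin\{\alpha_i/a\}$, and in particular it is nonzero over $\C(z)$.
\end{itemize}
So over $K=\C(z)$, the operator $zd+\alpha$ raises the pole order at each $p_i$ by exactly $n_i$ and is injective on leading terms. Riemann--Roch then shows that the image of $H^0(\scr{O}(kD))\otimes K$ has codimension in $H^0(\Omega^1(kD+\sum n_ip_i))\otimes K$ bounded independently of $k$. For $k$ large, both dimensions are computed by Riemann--Roch, and the difference is $(\deg K_X+\sum n_i+k r -g+1)-(kr-g+1)$. Adding the contribution of $\ker(zd+\alpha)$, this gives $\dim\le 2g-2+\sum_i n_i+1$ plus the kernel dimension.

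To make this precise, I would argue by induction on $k$. Every $\omega$ with high pole order can be reduced modulo the image to lower pole order, by killing the leading term at each $p_i$ using the leading-term surjectivity just established. This reduces everything to $\Omega^1(N_0D)\otimes K$ for a fixed $N_0$, which is finite dimensional. Consequently $M\otimes K$ is finite dimensional, and so is $E_{X,\alpha}$.

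The main obstacle I expect is the bookkeeping at simple poles, where the leading coefficient $\alpha_i-za$ could vanish. This is exactly why I pass to the generic fiber $\C(z)$ (or to $\C\conv{z}$) rather than working at fixed $z$. I also need the reduction step to kill polar parts at all $p_i$ simultaneously. For that I will use the fact that $H^0(\scr{O}(kD))\to\bigoplus_i(\text{polar parts of order }k)$ is surjective for $k$ large, which follows from Riemann--Roch, since $H^1(\scr{O}(kD))=0$ for $k\gg0$.
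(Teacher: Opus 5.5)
Your proof is correct and uses the same mechanism as the paper's. The leading-term computation at each pole raises the pole order by exactly $n_i$; at simple poles the coefficient $\alpha_i - za$ is nonzero (in the paper, $zm+\alpha_k$ is a unit in $\C\{z\}$). This lets one reduce every class to forms of bounded pole order, which span a finite-dimensional space.

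The difference is packaging. The paper phrases this as the quasi-isomorphism $\mathcal{C}_{\alpha,\ell}\hookrightarrow\mathcal{C}_{X,\alpha}$ of sheaf complexes over the germ $\mathcal{X}^0$, together with $H^1(X,\scr{O}_{\alpha,\ell})=H^1(X,\Omega^1_{\alpha,\ell})=0$ for $\ell\gg0$. You instead do the reduction directly on global sections over $\C(z)$, with the same vanishing entering through Riemann--Roch as surjectivity onto leading polar coefficients.
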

\begin{proof}
Set $\cal{X}=\C\times X$
and $\cal{D}=\C\times D$.  
Let $p_{\cal{X}}\colon \cal{X}\to X$
be the projection. 
We set \[\scr{O}_{\cal{X}}(*\cal{D})=p_{\cal{X}}^{*}\scr{O}_X(*D)
=\scr{O}_{\cal{X}}\otimes_{p_{\cal{X}}^{-1}\scr{O}_X}\scr{O}_X(*D)\] and 
$\Omega_{\cal{X}/\C}^1(*\cal{D})\coloneqq p_{\cal{X}}^*\Omega_{X}^1(*D)$.
Set $\cal{X}^0=\{0\}\times X$.
Let $i_{\cal{X}^0}\colon \cal{X}^0\to\cal{X}$
be the inclusion. 
Set $\scr{O}_{\mathcal{X}^0}\coloneqq i_{\mathcal{X}^0}^{-1}\scr{O}_{\mathcal{X}}$.
Let $\pi_{\cal{X}^0}\colon \cal{X}^0\to \{0\}$
be the projection.

We consider the following complex 
concentrated in degrees 0 and 1:
\begin{align*}
    \cal{C}_{X,\alpha}:
    i^{-1}_{\cal{X}^0}\scr{O}_{\cal{X}}(*\cal{D}) \xrightarrow{zd+\alpha}i_{\cal{X}^0}^{-1}\Omega^1_{\cal{X}/\C}(*\cal{D}).
\end{align*} 
For each integer $\ell$,
we set $\scr{O}_{\alpha,\ell}= \scr{O}(\ell D)$
and $\Omega^1_{\alpha,\ell}=\Omega^1_X(\sum_k(\ell+n_k)p_k)$. 
We then put $\scr{O}_{\cal{X}^0,\alpha,\ell}
\coloneqq i_{\cal{X}^0}^{-1}p_{\cal{X}}^*\scr{O}_{\alpha,\ell}$ and $\Omega^1_{\cal{X}^0,\alpha,\ell}
\coloneqq i_{\cal{X}^0}^{-1}p_{\cal{X}}^*\Omega^1_{\alpha,\ell}$.
We have a sub-complex 
\begin{align*}
    \cal{C}_{\alpha,\ell}\colon \scr{O}_{\cal{X}^0,\alpha,\ell}\xrightarrow{zd+\alpha}\Omega^1_{\cal{X}^0,\alpha,\ell}.
\end{align*}
Then the inclusion $\mathcal{C}_{\alpha,\ell}\hookrightarrow \mathcal{C}_{X,\alpha}$ is a quasi-isomorphism. 
Indeed, at each point $p_k\in D$,
using the local coordinate $(V_k,v_k)$, 
the differential $zd+\alpha$
is described as follows: 
Consider an element
$h(z,v_k)=v_k^mg(z,v_k)\in i_{\cal{X}^0}^{-1}\scr{O}_{\cal{X}}(*\cal{D})_{(0,p_k)}$ with
$g(z,v_k)\in i_{\cal{X}^0}^{-1}\scr{O}_{\cal{X}}$,
$g(z,0)\in \scr{O}\setminus \{0\}$, and $m\in\Z$. We have
\begin{align}
   \begin{split}\label{explicit}
        h(z,v_k)&\mapsto \{z\partial_{v_k}h(z,v_k)+v_k^{-n_k}h(z,v_k)\}dv_k\\
    &=\{zmv_k^{m-1}g(z,v_k)+v_k^mz\partial_{v_{k}}g(z,v_k)+
    v_k^{m-n_k}g(z,v_k)\}dv_k
    \end{split}
\end{align}
if $n_k>1$, and 
\begin{align}
   \begin{split}\label{explicit1}
        h(z,v_k)&\mapsto \{z\partial_{v_k}h(z,v_k)+\alpha_kv_k^{-1}h(z,v_k)\}dv_k\\
    &=\{ (zm+\alpha_k)
    v_k^{m-1}g(z,v_k)+
    v_k^mz\partial_{v_{k}}g(z,v_k)\}dv_k
    \end{split}
\end{align}
if $n_k=1$. 
It follows that $\scr{H}^0(\cal{C}_{X,\alpha})
=\scr{H}^0(\cal{C}_{\alpha,\ell})=0$.
Since we may reduce the pole order 
along $\{v_k=0\}$
by the image of the maps \eqref{explicit}
and \eqref{explicit1}, 
we also have
\[\scr{H}^1(\cal{C}_{\alpha,\ell})\simeqto \scr{H}^1(\cal{C}_{X,\alpha}).\] 
Here, we note that in the case $n_k=1$, the term $(zm+\alpha_k)$ is an invertible element in $\scr{O}$ since $\alpha_k\neq 0$.

For sufficiently large $\ell\in \Z$,
we have that 
$H^1(X,\cal{O_{\alpha,\ell}})=H^1(X,\Omega^1_{\alpha,\ell} )=0$.
Hence we obtain
\begin{align*}
E_{X,\alpha}
\simeq 
\Cok\left[H^0(X,\scr{O}_{\alpha,\ell})
\otimes {\C\conv{z}}\xrightarrow{zd+\alpha} 
H^0(X,\Omega^1_{\alpha,\ell})
\otimes {\C\conv{z}} \right].
\end{align*}
Since $H^0(X,\Omega^1_{\alpha,\ell})$ is finite dimensional,
the lemma follows. 
\end{proof}
\begin{remark}[Relation to \cite{kontsevich2024holomorphic}
with notations in \S \ref{Intro CIC}]
    By the proof of Lemma \ref{fin dim}, we have 
    \[E_{X,\alpha}\simeq H^1_{\dR,\gl}(X,\alpha)\otimes \C\conv{z}\]
    and \[H^0_{\dR,\gl}(X,\alpha)\otimes \C\conv{z}=0.\] 
    Here, we used the notaions 
    in \S \ref{Intro CIC}. 
\end{remark}

\begin{definition}
    We set
    \begin{align*}
        \widehat{E}_{X,\alpha}^0\coloneqq
        \mathrm{Coker}\left[
        \bigoplus_{j=1}^s\C\jump{z,u_j}[z^{-1}]
        \xrightarrow{\bigoplus_j zd+u_j^{m_j}du_j}
        \bigoplus_{j=1}^s\C\jump{z,u_j}[z^{-1}]
        du_j
        \right].
    \end{align*}
    Here, $\{(U_j,u_j)\}_j$ denotes the
    local coordinate systems fixed in this section.
    For $k=0,1,2,\dots$, we let
    $[u^k_jdu_j]$ denote the element in $\widehat{E}^0_{X,\alpha}$ represented
    by $u^k_jdu_j$.
\end{definition}
Let ${\C\pole{z}}$
denote the field of formal Laurent series.
\begin{lemma}
    We have
    \[\widehat{E}_{X,\alpha}^0=
    \bigoplus_{j=1}^s\bigoplus_{k=0}^{m_j-1}\C\pole{z}
    [u_j^kdu_j].\]
    In particular, $\dim_{{\C\pole{z}}}\widehat{E}^0_{X,\alpha} =\sum_{j=1}^sm_j$.
\end{lemma}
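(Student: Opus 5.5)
The cokernel splits as a direct sum over $j$, so it suffices to treat one summand. Fix $j$, write $u=u_j$ and $m=m_j$, and put $M=\C\jump{z,u}[z^{-1}]$. The claim is then that $M\,du/(zd+u^m du)M$ is free over $\C\pole{z}$ with basis $[u^k du]$ for $0\le k\le m-1$. First I would note that $\C\jump{z,u}[z^{-1}]=\C\pole{z}\otimes$-completed in $u$: every element can be written as $\sum_{n\ge0}a_n(z)u^n$ with $a_n\in\C\pole{z}$ and uniformly bounded pole order in $z$. The differential acts by
\[
u^n\mapsto (z n u^{n-1}+u^{n+m})\,du .
\]

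For spanning, I reduce the $u$-degree inductively. For $n\ge m$, the relation
\[
[u^n du]=-z(n-m+1)[u^{n-m}du]
\]
(obtained from the image of $u^{n-m+1}$) lowers the degree by $m$ at the cost of a factor of $z$. Iterating, each monomial $u^n du$ becomes a multiple of some basis element $[u^k du]$ with $k<m$, with coefficient $\pm z^{q}\prod(\cdots)$ where $n=qm+k$. For a general series $\sum_n a_n(z)u^n du$, the reductions of the terms $u^n$ carry powers $z^{\lfloor n/m\rfloor}$ that tend to $\infty$, so the total sum converges $z$-adically in $\C\pole{z}$. To make this rigorous, I would instead construct an explicit preimage. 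Define the primitive $h=\sum_n b_n u^n$ recursively by $b_{n-m+1}$ determined by the requirement that the coefficient of $u^n$ with $n\ge m$ vanishes. The resulting $b_n$ involve factors $z^{\ell}$ with $\ell$ growing with $n$, and the pole order in $z$ stays bounded since the multiplying factors are polynomial in $z$. Hence $h\in M$. This convergence, i.e. checking that $h$ stays in $\C\jump{z,u}[z^{-1}]$, is the main point to verify.

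For linear independence, suppose $\sum_{k<m}c_k(z)u^k du=(z\partial_u h+u^m h)du$ with $h=\sum b_n u^n$. Compare coefficients of $u^n$. For $n\ge m$ we get $z(n+1)b_{n+1}+b_{n-m}=0$, and for $n<m$ we get $c_n=z(n+1)b_{n+1}$. From the first equations, $b_{n-m}=-z(n+1)b_{n+1}$, so each $b_\ell$ equals $z^{t}\cdot(\text{nonzero constant})\cdot b_{\ell+(m+1)t}$ for every $t$. Since the $b$'s have pole order in $z$ bounded by some $N$, taking $t\to\infty$ shows $\mathrm{ord}_z b_\ell\ge t-N$ for all $t$, hence $b_\ell=0$ for all $\ell$. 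Then $c_k=0$. The dimension count follows by summing over $j$.
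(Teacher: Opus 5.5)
Your overall route is the paper's own: the paper simply records $u^n\mapsto(znu^{n-1}+u^{n+m})du$ and reads off the basis. Your linear-independence argument is correct and complete. It also makes explicit something the paper leaves implicit: the uniform bound on pole orders in $\C\jump{z,u}[z^{-1}]$ is what kills the homogeneous solutions $b_\ell=-z(\ell+m+1)b_{\ell+m+1}$. Over $\C\pole{z}\jump{u}$ one could take such a chain with $b_1=1$ and get $[du]=0$.

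The spanning step, however, is wrong as written. The image of $u^{n-m+1}$ is $(z(n-m+1)u^{n-m}+u^{n+1})\,du$, so the relation it gives is $[u^{n+1}du]=-z(n-m+1)[u^{n-m}du]$. The reduction therefore lowers the degree by $m+1$, not by $m$; your own independence computation already uses the step $m+1$. Your displayed relation at $n=m$ would say $[u^mdu]=-z[du]$, which is false. In fact $u^mdu$ is the image of $1$, so $[u^mdu]=0$, while $[du]\neq0$ by your independence argument. So the bookkeeping $n=qm+k$ is not the reason exactly $m$ classes survive. The correct statement is: for $n=q(m+1)+k$ with $0\le k\le m$,
\[
[u^ndu]=(-z)^q\prod_{i=0}^{q-1}\bigl(k+1+i(m+1)\bigr)\,[u^kdu],
\]
and the residue class $k=m$ drops out because $[u^mdu]=0$.

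Your ``rigorous'' preimage construction has the same index shift. More importantly, it is not a recursion in the direction you suggest. The coefficient equations $a_n=z(n+1)b_{n+1}+b_{n-m}$ for $n\ge m$ determine each $b_p$ from $b_{p+m+1}$, i.e.\ lower indices from higher ones, so there is no starting point. Solving upward instead introduces a factor $z^{-1}$ at each step and destroys the pole bound. The way out is to unroll each chain into a series,
\[
b_p=\sum_{t\ge0}(-z)^t\prod_{i=1}^{t}\bigl(p+i(m+1)\bigr)\,a_{p+m+t(m+1)}.
\]
This converges $z$-adically: if all $a_n\in z^{-N}\C\jump{z}$, the $t$-th term lies in $z^{t-N}\C\jump{z}$. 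Hence $b_p\in z^{-N}\C\jump{z}$ uniformly in $p$, so $h=\sum_pb_pu^p\in z^{-N}\C\jump{z,u}$. One then checks
\[
\omega-(z\partial_uh+u^mh)\,du=\sum_{k<m}\bigl(a_k-z(k+1)b_{k+1}\bigr)u^kdu .
\]
This is exactly the convergence check you flagged but did not carry out. With it and the corrected indices, your proof is complete.
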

\begin{proof}
    In the complex
    \begin{align*}
       \C\jump{z,u_j} \xrightarrow{zd+u_j^{m_j}du_j}\C\jump{z,u_j}du_j,
    \end{align*}
    the map is given by
    \begin{align}\label{local zero}
        u_j^n\mapsto znu_j^{n-1}du_j+u_j^{n+m_j}du_j
    \end{align}
    for $n=0,1,2,\dots$. Hence the cokernel
    has a basis
    \[\{[u_j^kdu_j]\mid k=0,1,\dots,m_j-1\}.\]
    The lemma follows.
\end{proof}

\begin{definition}
    We set $E_{X,\alpha}^0\coloneqq \bigoplus_jE^{0,j}_{X,\alpha}$ with $E^{0,j}_{X,\alpha}\coloneqq\bigoplus_{k=0}^{m_j-1}{{\C\conv{z}}}[u_j^kdu_j]$.
\end{definition}

    For each point $q_j\in Z$,
setting $f_j\coloneqq (m_j+1)^{-1}u_j^{m_j+1}$, we have $df_j=u_j^{m_j}du_j$.
We consider the following commutative diagram:
    \begin{align*}
    \xymatrix@C=5em{
       z\C\jump{z,u_j}
       \ar[d]_{z^2\partial_z-f_j}
    \ar[r]^{d+z^{-1}u_j^{m_j}du_j}
    &\C\jump{z,u_j}du_j
    \ar[d]^{z^2\partial_z-f_j}\\
    z\C\jump{z,u_j}
    \ar[r]^{d+z^{-1}u_j^{m_j}du_j}
       &\C\jump{z,u_j}du_j}
    \end{align*}
    It induces a connection
    $\nabla^j\colon\widehat{E}_{X,\alpha}^{0,j}\to \widehat{E}^{0,j}_{X,\alpha}dz$.
    More explicitly, we have 
    \begin{align*}
        \nabla^j_{z^2\partial_z}[u^k_jdu_j]&=-[f_j(u_j)u_j^kdu_j]\\
        &=
            (m_j+1)^{-1}z(k+1)[u_j^{k}du_j]
    \end{align*} 
    Hence $\nabla^j$ is a regular singular connection
    on $E_{X,\alpha}^{0,j}$ 
    with a logarithmic lattice 
    \[\bigoplus_{k=0}^{m_j-1}
    \scr{O}[u_j^kdu_j].\]
    This property characterizes $E_{X,\alpha}^0\subset 
    \widehat{E}_{X,\alpha}^0$. 
    
   Choose points $\widetilde{q}_j\in \widetilde{Y}$ ($j=1,\dots,s$)
such that $\pi_Y(\widetilde{q}_j)=q_j$.
Recall that we have fixed 
a finite set $C_f\subset \C$ in \S \ref{setting}.
We may assume that the set of critical values corresponds to $\{f(\widetilde{q}_j)\mid j=1,\dots,s\}=C_f$.
Let $c_j \coloneqq f(\widetilde{q}_j)$.

\begin{definition}[Local de Rham cohomology]
    We set
    \[\nabla^0=\bigoplus_{j=1}^s \left(\nabla^j-c_j\cdot \mathrm{id}_{E^{0,j}_{X,\alpha}}\frac{dz}{z^2}\right)\colon E_{X,\alpha}^0\longrightarrow E_{X,\alpha}^0dz.\]
    We call the elementary exponential connection $(E_{X,\alpha}^0,\nabla^0)$
    with exponential factors $C_f$
    the \textit{local de Rham cohomology} of $(X,\alpha)$.
\end{definition}

\begin{lemma}[\cite{kontsevich2024holomorphic}*{Proposition 3.3.2}]\label{formaldR}
    There is a natural isomorphism 
    \[\widehat{\Xi}_{X,\alpha}\colon 
    E_{X,\alpha}\otimes_{\scr{O}}\C\jump{z}\longrightarrow  \widehat{E}^0_{X,\alpha}.\] 
\end{lemma}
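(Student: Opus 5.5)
The plan is to compute both sides from one lattice and compare them through the formal completion of the complex $\cal{C}_{\alpha,\ell}$ from the proof of Lemma \ref{fin dim}.

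\textbf{Step 1: a lattice for $E_{X,\alpha}$.} Fix $\ell\gg 0$ so that $H^1(X,\scr{O}_{\alpha,\ell})=H^1(X,\Omega^1_{\alpha,\ell})=0$. Set
\[
E_{\ell}\coloneqq \Cok\left[H^0(X,\scr{O}_{\alpha,\ell})\otimes\scr{O}\xrightarrow{zd+\alpha}H^0(X,\Omega^1_{\alpha,\ell})\otimes \scr{O}\right].
\]
By the proof of Lemma \ref{fin dim}, $E_\ell\otimes_{\scr{O}}\C\conv{z}\simeq E_{X,\alpha}$. Tensoring with $\C\jump{z}$ is exact, because $\C\jump{z}$ is flat over the Noetherian ring $\scr{O}$. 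So $E_\ell\otimes_{\scr{O}}\C\jump{z}$ is the cokernel of the same map with $\scr{O}$ replaced by $\C\jump{z}$. We read the statement of the lemma as
\[
\widehat{\Xi}_{X,\alpha}\colon E_\ell\otimes_{\scr{O}}\C\pole{z}\longrightarrow \widehat{E}^0_{X,\alpha},
\]
that is, the source is the $\C\jump{z}$-cokernel with $z$ inverted. This is independent of $\ell$, because the quasi-isomorphisms $\cal{C}_{\alpha,\ell}\hookrightarrow\cal{C}_{X,\alpha}$ are compatible as $\ell$ grows.

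\textbf{Step 2: the fibre at $z=0$.} Reduce the complex of sheaves $\scr{O}_{\alpha,\ell}\jump{z}\xrightarrow{zd+\alpha}\Omega^1_{\alpha,\ell}\jump{z}$ modulo $z$. We get the $\scr{O}_X$-linear map $\alpha\colon\scr{O}(\ell D)\to\Omega^1_X(\sum_k(\ell+n_k)p_k)$.
\begin{itemize}
\item This map is injective.
\item Its cokernel is the skyscraper $\bigoplus_j\C[u_j]/(u_j^{m_j})\,du_j$ supported on $Z$, because $\alpha$ generates the target exactly away from $Z$.
\end{itemize}
All terms are $z$-torsion free. A $z$-adic filtration argument (Nakayama for the $\C\jump{z}$-modules of sections, together with Mittag-Leffler for the inverse limit over $z^N$) then gives two consequences. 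First, the $z$-completed complex has no $\scr{H}^0$. Second, its $\scr{H}^1$ is supported on $Z$ and is a free $\C\jump{z}$-module of rank $\sum_j m_j$.

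\textbf{Step 3: globalize and localize.} Because $H^1$ of both terms vanishes, the global $\C\jump{z}$-cokernel from Step 1 equals the hypercohomology $H^1$ of the completed complex. Since the cohomology sheaf is concentrated on $Z$, this equals $\bigoplus_j \scr{H}^1_{q_j}$.

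At each $q_j$, the stalk of the completed complex may be replaced by its completion in $u_j$. This is justified because $\scr{H}^1$ is finite over $\C\jump{z}$ and supported at the point, so the $(z,u_j)$-adic and $z$-adic completions agree on cohomology.

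The local computation \eqref{local zero} then identifies $\scr{H}^1_{q_j}$ with the $\C\jump{z}$-cokernel of $\C\jump{z,u_j}\to\C\jump{z,u_j}du_j$, which has basis $[u_j^kdu_j]$ for $0\le k<m_j$. Inverting $z$ gives $\widehat{E}^{0}_{X,\alpha}$. The resulting map is the restriction map $[\omega]\mapsto(\text{Taylor expansion of }\omega\text{ at }q_j)_j$, so it is natural.

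\textbf{Main obstacle.} The hard part is the comparison at Step 2–3. We must show that the $z$-completion of the global complex has cohomology concentrated at $Z$ and that no contribution comes from $D$. Near $p_k$ this requires the pole-reduction argument \eqref{explicit}, \eqref{explicit1} to hold uniformly in $z$ after completion. In particular, when $n_k=1$, the factor $zm+\alpha_k$ must be invertible in $\C\jump{z}$, which holds because $\alpha_k\neq0$. I would handle this first, using the $z$-adic filtration and checking the statement modulo $z^N$ for every $N$.
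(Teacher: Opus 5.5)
Your proposal is correct and follows essentially the same route as the paper. You complete the lattice complex $\mathcal{C}_{\alpha,\ell}$ $z$-adically, show there is no $\scr{H}^0$ and that $\scr{H}^1$ is concentrated on $Z$, and identify the stalks at the $q_j$ with the local cokernels via Taylor expansion and \eqref{local zero}; your mod-$z$ argument for the support is a cleaner version of the paper's appeal to \eqref{explicit}. The one loose point is passing to $u_j$-completions by comparing "completions of $\scr{H}^1$", which is not an $\scr{O}_X$-module; it is simpler to run the division by $u_j^{m_j}$ from \eqref{local zero} directly on convergent germs, which shows both stalk cokernels are free over $\C\jump{z}$ on the classes $[u_j^k du_j]$ with the Taylor map matching these bases.
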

\begin{proof}
We use the notation in the proof of 
Lemma \ref{fin dim}.
Let 
$\scr{I}_{\cal{X}^0}=z\scr{O}_{\cal{X}^0}$
be the defining ideal of $\cal{X}^0$.
Then we consider the complex
\begin{align*}
    \mathcal{C}_{\alpha,\ell}^N \colon
    \scr{O}_{\mathcal{X}^0,\alpha,\ell}/\scr{I}_{\mathcal{X}^0}^N
    \xrightarrow{zd+\alpha}
    \Omega_{\mathcal{X}^0,\alpha,\ell}^1/\scr{I}_{\mathcal{X}^0}^N.
\end{align*}
This is isomorphic to the complex
$\scr{O}_{\alpha,\ell}\otimes \C[z]/(z^N)\to \Omega_{\alpha,\ell}^1\otimes \C[z]/(z^N)$.
The limit $\widehat{\mathcal{C}}_{\alpha,\ell} \coloneqq \varprojlim_N \mathcal{C}_{\alpha,\ell}^N$
is isomorphic to the complex
\begin{align*}
    \scr{O}_{\alpha,\ell}\jump{z}\longrightarrow \Omega_{\alpha,\ell}^1\jump{z}.
\end{align*}
There is a natural morphism
$\mathcal{C}_{\alpha,\ell}\to\widehat{\mathcal{C}}_{\alpha,\ell}$
which induces a morphism
\[\widehat{\Xi}_1 \colon (\R^1\pi_{\mathcal{X}^0*}\mathcal{C}_{\alpha,\ell})
\otimes \C\jump{z}\longrightarrow \R^1\pi_{\mathcal{X}^0*}\widehat{\mathcal{C}}_{\alpha,\ell}.
\]
One can check that $\widehat{\Xi}_1$ is an isomorphism.

Let $\C\jump{z,u_j}_{q_j}$
denote the skyscraper sheaf supported on $q_j$
whose fiber is $\C\jump{z,u_j}$. 
The Taylor expansion at each point $q_j$ defines a morphism of complexes
\begin{align}\begin{split}\label{Taylor}
\xymatrix@C=5em{
\widehat{\cal{C}}_{\alpha,\ell}
\colon\scr{O}_{\alpha,\ell}\jump{z}\ar[r]\ar[d]_{\tau^0}& \Omega_{\alpha,\ell}\jump{z}\ar[d]^{\tau^1}
\\ 
\cal{T}\colon \bigoplus_{j=1}^s\C\jump{z,u_j}_{q_j}\ar[r]^{zd+u_j^{m_j}du_j}&
\bigoplus_{j=1}^s\C\jump{z,u_j}_{q_j}du_j
}.
\end{split}
\end{align}
Here, $\tau^1(\omega)=\sum_j g_j(u_j)du_j$
if we have $\omega_{|U_j}=g_j(u_j)du_j$
for $g_j(u_j)\in \C\{u_j\}$. 
Note that $\scr{H}^0(\widehat{\mathcal{C}}_{\alpha,\ell})=0$,
and $\scr{H}^1(\widehat{\mathcal{C}}_{\alpha,\ell})$ is supported on $Z$.
(The latter fact follows from $\scr{H}^1(\mathcal{C}_{\alpha,\ell}^N)_{P}=0$
for any $N$ and $P\notin Z$; see \eqref{explicit}).
Then the local description \eqref{local zero}
implies that \eqref{Taylor} is a quasi-isomorphism. We obtain an isomorphism
\begin{align*}
    \widehat{\Xi}_2\colon \R^1\pi_{\cal{X}^0*}\widehat{\cal{C}}_{\alpha,\ell}\longrightarrow\R^1\pi_{\cal{X}^0*}\cal{T}.
\end{align*}
The composition $\widehat{\Xi}_2\circ\widehat{\Xi}_1$
induces the desired isomorphism. 
\end{proof}
\begin{remark}[Relation to \cite{kontsevich2024holomorphic}
with notations in \S \ref{Intro CIC}]
    By the proof of Lemma \ref{formaldR},
    we have 
    \[\widehat{E}_{X,\alpha}^0\simeq H^1_{\dR,\loc}(X,\alpha)\otimes_{\C\jump{z}}\C\pole{z}\]
    and 
    $H^0_{\dR,\loc}({X,\alpha})=0$
    in this case. 
\end{remark}
The following theorem gives a
more precise description of 
$\phi_\dR$ in \S \ref{Intro CIC},

\begin{theorem}[De Rham local to global isomorphism]\label{Xihat}
    The isomorphism $\widehat{\Xi}_{X,\alpha}$
    is defined over $\C\jump{z}_{1}$,
    i.e., we have
    \begin{align*}
        \widehat{\Xi}_{X,\alpha}\colon E_{X,\alpha}\otimes\C\jump{z}_1\xrightarrow{\sim}
        E^0_{X,\alpha}\otimes\C\jump{z}_1.
    \end{align*}
\end{theorem}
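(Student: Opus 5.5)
The matrix of $\widehat{\Xi}_{X,\alpha}$ in the bases $\{[\omega_i]\}$ of $E_{X,\alpha}$ and $\{[u_j^kdu_j]\}$ of $E^0_{X,\alpha}$ has Gevrey order one entries, and so does its inverse. That is the plan.

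First, fix $\ell$ large as in the proof of Lemma \ref{fin dim}. Choose finitely many forms $\omega_1,\dots,\omega_N\in H^0(X,\Omega^1_{\alpha,\ell})$ whose classes form a ${\C\conv{z}}$-basis of $E_{X,\alpha}$. The map $\widehat{\Xi}_{X,\alpha}$ is the composite of the Taylor map $\tau^1$ with the reduction of $\bigoplus_j\C\jump{z,u_j}du_j$ to the basis $[u_j^kdu_j]$, $0\le k\le m_j-1$. So I would compute explicitly the reduction of a monomial $u_j^n du_j$ modulo the image of \eqref{local zero}. The relation $u_j^{n+m_j}du_j\equiv -zn\,u_j^{n-1}du_j$ reduces exponents by $m_j+1$ at the cost of a factor $-zn$.

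Write $n=q(m_j+1)+k$ with $0\le k\le m_j$. Iterating the relation gives
\[
[u_j^ndu_j]=(-z)^q\prod_{i=1}^{q}\bigl(k+i(m_j+1)-m_j-1\bigr)\,[u_j^kdu_j].
\]
The case $k=m_j$ yields $0$, because the last reduction produces the factor $n'=0$. The coefficient is bounded by $C^{q}(q!)$ times $z^q$, with $q\approx n/(m_j+1)$.

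Next, let $\omega$ be a form holomorphic near $q_j$ with Taylor series $\sum_n a_nu_j^ndu_j$. Cauchy estimates give $|a_n|\le A\rho^{-n}$. The coefficient of $z^q$ in the image of $[\omega]$ is a finite sum of at most $m_j+1$ terms of size $A\rho^{-q(m_j+1)-m_j}C^qq!$. This is Gevrey order one in $z$. The only subtlety is that $\tau^1$ lands in $\C\jump{z,u_j}$ with $z$-dependence when we work over ${\C\conv{z}}$ coefficients. Since the basis elements $\omega_i$ are $z$-independent, and multiplication by convergent series preserves $\C\jump{z}_1$, the matrix of $\widehat{\Xi}_{X,\alpha}$ has entries in $\C\jump{z}_1$ (possibly after inverting $z$, which is harmless).

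For the inverse, $\C\jump{z}_1$ is a ring, and the determinant of a Gevrey matrix lies in $\C\jump{z}_1$. Since $\widehat{\Xi}_{X,\alpha}$ is an isomorphism over $\C\pole{z}$, the determinant is nonzero. A nonzero element of $\C\jump{z}_1[z^{-1}]$ is invertible there: write it as $z^mu$ with $u(0)\neq0$, and the inverse of $u$ is Gevrey by the standard closure of $\C\jump{z}_1$ under inversion of units. Hence the inverse matrix is also defined over $\C\jump{z}_1[z^{-1}]$.

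The main obstacle is ensuring that no hidden non-Gevrey contributions arise from the identification $\widehat{\Xi}_2\circ\widehat{\Xi}_1$. These would come from representatives whose reduction mixes infinitely many $z$-powers through the global relation $zd+\alpha$. I would handle this by checking that the Taylor map is computed termwise on the $z$-independent representatives $\omega_i$, so only the local reduction above enters. I would also keep track of the ${\C\conv{z}}$-span, using that $\C\jump{z}_1$ contains ${\C\conv{z}}\cap\C\jump{z}$.
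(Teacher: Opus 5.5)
Your argument is essentially the paper's: $\widehat{\Xi}_{X,\alpha}([\omega])$ is computed by Taylor-expanding $z$-independent representatives $\omega$ at each zero $q_j$ and reducing with \eqref{local zero}, and the resulting coefficients are visibly Gevrey of order one; your determinant argument for the inverse is a useful addition that the paper leaves implicit. One small slip: the factors in your product should be $k+1+(i-1)(m_j+1)$, since otherwise you would get $[u_j^{m_j+1}du_j]=0$ whereas in fact it equals $-z[du_j]$; the vanishing for $k=m_j$ comes from $[u_j^{m_j}du_j]=0$, i.e.\ the relation with $n=0$, and none of this affects your $C^q q!$ bound.
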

\begin{proof}
Take a form $\omega\in H^0(X,\Omega^1_X(*D))$.
Locally, we have the expression
\[\omega_{|U_j}=g_j(u_j)du_j\]
for some $g_j(u_j)\in\C\{u_j\}$.
Then, by \eqref{local zero} and the construction of $\widehat{\Xi}_{X,\alpha}$
in the proof of Lemma \ref{formaldR},
we have
\[\widehat{\Xi}_{X,\alpha}([\omega])=\sum_{j=1}^s
\sum_{k=0}^{m_j-1}\widehat{g}_{j,k}(z)[u_j^kdu_j],\]
where $\widehat{g}_{j,k}(z)$ is given as follows:
If $g_j(u_j)=\sum_n a_n^{(j)}u_j^n$ ($a_n^{(j)}\in \C$),
then
\begin{align*}
  \widehat{g}_{j,k}(z)&=
\sum_{n=0}^\infty(-1)^na_{m_jn+k}^{(j)}\prod_{\ell=0}^{n-1} (k+m_j\ell+1)
z^{n}\\
&=\Gamma\left(\frac{k+1}{m_j}\right)^{-1}\sum_{n=0}^\infty(-1)^na^{(j)}_{m_jn+k}m_j^n\Gamma\left(\frac{k+1}{m_j}+n\right)z^n.
\end{align*}
We see that $\widehat{g}_{j,k}(z)\in \C\jump{z}_1$
for any $j,k$.
The theorem follows.
\end{proof}

\subsection{Local objects}
We shall recall the comparison theorem
for local objects associated with $(X,\alpha)$.
The de Rham local object $(E^0_{X,\alpha},\nabla^0)$
has already been defined above.
We now define the corresponding local Betti structure.

Let $\{(U_j,u_j)\}_{j=1}^s$
be the local coordinate systems fixed in \S \ref{setting}.
Via the coordinate function $u_j$,
we regard $U_j$ as an open subset of the
complex plane, which we denote by $\C_{u_j}$.
We consider the real blow-up
$\widetilde{\P}^1_{u_j}$
of $\P^1_{u_j}=\C_{u_j}\cup\{\infty_j\}$
at $\infty_j$.
The boundary $S^1_j=\{e^{\mathrm{i}\theta_j}\mid \theta_j\in\R\}$
parametrizes the directions at $\infty_j$.
For $e^{\mathrm{i}\theta}\in S^1$,
we set
\begin{align*}
    \widetilde{D}_j^\theta=\{e^{\i \theta_j}\in S^1_j\mid
    \mathrm{Re}[\exp(-2\pi\i\{(m_j+1)\theta_j+\theta\})]>0\}.
\end{align*}
Then, there is a local system
of $\C$-vector spaces
$\scr{F}_{X,\alpha}^{j}$
on $S^1$ such that
\[(\scr{F}_{X,\alpha}^{j})_{e^{\mathrm{i}\theta}}=H_1(\widetilde{\P}^1_{u_j},\widetilde{D}^\theta_j;\C),\]
where the right-hand side denotes the relative homology group.
More explicitly, the local system $\scr{F}^j_{X,\alpha}$ is given as follows:
For an open arc $I\subset S^1$,
we set $\widetilde{D}^I_j=\bigcap_{e^{\mathrm{i}\theta}\in I}\widetilde{D}^\theta_j.$
Then, we obtain a presheaf
$I\mapsto H_1(\widetilde{\P}^1_{u_j},\widetilde{D}_j^I;\C).$
The sheaf associated with
this presheaf is defined to be $\scr{F}^j_{X,\alpha}$.

We then set \[\scr{F}_{X,\alpha}=\bigoplus_{j=1}^s\scr{F}^{j}_{X,\alpha}.\]
Recall that we have fixed complex numbers
$C_f=\{c_j\mid j=1,\dots,s\}$.
We regard $\scr{F}_{X,\alpha}$
as a graded Stokes-filtered local system
by the filtration characterized by the following
equality:
\begin{align*}
    (\scr{F}_{X,\alpha})_{\leqslant c,e^{\mathrm{i}\theta}}
    =\bigoplus_{j \,:\, c_j\leqslant_\theta c}(\scr{F}_{X,\alpha}^j)_{e^{\mathrm{i}\theta}}\quad (\forall e^{\mathrm{i}\theta}\in S^1).
\end{align*}

The following is a well-known local comparison theorem:
\begin{lemma}\label{localrh}
We have an isomorphism of
Stokes-filtered local systems
\[\rh^{\mathrm{local}}\colon \scr{F}_{X,\alpha}\xrightarrow{\sim} \mathrm{Sol}_z(E_{X,\alpha}^0,\nabla^0)\]
locally given by $\rh^{\mathrm{local}}=\bigoplus_{j=1}^s \rh^{\mathrm{local}}_j$, where
$\rh^{\mathrm{local}}_j\colon \scr{F}_{X,\alpha}^j\to \mathrm{Sol}_z(E_{X,\alpha}^{0,j},\nabla^0)$
is defined by
\begin{align*}
    \langle\rh^{\mathrm{local}}_{j}([c]),[u^k_jdu_j]\rangle=e^{-c_j/z}
    \int_c e^{-f_j(u_j)/z}u_j^kdu_j
\end{align*}
for $[c]\in (\scr{F}_{X,\alpha}^j)_{e^{\i\theta}}=H_1(\widetilde{\P}^1_{u_j},\widetilde{D}^\theta_j;\C)$,
$j=1,\dots,s$, and $k=0,1,\dots,m_j-1$.
\end{lemma}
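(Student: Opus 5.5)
The plan is to reduce the statement to a single point $q_j$ and the polynomial potential $f_j=(m_j+1)^{-1}u_j^{m_j+1}$, since everything is defined as a direct sum over $j$. Concretely, $\rh^{\mathrm{local}}=\bigoplus_j\rh^{\mathrm{local}}_j$, $\scr{F}_{X,\alpha}=\bigoplus_j\scr{F}^j_{X,\alpha}$, and $\mathrm{Sol}_z(E^0_{X,\alpha},\nabla^0)=\bigoplus_j\mathrm{Sol}_z(E^{0,j}_{X,\alpha},\nabla^0)$. For fixed $j$, the factor $e^{-c_j/z}$ simply shifts the exponential factor from $0$ to $c_j$. Since both filtrations are defined by comparing $c_j$ with $c$ via $\leqslant_\theta$, it suffices to treat the case $c_j=0$. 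In that case we must show that $\rh^{\mathrm{local}}_j$ is an isomorphism of local systems onto the moderate-growth solutions, and that the image of $\scr{F}^j_{X,\alpha}$ lies in $\scr{H}^0\DR_{\leqslant 0}$, so that the filtrations match.

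\emph{Well-definedness and flatness.} For $[c]\in H_1(\widetilde{\P}^1_{u_j},\widetilde{D}^\theta_j;\C)$, I would choose a representative whose ends go to $\infty_j$ inside $\widetilde{D}^\theta_j$. There $\mathrm{Re}(f_j/z)\to+\infty$ for $z$ in a small sector around $\theta$, so the integral converges absolutely and is holomorphic in $z$ on that sector. By Stokes' theorem it depends only on the relative class, and it kills the image of $zd+u_j^{m_j}du_j$, since $\int_c d(e^{-f_j/z}g)=0$. Hence it gives a well-defined functional on $E^{0,j}_{X,\alpha}$, and these functionals glue as $\theta$ varies. Flatness for the dual connection $\mathbb{D}_z\nabla^0$ follows from differentiating under the integral: $z^2\partial_z$ of the integrand equals the integrand of $f_j u_j^k du_j$, which is exactly the relation defining $\nabla^j$. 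The sign twist by $j_z^*$ in $\mathbb{D}_z$ has to be checked against the convention $e^{-f_j/z}$. I would verify this on the explicit formula $\nabla^j_{z^2\partial_z}[u^k_jdu_j]=(m_j+1)^{-1}z(k+1)[u^k_jdu_j]$.

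\emph{Explicit evaluation and moderate growth.} Rescaling $u_j=z^{1/(m_j+1)}t$ gives
\[
\int_c e^{-f_j/z}u_j^kdu_j=z^{(k+1)/(m_j+1)}\int_{c'}e^{-t^{m_j+1}/(m_j+1)}t^kdt .
\]
The first factor is of moderate growth, and the second is a constant. The relative homology $H_1$ has rank $m_j$; it is spanned by the paths joining consecutive sectors of $\widetilde{D}^\theta_j$. The constants for $k=0,\dots,m_j-1$ then form a Vandermonde-type matrix in the $(m_j+1)$-th roots of unity times Gamma values $\Gamma((k+1)/(m_j+1))$, which is nondegenerate. So $\rh^{\mathrm{local}}_j$ is injective between local systems of equal rank $m_j$, hence bijective on stalks. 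All images lie in $\scr{H}^0\DR_{\leqslant 0}$, while the solutions of the regular singular dual connection are themselves of moderate growth. Therefore the filtered pieces agree, and after the shift by $c_j$ they agree for every $c$.

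The main obstacle is the precise bookkeeping of directions: the exponent convention defining $\widetilde{D}^\theta_j$, together with the antipodal map $j_z$ in $\mathbb{D}_z$, must ensure that the integration domain is exactly where $e^{-f_j/z}$ decays. Otherwise one gets the shifted Stokes filtration. I would fix this first by checking the case $m_j=1$ (a Gaussian) by hand.
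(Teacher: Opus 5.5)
Your argument is essentially the paper's proof. The paper evaluates the integrals on the explicit rays $c^{(j)}_{\ell,d}$, obtaining $e^{2\pi\i(k+1)\ell/(m_j+1)}h_{j,k}(z)$ with $h_{j,k}$ a nonzero multiple of $z^{(k+1)/(m_j+1)}\Gamma\bigl(\tfrac{k+1}{m_j+1}\bigr)$, and then diagonalizes this matrix with the discrete Fourier combinations $C^{(j)}_{\ell,d}$ rather than citing Vandermonde nondegeneracy. It leaves well-definedness, flatness and the filtration implicit.

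For the sign check you deferred: since $z\,d(e^{-f_j/z}g)=e^{-f_j/z}(z\,dg-g\,u_j^{m_j}du_j)$, the functional $\int_c e^{-f_j/z}(\cdot)$ annihilates the image of $zd-u_j^{m_j}du_j$, not of $zd+u_j^{m_j}du_j$. Likewise, your $z^2\partial_z$ computation has the sign opposite to what flatness for the plain dual requires. Both are fixed by reading the functional as a section of $\mathbb{D}_z(E^{0,j}_{X,\alpha})=j_z^*\bigl((E^{0,j}_{X,\alpha})^\vee\bigr)$, i.e.\ by substituting $z\mapsto -z$.
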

\begin{proof}
Although this is well known,
we shall give a sketch of proof to fix some notation
for later use. 
Take any $d\in \R$. 
For $j=1,\dots,s$ and $\ell=0,1,\dots,m_j$,
we set 
\[c^{(j)}_{\ell,d}(t)
=t\exp\left(\frac{2\pi\ell+d}{m_j+1}\i\right)\]
for $t\geq 0$ and 
\[c^{(j)}_{\ell,d}(t)
=-t\exp\left(\frac{2\pi(\ell+1)+d}{m_j+1}\i\right)\]
for $t\leq 0$. 
Then, we obtain a basis $[c_{\ell,d}^{(j)}]\in H_1(\widetilde{\P}^1_{u_j},
\widetilde{D}^{{\mathbb{I}_d}}_j;\C)$ $(\ell=0,1,\dots,m_j-1)$,
and 
   \begin{align*}
        &\int_{c^{(j)}_{\ell,d}} \exp(-f_j(u)/z)u_j^kdu_j=\exp({2\pi\i(k+1)\ell/(m_j+1)})h_{j,k}(z),\\
        &h_{j,k}(z)=(m_j+1)^{-1}({1-e^{2\pi\i(k+1)/(m_j+1)}})\{(m_j+1)z\}^{\frac{k+1}{m_j+1}}
        \Gamma\left(\frac{k+1}{m_j+1}\right)
    \end{align*}
for $|\arg(z)-d|<\pi/2$. 
Hence, if we define
\begin{align}\label{Cjld}
    C_{\ell,d}^{(j)}
    =\frac{1}{m_j+1}\sum_{\ell'=0}^{m_j} \exp\left(-\frac{2\pi\mathrm{i}(\ell+1)\ell'}{m_j+1}\right)c_{\ell',d}^{(j)}
\end{align}
for $\ell=0,1,\dots,m_j-1$,
then we have
\[\int_{C_{\ell,d}^{(j)}}\exp(-f_j(u_j)/z)u_j^kdu_j=\delta_{k,\ell}h_{j,k}(z),\]
which implies the lemma.
\end{proof}
Set $\scr{A}_\mu=\scr{A}_{\mu_\alpha}$
to simplify the notation.  We consider the tensor product
$\scr{G}_{X,\alpha}=\scr{A}_\mu\otimes \scr{F}_{X,\alpha}$,
which is naturally regarded as 
a graded $\scr{A}_\mu$-module.
The associated isomorphism 
of graded $\scr{A}_\mu$-modules
is also denoted by $\rh^\loc$:
\begin{align*}
    \rh^\loc\colon
    \scr{G}_{X,\alpha}
    \longrightarrow 
    \Sol_\mu
    (E^0_{X,\alpha},\nabla^0)
    =\scr{A}_\mu
    \Sol_z
    (E_{X,\alpha}^0,\nabla^0).
\end{align*}

\subsection{Betti object}\label{BO}
We consider real oriented blow-up
\[\varpi_X\colon \widetilde{X}_D\coloneqq \mathrm{Bl}_D^{\R}(X)\to X\]
along $D=\{p_1,\dots,p_r\}$.
For the local neighborhood $(V_k,v_k)$
of $p_k$, we have the explicit description
$\varpi_X^{-1}(V_k)
=\{(v_k,e^{\i\theta_k})\in V_k\times {S}^1\mid v_k=|v_k|e^{\i\theta_k}\}$.
Let $S^1_k$ denote the boundary $\varpi_X^{-1}(p_k)=\{e^{\i\theta_k}\mid \theta_k\in \R\}$.

Let $j\colon \C^*\times Y\to \widetilde{\C}_0\times \widetilde{X}_D$ and $i\colon {S}^1\times \widetilde{X}_D\to \widetilde{\C}_0\times \widetilde{X}_D$
be the inclusions.
Set $\widetilde{\scr{O}}_{{S}^1\times\widetilde{ X}_D}=i^{-1}j_*\scr{O}_{\C^*\times Y}$. 
Fix an open interval $I\subset {S}^1$.
Then we obtain a local system 
on $\widetilde{X}_D$ as follows:
For any open subset $U\subset \widetilde{X}_D$,
set 
\[\cal{M}_{X,\alpha}^I(U)=\scr{A}_\mu(I)\exp(-z^{-1}f)\subset \widetilde{\scr{O}}_{{S}^1\times \widetilde{X}_D}(I\times U),\]
which defines a local system of $\scr{A}_\mu(I)$-modules. 
Here, $f$ is regarded as a 
holomorphic function on $U\cap Y$
by taking a sheet $\widetilde{U}\subset \pi_Y^{-1}(U\cap Y)$.
Although a different choice of the sheet
defines a different function,
the resulting difference in $\exp(-z^{-1}f)$
is contained in $\scr{A}_\mu(I)$.
Hence the submodule $\scr{A}_\mu(I)\exp(-z^{-1}f)$ is well-defined. 

Recall that for each point 
$p_k\in D$,
we chose a local coordinate 
chart $(V_k,v_k)$
such that either 
$\alpha_{|V_k}=v_k^{-n_k}dv_k$
for $n_k>1$ or 
$\alpha_{|V_k}=\alpha_kv_k^{-1}dv_k$
for some $\alpha_k\in \C^*$ holds.

If $\alpha_{|V_k}=v_k^{-n_k}dv_k$
for $n_k>1$, then
on the corresponding boundary
$S^1_{k}$, we consider 
a subset $\widetilde{D}_k^I\subset S^1_k$ as follows:
\begin{align}\label{Irregular boundary}
    \widetilde{D}_k^I\coloneqq \{e^{\i\theta_k}\in S^1_k
    \mid \mathrm{Re}(e^{-\i((n_k-1)\theta_k+\theta)})<0 
    \text{ for all } e^{\i\theta}\in  I) \}.
\end{align}

If $\alpha_{|V_k}=\alpha_kv_k^{-1}dv_k$
for some $\alpha_k\in \C^*$, we set
\begin{align}\label{regular boundary}
    \widetilde{D}_k^I\coloneqq
    \begin{dcases}
        S^1_k&(\mathrm{Re}(e^{-\i\theta}\alpha_k)<0\text{ for all }e^{\i\theta}\in I),\\
        \emptyset &(\text{otherwise}).
    \end{dcases}
\end{align}

We also set $\widetilde{D}^I=\bigcup_k\widetilde{D}_k^I$
and $\widetilde{X}_D^I\coloneqq Y\cup \widetilde{D}^I\subset \widetilde{X}_D$.
Let $\scr{M}_{X,\alpha}^I$ denote
the restriction of $\cal{M}_{X,\alpha}^I$
to $\widetilde{X}^I_D$. 
Then, we consider the relative homology group
\[\scr{L}^{\mathrm{pre}}_{X,\alpha}(I)=  H_1(\widetilde{X}_D^I,\widetilde{D}^I;\scr{M}_{X,\alpha}^I)\] with local system coefficients, 
which is an $\scr{A}_\mu(I)$-module. 
Then, the correspondence
$I\mapsto \scr{L}^{\mathrm{pre}}_{X,\alpha}(I)$
together with the natural maps $\scr{L}^{\mathrm{pre}}_{X,\alpha}(I)\to\scr{L}_{X,\alpha}^{\rm pre}(J)$ 
for connected open subsets $J\subset I$
defines a presheaf 
of $\scr{A}_\mu$-modules
on ${S}^1$.
\begin{definition}[Global Betti $\scr{A}_\mu$-module]
    Let $\scr{L}_{X,\alpha}$ 
    denote the sheaf of $\scr{A}_\mu$-modules
    associated with 
    $\scr{L}_{X,\alpha}^{\mathrm{pre}}$. 
    We call $\scr{L}_{X,\alpha}$
    the global Betti $\scr{A}_\mu$-module
    of $(X,\alpha)$. 
\end{definition}
Since the monodromy of $\scr{M}_{X,\alpha}^I$ around $p_k$
with $n_k=1$ is $\exp(2\pi\i\alpha_k/z)$, 
we have 
$H_d(\widetilde{X}_D^I,\widetilde{D}^I;\scr{M}_{X,\alpha}^I)=0$ for $d\neq 1$.
It follows that 
the sheaf $\scr{L}_{X,\alpha}$
is a locally free $\scr{A}_\mu$-module
of rank 
$2g-2+\sum_{k=1}^{r}n_k$, 
which equals to
$\sum_{j=1}^s m_j$ by \eqref{genus}.

\begin{lemma}[Betti local-to-global isomorphism]\label{etaX}
For any $(C_f,\mu_\alpha)$-generic direction $d\in \R$,
    there is a morphism
    of $\scr{A}_\mu$-modules
    \begin{align*}
        \eta^{X,\alpha}_d\colon \scr{G}_{X,\alpha|{\mathbb{I}_d}}\longrightarrow  
        \scr{L}_{X,\alpha|{\mathbb{I}_d}}.
    \end{align*}
The definition of $\eta_d^{X,\alpha}$
is given by \eqref{etaeta} in the proof.
\end{lemma}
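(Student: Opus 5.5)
The morphism $\eta^{X,\alpha}_d$ will send local Lefschetz thimbles at the zeros $q_j$ to global rapid decay cycles on $\widetilde{X}_D^{\mathbb{I}_d}$, obtained by flowing along the steepest descent paths of $\exp(-z^{-1}f)$ in the direction $d$. It is then extended $\scr{A}_\mu$-linearly. Since $\scr{G}_{X,\alpha}=\scr{A}_\mu\otimes\scr{F}_{X,\alpha}$ and $\scr{F}_{X,\alpha}$ is a local system, it is enough to do three things: define $\C$-linear maps $H_1(\widetilde{\P}^1_{u_j},\widetilde{D}^{\mathbb{I}_d}_j;\C)\to \scr{L}^{\rm pre}_{X,\alpha}(\mathbb{I}_d)$ for each $j$, extend by $\scr{A}_\mu(\mathbb{I}_d)$-linearity, and check compatibility with restriction to sub-arcs $J\subset\mathbb{I}_d$.

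\emph{Step 1 (local generators).} I will use the basis $[c^{(j)}_{\ell,d}]$, $\ell=0,\dots,m_j-1$, fixed in the proof of Lemma \ref{localrh}. Each $c^{(j)}_{\ell,d}$ is a union of two rays in $U_j$ from $q_j$ on which $f_j=(m_j+1)^{-1}u_j^{m_j+1}$ is real positive after rotation by $e^{-\i d}$. Equivalently, on these rays $f-c_j\in e^{\i d}\R_{\geq 0}$, where $f$ is the branch with $f(\widetilde{q}_j)=c_j$.

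\emph{Step 2 (global continuation).} I will continue each ray inside $\widetilde{Y}$ as the unique path $\gamma$ with $f(\gamma(t))=c_j+e^{\i d}t$, $t\to\infty$, and project it to $Y$. This path is a lift of a ray under the local biholomorphism $f$ away from critical points. It can only break down if it meets a critical point, that is, if $c_{j'}+\mu(\gamma')-c_j\in e^{\i d}\R_{>0}$ for some $j'$ and $\gamma'\in L$. Such a value lies in $\Omega_{C_f,\mu}$, so $(C_f,\mu)$-genericity of $d$ excludes it.

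The remaining issue is that the path must not escape to infinity in $\widetilde{Y}$ without converging to a point of $D$ in $X$. I will treat the two types of poles separately.
\begin{itemize}
\item Near an irregular pole $p_k$ ($n_k>1$), $f\sim v_k^{1-n_k}/(1-n_k)$. Then $|f|\to\infty$ forces $v_k\to0$, and $\arg f\to d$ forces the limit direction $\theta_k$ to satisfy $\mathrm{Re}(e^{-\i((n_k-1)\theta_k+\theta)})<0$ for all $\theta$ with $|\theta-d|<\pi/2$. Hence the endpoint lies in $\widetilde{D}^{\mathbb{I}_d}_k$ of \eqref{Irregular boundary}.
\item Near a logarithmic pole, $f\sim\alpha_k\log v_k$, so the path tends to $p_k$ only if $\mathrm{Re}(e^{\i d}/\alpha_k)<0$.
\end{itemize}
Properness of $f$ near $D$ on a compact $X$ will be used to show that the path cannot accumulate elsewhere.

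\emph{Step 3 (definition and well-definedness).} I will set
\[
\eta^{X,\alpha}_d\bigl(a\otimes[c^{(j)}_{\ell,d}]\bigr)=a\cdot\bigl[\widetilde{c}^{(j)}_{\ell,d}\otimes e^{-z^{-1}f}\bigr],
\]
where $\widetilde{c}^{(j)}_{\ell,d}$ is the continued cycle of Step 2 and the branch is normalized by $f(\widetilde{q}_j)=c_j$. Changing the branch multiplies the section by a unit $u^\gamma\in\scr{A}_\mu$, consistently with property (b) of the filtration, so the choice does not matter. To see that the coefficient really decays on the whole arc, I will note that along the path $e^{-z^{-1}f}=e^{-c_j/z}e^{-e^{\i d}t/z}$. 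This is of rapid decay as $t\to\infty$ uniformly on compact subsets of $|\arg z-d|<\pi/2$. Compatibility with the restriction maps of $\scr{L}^{\rm pre}_{X,\alpha}$ then follows because the same cycle serves on every $J\subset\mathbb{I}_d$. Passing to sheafifications gives the required morphism of $\scr{A}_\mu$-modules on $\mathbb{I}_d$.

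\emph{Main obstacle.} I expect the hardest step to be the logarithmic poles $n_k=1$ when $\mathrm{Re}(e^{\i d}/\alpha_k)\geq 0$. In that case the flow does not enter $\widetilde{D}^{\mathbb{I}_d}$; it spirals, or winds around an annulus, picking up the monodromy $\exp(2\pi\i\alpha_k/z)$. It does not terminate at a boundary component. My first approach is to replace the infinite path by the finite segment until it first returns to a fixed loop around $p_k$, closed up by the geometric series $(1-\exp(2\pi\i\alpha_k/z))^{-1}$ in the local system. I would then check that this series is a section of $\scr{A}_\mu(\mathbb{I}_d)$. This should hold because $2\pi\i\alpha_k\in\mu(L)$ and $\mathrm{Re}(2\pi\i\alpha_k/z)$ has a constant sign on $\mathbb{I}_d$, so an expansion in $u^{\pm\gamma}$ converges with $\mu(\gamma)\leqslant_{\mathbb{I}_d}0$. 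This mirrors the Gamma-function example in \S\ref{Main idea}.
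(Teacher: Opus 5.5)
Your construction is the paper's \eqref{etaeta}: you continue the rays of $c^{(j)}_{\ell,d}$ by steepest descent, and genericity prevents collision with another zero. Two steps, however, do not go through as written.

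\emph{Escape to a pole.} You propose to use ``properness of $f$ near $D$'' to show that the trajectory cannot accumulate in $Y$. But $f$ lives on $\widetilde{Y}$, and $f(\gamma(t))=c_j+e^{\i d}t\to\infty$ says nothing about the projection to $Y$. A leaf can stay forever in a compact part of $Y$ (in a cylinder of closed leaves, or a minimal component) while $f$ grows by periods. Local analysis at the poles only handles trajectories that already remain in some $V_k$. Ruling out recurrence is a genuine fact about the straight-line flow of the translation surface with poles $(Y,e^{-\i d}\alpha)$. The paper first notes that $(C_f,\mu_\alpha)$-genericity excludes saddle connections and closed leaves in direction $d$, since their periods would lie in $\Omega_{C_f,\mu_\alpha}\cap\ell_d$. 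It then cites Boissy's result that in such a direction every trajectory converges to a pole. Your proposal has no substitute for this step.

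\emph{Logarithmic poles.} The case you single out is not the problem.
\begin{itemize}
\item If $\mathrm{Re}(e^{\i d}/\alpha_k)=0$, then $\ell_d$ contains $2\pi\i\alpha_k$ or $-2\pi\i\alpha_k$. Both lie in $\mu_\alpha(L)\setminus\{0\}\subset\Omega_{C_f,\mu_\alpha}$, so $d$ is not generic.
\item If $\mathrm{Re}(e^{\i d}/\alpha_k)>0$, then $|v_k|$ increases along the ascending flow, which simply leaves $V_k$.
\end{itemize}
Your justification is also false. $\mathrm{Re}(2\pi\i\alpha_k e^{-\i\theta})$ vanishes at $\theta\equiv\arg\alpha_k\pmod{\pi}$. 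The open arc $\mathbb{I}_d$ avoids these directions only when $e^{\i d}\in\R\cdot\i\alpha_k$, i.e.\ exactly for non-generic $d$. So for generic $d$ there is a direction $\theta_0\in\mathbb{I}_d$ along which the poles $z=\alpha_k/n$ of $(1-e^{2\pi\i\alpha_k/z})^{-1}$ accumulate at $0$. Hence this function is not even a section of $\widetilde{\scr{O}}$ near $e^{\i\theta_0}$.

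The actual difficulty is the case $\mathrm{Re}(e^{\i d}/\alpha_k)<0$ with $e^{\i d}/\alpha_k\notin\R$, where the leaf spirals into $p_k$. By \eqref{regular boundary}, $S^1_k\subset\widetilde{D}^J$ only for arcs $J$ on which $\mathrm{Re}(e^{-\i\theta}\alpha_k)<0$, which is a proper sub-arc of $\mathbb{I}_d$. So your claim that ``the same cycle serves on every $J\subset\mathbb{I}_d$'' fails near one end of $\mathbb{I}_d$. Convergence of $\int e^{-f/z}\omega$ along the spiral is not the same as being a compact relative cycle in $\widetilde{X}^J_D$.

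The paper straightens the spiral into a radial segment to $p_k$. This is a valid cycle on that sub-arc, which contains both $d$ and $\theta_0$. On the complementary end of $\mathbb{I}_d$, which stays at angular distance at least $\pi/2$ from $\theta_0$, your device does work: a loop around $p_k$ times $(1-\lambda)^{-1}$, with $\lambda$ the monodromy, converges in $\scr{A}_\mu$ there. The two local sections agree on the overlap, because rel $S^1_k$ the loop is homologous to $(1-\lambda)$ times the radial segment. So they glue to a section of the sheaf $\scr{L}_{X,\alpha}$ over $\mathbb{I}_d$, not of the presheaf. Your idea is useful, but only locally, and it belongs to the spiral case rather than the one you name.
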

\begin{proof}
For a $(C_f,\mu_\alpha)$-generic direction $d$, the paths
\[\ell_c(t)=c+te^{\i d}\quad(c\in C_f+\mu_\alpha(L), t\geq 0)\]
do not intersect each other. 
It follows that 
the foliation \[\mathrm{Im}(e^{-\i d}f)=\text{constant}\]
does not have saddle connections, 
i.e., there is no
paths in the foliation which connect 
a pair of zero points.  

Under this observation,
we define a path
$\gamma_{\ell,d}^{(j)}$ by the following 
conditions:
\begin{enumerate}
    \item $\gamma^{(j)}_{\ell,d}\colon \R\to Y$
    is piecewise $C^\infty$ 
    with a unique non-smooth point $\gamma^{(j)}_{\ell,d}(0)=q_j$. 
    \item There is $\varepsilon_{j}>0$
    such that $\gamma_{\ell,d}^{(j)}((-\varepsilon_j,\varepsilon_j))\subset U_j$ for all $\ell=0,\dots,m_j$.
    Moreover, we have 
    \begin{align*}
        u_j(\gamma_{\ell,d}^{(j)}(t))=
            t\exp\left(\frac{2\pi  \ell+ d}{m_j+1}\i \right)
    \end{align*}
    for $0\leq t<\varepsilon_j$ and 
    \begin{align*}
        u_j(\gamma_{\ell,d}^{(j)}(t))=
            -t\exp\left(\frac{2\pi(\ell+1)+d}{m_j+1}\i \right) 
    \end{align*}
    for $-\varepsilon_j< t\leq 0$. 
    
    \item For $t\neq 0$, we have $\mathrm{Im}[e^{-\i d}\alpha (\gamma^{(j)}_{\ell,d*}(\partial_t))]=0$
    and $e^{-\i d}\alpha(\gamma^{(j)}_{\ell,d*}(\partial_t))>0$. 
\end{enumerate}

The path $\gamma^{(j)}_{\ell,d}$ is determined by these
conditions up to orientation-preserving reparameterization. 
We claim that 
\begin{align}\label{etaeta}
    \eta_d^{X,\alpha}([c_{\ell,d}^{(j)}])\coloneqq 
[\exp(-z^{-1}f)\otimes \gamma_{\ell,d}^{(j)}]
\end{align}
defines a section in $\scr{L}_{X,\alpha}({\mathbb{I}_d})$,
where $f$ is taken so that 
$f(\gamma_{\ell,d}^{(j)}(0))=c_j$. 

To see this, we consider the behavior of $\gamma_{\ell,d}^{(j)}(t)$ as $|t|\to \infty$.
We consider the behavior for $t\gg0$
(the behavior for $t\ll 0$ is similar).
By Assumption \ref{Assume} and 
\cite{boissy2015connected}*{Appendix A.1, Proposition 3.7}, orbits for 
the geodesic flow of $e^{-\i d}\alpha$
converges to a pole
for any $(C_f,\mu_\alpha)$-generic direction 
$d$.
It follows that 
there exists a unique $k\in\{1,\dots,r\}$
such that $\gamma_{\ell,d}^{(j)}(t)\in V_k$
for sufficiently large $t>0$.

Recall that 
either 
$\alpha_{|V_k}=v_k^{-n_k}dv_k$
for $n_k>1$ or 
$\alpha_{|V_k}=\alpha_kv_k^{-1}dv_k$
for some $\alpha_k\in \C^*$ holds. 
The foliation 
of these one forms are classically well-known and described explicitly in 
\cite{Strebel1984}*{\S 7.2, 7.3}
(in terms of the associated 
quadradic differentials). 

According to the description, 
when $\alpha_{|V_k}=v_k^{-n_k}dv_k$
for $n_k>1$,
it follows that 
$\gamma^{(j)}_{\ell,d}(\infty)\coloneqq  \lim_{t\to \infty}\gamma_{\ell,d}^{(j)}(t)$
in $\widetilde{X}_D$ is well defined.  
Moreover, 
for any $e\in \mathbb{I}_d$,
there exists an open neighborhood 
$I$ of $e$ such that 
$\gamma^{(j)}_{\ell,d}(\infty)\in \widetilde{D}^I$,
which implies the claim in this case. 

When $\alpha_{|V_k}=\alpha_kv_k^{-1}dv_k$
for some $\alpha_k\in \C^*$, the description of the foliation on 
$V_k$ in \cite{Strebel1984}*{\S 7.2} implies that 
$\gamma_{\ell,d}^{(j)}$
is a logarithmic spiral or a straight line.
In the straight line case,
we obtain the desired result. 
In the logarithmic spiral case,
we replace the 
path in $V_k$ with a straight line 
to $p_k$ (we may assume that $V_k$ is a small disk).
We then obtain the desired result.

In every case, 
we obtain a section in $\mathscr{L}_{X,\alpha}(\mathbb{I}_d)$,
which implies the lemma. 
\end{proof}

\subsection{Comparison of isomorphisms}\label{CI}
The following is the second main theorem of this paper:

\begin{theorem}[cf.{\cite{kontsevich2024holomorphic}*{Conjecture 4.7.1}}]\label{2nd main}
Let $(X,\alpha)$ satisfy Assumption $\ref{Assume}$.
\begin{itemize}
\item There exist a cyclic covering 
${\mathcal{I}}=\{I_k\}_{k\in \Lambda_K}$
and an isomorphism
\[{\Xi}_{X,\alpha,k}\colon E_{X,\alpha}\otimes\scr{A}_{|I_k}\longrightarrow E^0_{X,\alpha}\otimes \scr{A}_{|I_k}
\]
such that the tuple $((E_{X,\alpha},\mathcal{I},\Xi_{X,\alpha}),(E^0_{X,\alpha},\nabla^0))$
is an object of $\dR_{C_f,\mu_\alpha}$
and that the underlying formal isomorphism 
is $\widehat{\Xi}_{X,\alpha}$ defined in Theorem $\ref{Xihat}$.
\item There is an isomorphism
\begin{align*}
    \mathrm{rh}^{\rm global}\colon 
    \scr{L}_{X,\alpha}\longrightarrow \mathrm{Sol}_{\mu_\alpha}(E_{X,\alpha},{\Xi}_{X,\alpha})
\end{align*}
of $\scr{A}_\mu$-modules 
such that 
the diagram
\begin{align}\begin{split}\label{Diagram}
    \xymatrix@C=7em{
\scr{G}_{X,\alpha|{\mathbb{I}_d}}\ar[r]^{{\rm rh}^{\rm local}}\ar[d]_{\eta^{X,\alpha}_d}& \mathrm{Sol}_{\mu_\alpha }(E^0_{X,\alpha},\nabla^0)_{|{\mathbb{I}_d}}
\ar[d]^{\mathbb{D}_z(\Xi_{X,\alpha})^d}
\\
    \scr{L}_{X,\alpha|{\mathbb{I}_d}}\ar[r]^{{\rm rh}^{\rm global}}
    &\mathrm{Sol}_{\mu_\alpha}
    (E_{X,\alpha},{\Xi}_{X,\alpha})_{|{\mathbb{I}_d}}
     }
\end{split}
\end{align}
commutes for any $(C_f,\mu_\alpha)$-generic direction
$d\in\R$. 
Here, $\mathbb{D}_z(\Xi_{X,\alpha})^d$
denotes the unique 
analytic lift of $\mathbb{D}_z\widehat{\Xi}_{X,\alpha}$
over ${\mathbb{I}_d}$. 
In particular,
\begin{enumerate}
    \item the $\scr{A}_\mu$-module
    $\scr{L}_{X,\alpha}$ together with the filtration defined via $\mathrm{rh}^{\rm global}$ is an object of $\mathsf{Be}_{C_f,\mu_\alpha}$
    such that  $\scr{A}_\mu\otimes \gr\scr{L}_{X,\alpha}\simeq \scr{G}_{X,\alpha}$, and 
    \item the equality 
    $\gr(\mathrm{rh}^{\rm global})=\mathrm{rh}^{\rm local}$ holds. 
\end{enumerate}
\end{itemize}
\end{theorem}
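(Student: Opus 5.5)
The plan is to build everything from the period pairing between the Betti module $\scr{L}_{X,\alpha}$ and the global de Rham space $E_{X,\alpha}$. I will construct $\mathrm{rh}^{\rm global}$ first, and then read off the analytic lifts $\Xi_{X,\alpha,k}$ from the local and global rapid decay cycles.

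\emph{Step 1: the period pairing.} For an open arc $I$, a class $[\exp(-z^{-1}f)\otimes\gamma]\in\scr{L}^{\rm pre}_{X,\alpha}(I)$ and $\omega\in H^0(X,\Omega^1_X(*D))$, set
\[
\langle\gamma,[\omega]\rangle=\int_\gamma e^{-f/z}\omega .
\]
This converges on sectors over $I$, because the ends of $\gamma$ lie in $\widetilde{D}^I$, where $e^{-f/z}$ decays rapidly by \eqref{Irregular boundary} and \eqref{regular boundary}. The pairing vanishes on $(zd+\alpha)H^0(X,\scr{O}_X(*D))$ by Stokes' formula. It is $\scr{A}_\mu(I)$-linear, since changing the sheet of $f$ multiplies by some $u^\gamma$. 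After $j_z^*$ (the sign $z\mapsto -z$ built into $\mathbb{D}_z$), this yields a morphism
\[
\mathrm{rh}^{\rm global}\colon\scr{L}_{X,\alpha}\to\scr{A}_\mu\otimes\mathbb{D}_zE_{X,\alpha}\otimes\widetilde{\scr{O}}.
\]

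\emph{Step 2: the lift on a generic $\mathbb{I}_d$ and commutativity.} Fix a $(C_f,\mu_\alpha)$-generic $d$. I evaluate the pairing on the cycles $\eta_d^{X,\alpha}([C^{(j)}_{\ell,d}])$ of Lemma~\ref{etaX} and \eqref{Cjld}. Along $\gamma^{(j)}_{\ell,d}$ the function $e^{-i d}(f-c_j)$ is real and increasing away from $q_j$. Parametrizing by $\zeta=f-c_j$ turns
\[
e^{c_j/z}\int_{\gamma^{(j)}_{\ell,d}}e^{-f/z}\omega
\]
into a Laplace integral $\int_0^{\infty e^{\i d}}e^{-\zeta/z}\,\phi(\zeta)\,d\zeta$. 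Here $\phi$ is built from $\omega/df$ pulled back along the branches of $f^{-1}$; near $\zeta=0$ it is a Puiseux series in $\zeta^{1/(m_j+1)}$, and it has exponential size one because $\omega$ has finite order at the poles. Watson's lemma applied to the combination \eqref{Cjld} shows that the result lies in $h_{j,\ell}(z)\cdot\scr{A}(\mathbb{I}_d)$. Its asymptotic expansion is exactly $\widehat{g}_{j,\ell}(z)$ from the proof of Theorem~\ref{Xihat}, since termwise integration against $u_j^{n}du_j$ reproduces the Gamma factors there.

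Dividing by $h_{j,\ell}$, the normalization from Lemma~\ref{localrh}, defines a matrix
\[
\Xi^d\colon E_{X,\alpha}\otimes\scr{A}_{|\mathbb{I}_d}\to E^0_{X,\alpha}\otimes\scr{A}_{|\mathbb{I}_d}
\]
with $\asy(\Xi^d)=\widehat{\Xi}_{X,\alpha}$. By construction, diagram \eqref{Diagram} commutes with $\mathbb{D}_z(\Xi_{X,\alpha})^d$ equal to the dual of $\Xi^d$.

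To see that $\Xi^d$ is invertible and $\eta_d^{X,\alpha}$ is an isomorphism, I argue as follows. $\eta^{X,\alpha}_d$ sends a basis of $\scr{G}$ of rank $\sum m_j$ to cycles that span $\scr{L}_{X,\alpha}$ over $\scr{A}_\mu$. This is the standard Lefschetz-thimble decomposition for a saddle-connection-free foliation: the relative homology is generated by the thimbles, and the ranks agree by \eqref{genus}. The formal part $\widehat{\Xi}_{X,\alpha}$ is invertible by Lemma~\ref{formaldR}.

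\emph{Step 3: gluing and the $\aut^{<0}_\mu$ condition.} Choose finitely many generic directions $d_k$ whose arcs $\mathbb{I}_{d_k}$ (shrunk slightly) form a cyclic covering $\mathcal{I}$, and set $\Xi_{X,\alpha,k}=\Xi^{d_k}$. On an overlap, $\Xi^{d_k}\circ(\Xi^{d_{k+1}})^{-1}$ is the Betti base change between the two thimble bases, transported through $\mathrm{rh}^{\rm local}$.

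\textbf{This is the main obstacle.} I must show that the thimble basis for $d_{k+1}$, expressed in the thimble basis for $d_k$, has coefficients in $\scr{A}_\mu$ of the form $\id+\sum a\,u^\gamma e^{(c_i-c_j)/z}$. The exponents must satisfy $c_j-c_i-\mu(\gamma)<0$ on the overlap, so that the result lies in $\aut^{<0}_\mu(E^0)$.

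My approach is to rotate $d$ from $d_k$ to $d_{k+1}$ and apply the Picard--Lefschetz jump each time $\ell_d$ crosses a point of $\Omega_{C_f,\mu}$. Each crossing, that is each saddle connection of $e^{-\i\theta}\alpha$ between lifts with values $c_i$ and $c_j+\mu(\gamma)$, adds a multiple of another thimble weighted by $u^\gamma e^{\pm(c_i-c_j)/z}$. This weight decays on the relevant half-plane. If infinitely many crossings occur in the sector (rank $L>1$), I expect convergence in the norm $\|\cdot\|_{I,\varrho}$ to follow from the support property \eqref{support property} together with a count of saddle connections with $|\mu(\gamma)|\le T$ growing at most exponentially in $T$, as in the appendix estimate. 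A shortcut I would try first is to avoid the count entirely: prove that $\Xi^{d_k}\circ(\Xi^{d_{k+1}})^{-1}$ is asymptotic to $\id$ and defined over $\scr{A}_\mu^{\leqslant0}$, using Lemma~\ref{vanishing A_mu} and Proposition~\ref{summability} on the overlap.

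\emph{Step 4: conclusion.} Once the $\aut^{<0}_\mu$ condition holds, $(E_{X,\alpha},\mathcal{I},\Xi_{X,\alpha})$ is an object of $\dR_{C_f,\mu_\alpha}$. The filtration transported by $\mathrm{rh}^{\rm global}$ then makes $\scr{L}_{X,\alpha}$ an object of $\Be_{C_f,\mu_\alpha}$, by Proposition~\ref{split-exsist} applied to the dual. The identity $\gr(\mathrm{rh}^{\rm global})=\mathrm{rh}^{\rm local}$ follows from commutativity of \eqref{Diagram} together with $\asy(\Xi^d)=\widehat{\Xi}_{X,\alpha}$. Finally, uniqueness of the lift on $\mathbb{I}_d$ (Theorem~\ref{local to global split}) identifies $\Xi^d$ with $\mathbb{D}_z(\Xi_{X,\alpha})^d$.
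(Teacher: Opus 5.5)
Your Steps 1, 2 and 4 follow the paper's proof closely. The paper also defines $\mathbb{D}_z(\Xi_{X,\alpha})^d$ by the normalized integrals $h_{j,k}^{-1}\int_{\Gamma^{(j)}_{k,d}}$, so that \eqref{Diagram} commutes by construction. It gets $\asy(\mathbb{D}_z(\Xi_{X,\alpha})^d)=\mathbb{D}_z\widehat{\Xi}_{X,\alpha}$ by stationary phase, where you use a Laplace/Watson representation. It does not use a thimble-generation statement: invertibility of $\eta^{X,\alpha}_d$ comes from the square itself, since $\mathrm{rh}^{\rm global}\circ\eta^{X,\alpha}_d=\mathbb{D}_z(\Xi_{X,\alpha})^d\circ\mathrm{rh}^{\rm local}$ is invertible, together with the filtration and the rank of $\scr{L}_{X,\alpha}$.

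The genuine gap is Step 3, which you call the main obstacle but do not close. The Picard--Lefschetz route does not work as sketched. For $\mathrm{rank}\,L>1$ the non-generic directions are dense, so there is no finite sequence of jumps between $d_k$ and $d_{k+1}$. You would need an ordered infinite product of Stokes factors, with convergence resting on a saddle-connection count you only conjecture; this is exactly the wall-crossing machinery the paper's formalism avoids. Your shortcut is circular. Proposition~\ref{summability} presupposes an object of $\dR_{C,\mu}$ with $\DR^\mu(E,\Xi)\simeq\scr{L}$, which is what you are constructing. Lemma~\ref{vanishing A_mu} is an $H^1$-vanishing statement and cannot show that a given matrix has entries in $\scr{A}_\mu$.

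The missing idea is that no wall-crossing computation is needed, and you already have the ingredient. In Step 2 you assert that $\eta^{X,\alpha}_d\colon\scr{G}_{X,\alpha|\mathbb{I}_d}\to\scr{L}_{X,\alpha|\mathbb{I}_d}$ is an isomorphism of $\scr{A}_\mu$-modules for every generic $d$. Hence on $\mathbb{I}_{d_k}\cap\mathbb{I}_{d_{k+1}}$ the composite $(\eta^{X,\alpha}_{d_k})^{-1}\circ\eta^{X,\alpha}_{d_{k+1}}$ is an $\scr{A}_\mu$-linear automorphism of $\scr{G}_{X,\alpha}=\scr{A}_\mu\otimes\scr{F}_{X,\alpha}$. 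In a flat frame of $\scr{F}_{X,\alpha}$ its matrix therefore has entries in $\scr{A}_\mu$. Convergence is automatic, because $\scr{L}_{X,\alpha}$ is homology with $\scr{A}_\mu(I)$-coefficients and both thimble families are $\scr{A}_\mu$-bases of it.

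By the two commutative squares, $\mathrm{rh}^{\rm local}$ carries this automorphism to $[\mathbb{D}_z(\Xi_{X,\alpha})^{d_k}]^{-1}\circ\mathbb{D}_z(\Xi_{X,\alpha})^{d_{k+1}}$. So the transition of the dual lifts is an $\scr{A}_\mu$-combination of flat sections of $\End(\mathbb{D}_zE^0_{X,\alpha})$. Both lifts have the same asymptotic expansion, so the transition is asymptotic to $\id$, and this places it in $\aut^{<0}_\mu$. After dualizing, that is condition \eqref{aut} for $\Xi^{d_k}\circ(\Xi^{d_{k+1}})^{-1}$. This is exactly how the paper verifies \eqref{aut}, and with it your Step 4 goes through.
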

\begin{proof}
For $(C_f,\mu_\alpha)$-generic direction $d\in \R$, $j=1,\dots,s$, and $k=0,1,\dots,m_j-1$, we set
\begin{align*}
    \Gamma_{k,d}^{(j)}
    =\frac{1}{m_j+1}\sum_{\ell=0}^{m_j}e^{-\frac{2\pi\i(k+1)\ell}{m_j+1} }\gamma^{(j)}_{\ell,d},
\end{align*}
where $\gamma^{(j)}_{\ell,d}$
denotes the path defined in the proof 
of Lemma \ref{etaX}.
Then, we set 
 \begin{align*}
\langle \mathbb{D}_z(\Xi_{X,\alpha})^d(\mathbb{D}_z[u_j^kdu_j]),[\omega] \rangle=
h_{j,k}^{-1}(z)\int_{\Gamma_{k,d}^{(j)}}\exp
\left(-z^{-1}\int_{q_j}^x\alpha\right)\omega
    \end{align*}
for $j=1,\dots,s$, $k=0,1,\dots,m_j-1$.
Here, $\{\mathbb{D}_z[u_j^kdu_j]\}_{j,k}$ denotes
the dual basis of $\{[u_j^kd u_j]\}_{j,k}$
in $\mathbb{D}_z(E^0_{X,\alpha})$,
$\omega\in H^0(X,\Omega_X(*D))$,
$[\omega]$ denotes the class in $E_{X,\alpha}$, and 
$h_{j,k}(z)$
denotes the function defined in 
Lemma \ref{localrh}.
We claim that 
$\mathbb{D}_z(\Xi_{X,\alpha})^d$
gives an isomorphism 
\[\mathbb{D}_z({\Xi}_{X,\alpha})^d\colon \mathbb{D}_z(E_{X,\alpha}^0)\otimes\scr{A}_{|{\mathbb{I}_d}}\longrightarrow \mathbb{D}_z(E_{X,\alpha})\otimes \scr{A}_{|{\mathbb{I}_d}}
\]
such that $\asy(\mathbb{D}_z(\Xi_{X,\alpha})^d)=
\mathbb{D}_z\widehat{\Xi}_{X,\alpha}$.
We then set 
\[\mathbb{D}_z\nabla^d\coloneqq[\mathbb{D}_z
({\Xi}_{X,\alpha})^d]^{-1} \circ  
\mathbb{D}_z
\nabla^0\circ \mathbb{D}_z({\Xi}_{X,\alpha})^d\]
where $\mathbb{D}_z\nabla^0$ denotes the connection on 
$\mathbb{D}_z(E^0_{X,\alpha},\nabla^0)$.
We then set 
\begin{align}\label{proofSol}
\Sol_\mu(E_{X,\alpha},\Xi_{X,\alpha})_{|{\mathbb{I}_d}}
\coloneqq 
\DR^\mu(\mathbb{D}_z(E_{X,\alpha}),\mathbb{D}_z\nabla^d)_{|{\mathbb{I}_d}},
\end{align}
although $\Xi_{X,\alpha}$ has not been defined yet.

We define a morphism
    $\mathrm{rh}^{\rm global}\colon \scr{L}_{|{\mathbb{I}_d}}\to \Sol_\mu(E_{X,\alpha},\Xi_{X,\alpha})_{|{\mathbb{I}_d}}$
    as follows: 
    For a representative $\omega\in H^0(X,\Omega_X^1(*D))$ of $[\omega]\in E_{X,\alpha}$
    and $[e^{-f/z}\otimes\gamma]\in \scr{L}_{X,\alpha}$, we set
    \begin{align*}
        \langle \mathrm{rh}^{\rm global}([e^{-f/z}\otimes \gamma]),[\omega]\rangle 
        =\int_\gamma e^{-f/z}\omega.
    \end{align*}
We then further claim that the diagram
\eqref{Diagram} commutes.

Assume that these two claims are true. 
Since the morphisms $\rh^{\loc}$ and $\mathbb{D}_z(\Xi_{X,\alpha})^d$
are isomorphisms, the morphism $\rh^\gl\circ \eta_d^{X,\alpha}$
is an isomorphism.
We define the filtration 
on $\scr{L}_{X,\alpha}$ via $\rh^{\gl}$.
Since $\mathbb{D}_z(\Xi_{X,\alpha})^d$
preserves the filtration,
$\eta_d^{X,\alpha}$ preserves the filtration.
Hence, we have
$\gr_c(\rh^\gl)\circ \gr_c(\eta_d^{X,\alpha})=
\gr_c(\rh^\loc)\circ \gr_c(\mathbb{D}_z(\Xi_{X,\alpha})^d)$.
This implies that $\gr(\eta_d^{X,\alpha})$
and $\gr(\rh^{\gl})$ are isomorphisms.
It follows that $\eta_d^{X,\alpha}$
is an isomorphism, hence 
$\rh^\gl$ is also an isomorphism. 

For two
$(C_f,\mu_\alpha)$-generic directions
$d$ and $d'$, the composition 
$\eta_d^{X,\alpha}\circ (\eta_{d'}^{X,\alpha})^{-1}$
corresponds to 
$[\mathbb{D}_z(\Xi_{X,\alpha})^{d}]^{-1}\circ
\mathbb{D}_z(\Xi_{X,\alpha})^{d'}$
on the intersection ${\mathbb{I}_d}\cap \mathbb{I}_{d'}$
via $\rh^\loc$ and $\rh^\gl$.
It follows that 
$\mathbb{D}_z(\Xi_{X,\alpha})^{d}\circ
[\mathbb{D}_z(\Xi_{X,\alpha})^{d'}]^{-1}$
satisfies the condition \eqref{aut}.
It also follows that 
\[\gr\left(\left(\eta_d^{X,\alpha}\right)^{-1}\circ \eta_{d'}^{X,\alpha}\right)=\id.\] 
Hence, for any 
$(C_f,\mu_\alpha)$-generic direction $d\in \R$, via $\eta_d^{X,\alpha}$,
$\scr{G}_{X,\alpha}$ is identified with
$\scr{A}_\mu\otimes \gr\scr{L}_{X,\alpha}$.
Hence $(\scr{L}_{X,\alpha},\scr{L}_{X,\alpha\leqslant })$ is an object of $\Be_{C_f,\mu_\alpha}$
with $\scr{G}_{X,\alpha}\simeq \scr{A}_\mu\otimes \gr\scr{L}_{X,\alpha}$.
Furthermore, taking a cyclic refinement 
$\mathcal{I}=\{I_k\}_{k\in\Lambda_K}$
of the covering 
\[\{{\mathbb{I}_d}\mid d:(C_f,\mu_\alpha)\text{-generic}\},\]
we obtain the object
$((E_{X,\alpha},\mathcal{I},\Xi_{X,\alpha}),(E^0_{X,\alpha},\nabla^0))\in\dR_{C,\mu}$.
The corresponding solution 
functor coincides with what was defined by 
\eqref{proofSol}. This completes the proof.

We now prove the two claims above. 
Take positive real numbers $\varepsilon_j$ and 
$R_j$ with $R_j>\varepsilon_j$
such that, for all $\ell$, $\gamma_{\ell,d}^{(j)}(t)\in U_j$
    for $|t|<\varepsilon_j$ and 
     $\gamma_{\ell,d}^{(j)}(t)\in \bigsqcup_k V_k$ for $|t|>R_j$.
Accordingly,
we divide the integral into three parts:
\begin{align*}
    \int_{\Gamma_{k,d}^{(j)}}\exp
\left(-z^{-1}\int_{q_j}^x\alpha\right)\omega
=
I_1(z)+I_2(z)+I_3(z).
\end{align*}
Here, $I_1(z)$, $I_2(z)$, and $I_3(z)$ denote
the integrals corresponding to the parameter
ranges
$|t|<\varepsilon_j$, $\varepsilon_j\leq |t|
\leq R_j$, and $|t|>R_j$,
respectively.

Let $J\subset \mathbb{I}_d$ be a
compact subset.  
    By standard arguments of 
   the stationary phase method, 
    $I_2(z)$ and $I_3(z)$
decay rapidly with Gevrey order one 
as $z\to 0$ for $\arg(z)\in J$. 
It remains to consider the asymptotic behavior
of $I_1(z)$. 
We use the notation 
$\Gamma_{k,d}(\varepsilon_j)$
for the sum of the paths restricted to
$|t|<\varepsilon_j$
so that 
\[ I_1(z)=\int_{\Gamma_{k,d}(\varepsilon_j)}
\exp
\left(-z^{-1}\int_{q_j}^x\alpha\right)\omega.\]
On $U_j$, we have $\omega=g_j(u_j)du_j$,
where 
$g_j(u_j)=\sum_n a_nu_j^n $
is a convergent power series.
    Then we have 
    \begin{align*}
        &\int_{\Gamma_{k,d}(\varepsilon_j)}
        \exp\left(-z^{-1}\int_{q_j}^x\alpha\right)
        g_j(u_j)du_j\\&=\sum_{\ell=0}^{m_j-1}
        \int_{\Gamma_{k,d}(\varepsilon_j)}
        \exp\left(-z^{-1}\int_{q_j}^x\alpha\right)
        g_{j,\ell}(u_j)u^\ell_jdu_j
    \end{align*}
where $g_{j,\ell}(u_j)=\sum_{n=0}^\infty a_{m_jn+\ell}u_j^{m_jn+\ell}$. 
Then, by the proof of Lemma 
\ref{localrh} and 
again by the standard arguments of the stationary phase method,
we obtain
\begin{align*}
    &\asy\left[h_{j,k}^{-1}(z)\int_{\Gamma_{k,d}(\varepsilon_j)}
        \exp\left(-z^{-1}\int_{q_j}^x\alpha\right)
        g_{j,\ell}(u_j)u^\ell_jdu_j\right]\\
 =&\asy\left[h_{j,k}^{-1}(z)\int_{C_{k,d}^{(j)}}
         e^{-f_j(u_j)/z}
         g_{j,\ell}(u_j)u^\ell_jdu_j\right]\\
=&\delta_{k,\ell}\widehat{g}_{j,k}(-z),
\end{align*}
where $\widehat{g}_{j,k}(z)\in\C\jump{z}_1$
is the series defined in Theorem \ref{Xihat}. 
This implies the equality $\asy(\mathbb{D}_z(\Xi_{X,\alpha})^d)=
\mathbb{D}_z\widehat{\Xi}_{X,\alpha}$.

We now prove 
the 
commutativity of the diagram \eqref{Diagram}.
Take $C_{k,d}^{(j)}\in \scr{G}_{X,\alpha}({\mathbb{I}_d})$
as in \eqref{Cjld}. 
Then, we 
have
\begin{align*}
    \langle \mathbb{D}_z({\Xi}_{X,\alpha})^d\circ \rh^{\loc}(C_{k,d}^{(j)}),
    [\omega]\rangle &=
    \langle\mathbb{D}_z({\Xi}_{X,\alpha})^d(
    e^{-c_j/z}h_{j,k}[u_j^kdu_j]),[\omega]\rangle\\
    &=e^{-c_j/z}h_{j,k}(z)\langle\mathbb{D}_z({\Xi}_{X,\alpha})^d(
    [u_j^kdu_j]),[\omega]\rangle\\
    &=e^{-c_j/z}\int_{\Gamma_{k,d}^{(j)}}\exp
\left(-z^{-1}\int_{q_j}^x\alpha\right)\omega
\end{align*}
On the other hand,
we have
\begin{align*}
    \rh^\gl\circ \eta_d^{X,\alpha}(C_{k,d}^{(j)})
    &=\rh^\gl\left(\sum_{\ell=0}^{m_j}[\exp(-f/z)\otimes \gamma_{\ell,d}^{(j)}]\right)\\
    &=\int_{\Gamma_{k,d}^{(j)}}e^{-f/z}\omega.
\end{align*}
Since we set
$f(\gamma_{\ell,d}^{(j)}(0))=c_j$
in the definition of $\eta_d^{X,\alpha}$,
the commutativity holds.  
\end{proof}

\begin{remark}
     Theorem \ref{2nd main} can be interpreted as a reformulated variant 
    of the conjecture of Kontsevich--Soibelman
    \cite{kontsevich2024holomorphic}*{Conjecture 4.7.1}
    in this setting. 
    See also \cite{kontsevich2024holomorphic}*{Remark 4.7.1}. 
    It is expected that 
    the higher-dimensional version of 
    this theorem holds for a more general class of pairs $(X,\alpha)$
    consisting of a projective complex manifold $X$ and a meromorphic closed 1-form $\alpha$ on $X$. 
\end{remark}

\subsection{An example}\label{Example G}
We give a motivating example of the theory using the notation
in this section.
\subsubsection{Setting}
 Fix a complex number $\lambda\neq 0$
 and $\log \lambda\in \C$ with $\exp(\log \lambda)=\lambda$. 
 We consider the projective line $X=\P^1$
 with non-homogeneous coordinate $x$.
 We set $\alpha_\lambda=-(\lambda-x)x^{-1}dx$.
 We have $Y=\C^*$, $D=\{0,\infty\}$, and 
 $Z=\{\lambda\}$.
 $L=H_1(Y,\Z)$ is identified with $\Z[c]$
 with generator $c$
 which satisfies 
 $\mu(c)\coloneqq 
 \mu_{\alpha_\lambda}(c)=-2\pi\i \lambda$. 
 The pair $(X,\alpha)$ satisfies Assumption 
 \ref{Assume}.

 The universal covering $\pi_Y\colon \widetilde{Y}\to Y$ is identified with
 $\exp\colon \C\to \C^*$. 
 Then, we set 
 $f\colon \widetilde{Y}=\C\to \C$
 by $f(y)=-\lambda y+e^y$ 
 and $C=C_f=\{\lambda-\lambda\log \lambda\}$,
 which is the critical value of $f$
 at $\log \lambda\in \C$. 
A direction $d\in \R$ is $(C,\mu)$-generic 
if and only if $d\neq \arg \lambda+\pi/2\pmod{\pi}$,
where $\arg\lambda=\mathrm{Im}(\log \lambda)$. 
We use the following notation:
\begin{align*}
     d&\in I_\lambda\coloneqq (\arg \lambda-\pi/2,\arg\lambda+\pi/2),\\
     d'&\in I'_\lambda\coloneqq (\arg \lambda, \arg\lambda+3\pi/2).
\end{align*}

The local coordinate 
$u=u_1$ around $\lambda$
is explicitly given by the equation
\begin{align}\label{Lambert-Stirling}
    u^2/2=
    (x-\lambda)-(\lambda\log x-\lambda\log \lambda)
\end{align}
up to the sign of $u$. 
The local coordinates $v_0$ and $v_\infty$
around $0$ and $\infty$
are taken so that 
$\alpha_\lambda=-\lambda v_0^{-1}dv_0$
and $
\alpha_\lambda=(v_\infty^{-2}+\lambda v_\infty^{-1})dv_\infty$. 

\subsubsection{de Rham objects}
The global de Rham structure is 
given as follows:
\begin{align*}
    E_{X,\alpha}&=
    \Cok\left[\C[x,x^{-1}]\otimes {\C\conv{z}}\xrightarrow{zd-(\lambda-x)x^{-1}dx}\C[x,x^{-1}]dx\otimes {\C\conv{z}}
    \right]\\
    &\simeq \C\conv{z}[x^{-1}dx].
\end{align*}
The local de Rham structure 
is given as follows:
\begin{align*}
    \widehat{E}^0_{X,\alpha}
    &=\Cok\left[\C\jump{u,z}[z^{-1}]
    \xrightarrow{zd+udu}\C\jump{u,z}[z^{-1}]du\right]\\
    &\simeq \C\pole{z}[du].
\end{align*}
We set $E_{X,\alpha}^0={\C\conv{z}}[du]$. 
Here, we have the relation 
\begin{align}\label{equiv}
[u^Ndu]=\begin{dcases}
    (-1)^{N/2}(N-1)!!z^{N/2}[du]
    &(N\equiv 0\pmod{2}),\\
    0&(N\equiv 1\pmod{2}).  
\end{dcases}
\end{align}
By the equation \eqref{Lambert-Stirling},
there exists an analytic function
$\xi_\lambda(u)$ in $u$ with $\xi_\lambda(0)=0$, 
\begin{align}\label{xi(u)}
x=\lambda(1+\xi_\lambda(u)),&&\xi_\lambda(u)-\log(1+\xi_\lambda(u))=\frac{u^2}{2\lambda}.
\end{align}
It follows that 
we have
\[x^{-1}dx=\frac{d\log(1+\xi_\lambda(u))}{du}du=
\left(\frac{d\xi_\lambda}{du}-\frac{u}{\lambda}\right)du.\]
By  \eqref{equiv},
we have
\begin{align}\label{XIGAMMA}
    \widehat{\Xi}_{X,\alpha}([x^{-1}dx])=\sum_{n=0}^\infty 
    (-1)^n(2n+1)!!\xi_{2n+1}(\lambda) z^{n}[du]
\end{align}
where $\xi_\lambda(u)=\sum_{n=0}^\infty \xi_{n}(\lambda)u^n$ denotes the 
Taylor expansion. 
By \eqref{xi(u)}, we may 
compute the first several terms of 
the coefficient of $[du]$ in \eqref{XIGAMMA}
explicitly as follows:
\begin{align*}
    \lambda^{-1/2}-\frac{1}{12}\lambda^{-3/2}z+\frac{1}{288}\lambda^{-5/2}z^2
    +\frac{139}{51840}\lambda^{-7/2}z^3-\cdots.
\end{align*}
We will see the
following later:
\begin{align}\label{Bernoulli}
    \widehat{\Xi}_{X,\alpha}([x^{-1}dx])
    =\lambda^{-1/2}\exp\left(-\sum_{n=1}^\infty \frac{B_{2n}}{2n(2n-1)\lambda^{2n-1}}z^{2n-1}\right)[du],
\end{align}
where $B_{2n}$ $(n=1,2,\dots)$ denote the Bernoulli numbers.
We put 
\[b_\lambda(z)=\sum_{n=1}^\infty \frac{B_{2n}}{2n(2n-1)\lambda^{2n-1}}z^{2n-1}\]
for later use. 
\subsubsection{Betti objects}
The real blow-up
$\widetilde{X}_D=\widetilde{\P}^1_{\{0,\infty\}}$
has two boundaries
$S^1_0$
and $S^1_\infty$. 
The set
$\widetilde{D}^I_{\infty}$ for $\infty$
is given by the formula \eqref{Irregular boundary},
and the set 
$\widetilde{D}^I_0$
is given by the formula \eqref{regular boundary}. 
The locally free $\scr{A}_\mu$-module $\scr{L}_{X,\alpha}$ is of rank one.

The local frame $[e^{-f/z}\otimes \gamma_d]$
(with abbreviation $\gamma_d\coloneqq \gamma_{0,d}^{(1)}$)
of $\scr{L}_{X,\alpha}$
described in the proof of Lemma \ref{etaX}
is given as follows:
If $d\in I_\lambda$,
the path $\gamma_d$ connects 
$\widetilde{D}^I_0$
and $\widetilde{D}^I_\infty$
for suitable $I\subset S^1$.  
If $d'\in I'_\lambda$,
the path $\gamma_{d'}$ is the 
Hankel contour with boundaries in 
$\widetilde{D}^I_\infty$
for suitable $I\subset S^1$.

The local Betti structure is given by the local system $\scr{F}_{X,\alpha}$, which is of rank one with monodromy $-1$. 
The boundary $S^1_u$ of
$\widetilde{\P}^1_u$
has the subset 
\[\widetilde{D}^\theta_u=\{e^{\i\theta_u}\in S^1_u\mid \mathrm{Re}(\exp(-2\pi\i(2\theta_u+\theta)))>0\},\]
which has two connected components. 
In particular, the path
\[c_d(t)=c^{(1)}_{0,d}(t)=t\exp(\pi\i d)
\quad (t\in \R)\]
defines a basis of 
$(\scr{F}_{X,\alpha})_{e^{\i\theta}}=H_1(\widetilde{\P}^1_u,\widetilde{D}_u^\theta)$
for suitable $\theta$.

\subsubsection{Riemann--Hilbert isomorphisms}
The correspondence of 
the local data 
is given by the 
well-known Gaussian integral:
\begin{align*}
    \langle \rh^\loc(C_d),[du]\rangle
    =\int_{c_d}e^{-u^2/z}du=\sqrt{2\pi z}
\end{align*}
for $|\arg z-d|<\pi/2$. In particular, we have
$h(z)=h_{1,0}(z)=\sqrt{2\pi z}$. 

The 
global isomorphism 
$\rh^\gl$
is given as follows.
\begin{align*}
    \langle\rh^\gl([e^{-f/z}\otimes \gamma_d]),[x^{-1}dx] \rangle
    =\int_{\gamma _d} e^{-x/z}x^{\lambda/z}
    \frac{dx}{x}. 
\end{align*}
It follows 
that we have 
\begin{align*}
\langle \mathcal{S}_d(\mathbb{D}_z(\widehat{\Xi}_{X,\alpha}))\mathbb{D}_z[du],[x^{-1}dx]\rangle
=\frac{
\exp((\lambda-\lambda\log \lambda)/z)z^{\lambda/z}\Gamma(\lambda/z)}{\sqrt{2\pi z}}
\end{align*}
when $d\in I_\lambda$.
Here, 
$\mathbb{D}_z[du]$ denotes the 
dual basis. 
Together with the Stirling's formula
    \begin{align*}
        \Gamma(s)\sim e^{-s}s^{s-1/2}(2\pi)^{1/2}\exp(b_1(s^{-1}))
        \quad (-\pi+\varepsilon<\arg s<\pi-\varepsilon,
        0<\varepsilon),
    \end{align*}
we obtain \eqref{Bernoulli}. 
We also obtain the following for $d'\in I'_\lambda$:
\begin{align*}
    \langle \mathcal{S}_{d'}(\mathbb{D}_z(\widehat{\Xi}_{X,\alpha}))\mathbb{D}_z[du],[x^{-1}dx]\rangle
=
\sqrt{2\pi}\frac{z^{\lambda/z-1/2}\i\exp((\lambda-\lambda\log \lambda-\pi\i \lambda)/z)}{\Gamma(1-\lambda/z)}.
\end{align*}

\subsubsection{Explicit descriptions of connections}
Set $\Xi_{X,\alpha}^d=\cal{S}_d(\widehat{\Xi}_{X,\alpha})$ for $d\in I_\lambda$
and $\Xi_{X,\alpha}^{d'}=\cal{S}_{d'}(\widehat{\Xi}_{X,\alpha})$
for $d'\in I'_\lambda$. 
We also set 
$\nabla^{d}\coloneqq 
(\Xi_{X,\alpha}^d)^{-1}\nabla^0\Xi_{X,\alpha}^d$ for $d\in I_\lambda$ and define 
$\nabla^{d'}$ for $d'\in I'_\lambda$ similarly. 
For $d\in I_\lambda$,
we have
\begin{align*}
\nabla^{d}=
d+[\lambda(\psi(\lambda/z)-\log(\lambda/z) )-(\lambda-\lambda\log\lambda)]\frac{dz}{z^2},
\end{align*}
where  
$\psi(s)=d\log\Gamma(s)/ds$ denotes the digamma function, and we set \[-\pi<\arg(\lambda/z)<\pi\]
for the branch of $\log(\lambda/z)$.  
    For $d'\in I_\lambda'$, we have
\begin{align*}
    \nabla^{d'}=d+[\lambda(\psi(1-\lambda/z)-\log(\lambda/z)-\pi \i)-(\lambda-\lambda\log \lambda)]\frac{dz}{z^2},
\end{align*}
where we set $0<\arg(\lambda/z)<2\pi$
for the branch of $\log(\lambda/z)$.

\begin{remark}
One can directly check that the connection forms of $\nabla^d$ and $\nabla^{d'}$ have the asymptotic expansion
of the form 
\begin{align*}
    \widehat{\nabla}=d-\frac{1}{2}\left( 1+\sum_{n=1}^\infty \frac{B_{2n}}{n\lambda^{2n-1}}z^{2n-1}\right)\frac{dz}{z}-(\lambda-\lambda\log\lambda)\frac{dz}{z^2}
\end{align*}
by using the well-known asymptotic expansion
of the digamma function. 
\end{remark}

The connection formula of the gamma function
implies the following: 
    \begin{align*}
        \Xi_{X,\alpha}^d\circ
        (\Xi_{X,\alpha}^{d'})^{-1}=
        \begin{cases}
            1-u&(\text{on }J_+)\\
            1-u^{-1}&(\text{on }J_-),
        \end{cases}
    \end{align*}
    where we set $u=\exp(2\pi\i\lambda/z)$ and 
    $J_{\pm}=\{z\in {S}^1\mid \pm\mathrm{Im}(\lambda/z)>0\}$. 

\begin{remark}
    One can directly check that 
    we have 
    $g_+\nabla^d=\nabla^{d'}g_+$ on $J_+$
    and 
    $g_- \nabla^d=\nabla^{d'}g_-$
    on $J_-$
    for $g_\pm =1-u^{\pm 1}$ using the classical connection formula 
    for the digamma function. 
\end{remark}

\begin{appendix}
    \section{The surjectivity of the map \eqref{I0}}\label{Appendix}
In this appendix,
we prove the surjectivity of the 
map \eqref{I0}.
\subsection{ Fréchet norms 
for sections of $\scr{A}_\mu$}
Let $I\subset  {\mathbb{I}_d}$
be an open arc for some $(C,\mu)$-generic $d$.  
Set 
\[\mathrm{Cpt}({I})\coloneqq 
\{J\subset {I}\mid J \text{ is compact.}\}\]
and consider the set of  maps
\[
\Exp({I})=\{
\mathsf{a}\colon \mathrm{Cpt}({I})\to \R, J\mapsto \mathsf{a}_J\mid 
J\subset J' 
\Longrightarrow\mathsf{a}_{J}\geq \mathsf{a}_{J'}\}.\]
There is a natural inclusion
$\R\hookrightarrow\mathrm{Exp}({I})$
by the constant map.
We also define a partial order as follows: $\mathsf{a}\leq \mathsf{b}$ if and only if 
$\mathsf{a}_J\leq \mathsf{b}_J$
for each $J\in \mathrm{Cpt}({I})$.
The sum $(\mathsf{a}+\mathsf{b})_J\coloneqq 
\mathsf{a}_J+\mathsf{b}_J$
is also defined on $\Exp({I})$. 
Let $\Exp^+({I})$ denote 
the subset of $\Exp({I})$ 
whose elements $\mathsf{a}$
satisfy $\mathsf{a}_J>0$
for any $J$.

Fix a connected open subset 
$\widetilde{I}\subset \widetilde{\C}$
such that $\widetilde{I}\cap S^1=I$.
Consider closed arcs of the form
$J=\{e^{\i\theta}\in S^1\mid |\theta-d'|\leq \Theta\}\subset I$ for $d'\in\R$ and $\Theta>0$. 
Then we use notation 
$\overline{S}(J,\rho)=\overline{S}(d,\Theta,\rho)$ for $\rho>0$
(see notation in \S \ref{Preliminary}).
We set 
\[\CS(\widetilde{I})
\coloneqq\{K=\overline{S}(J,\rho)\subset \widetilde{I}\mid J\in\Cpt(I) \text{ is a closed arc}, \rho>0\}.\]

For $\mathsf{a}\in \mathrm{Exp}({I})$,  
$K=\overline{S}(J,\rho)\in \CS(\widetilde{I})$, 
and a section $f\in \jmath_*\scr{O}_{\C^*}(\widetilde{I})$, 
we set
\begin{align*}
    \|f\|_{\mathsf{a},K}&\coloneqq \sup_{z\in K}
    \exp\left(\mathsf{a}_J/|z|\right)|f(z)|.
\end{align*}   
Then, for each $\mathsf{a}\in \mathrm{Exp}({I})$, we set 
    \begin{align*}
\widetilde{\scr{O}}(\widetilde{I})_{\mathsf{a}}
&\coloneqq 
\{f\in \jmath_*\scr{O}(\widetilde{I})
\mid 
\forall K\in \mathrm{CS}(\widetilde{I}),
\|f\|_{\mathsf{a},K}<\infty\}.
\end{align*}
The image of 
the restriction map
$\widetilde{\scr{O}}(\widetilde{I})_{\mathsf{a}}
\to \widetilde{\scr{O}}(I)$
is denoted by the same symbol. 
The family of semi-norms
$\{\|\cdot \|_{\mathsf{a},K}\mid K\in \CS(\widetilde{I})\}$
together with the usual sup norm on compact subsets 
in $\widetilde{I}\setminus S^1$
defines a Fréchet topology on 
$\widetilde{\scr{O}}(\widetilde{I})_{\mathsf{a}}.$
If $\mathsf{a}\leq \mathsf{b}$, 
then we have $\widetilde{\scr{O}}(\widetilde{I})_\mathsf{a}\supset 
\widetilde{\scr{O}}(\widetilde{I})_\mathsf{b}$.
We also note that 
if $\mathsf{a}\in \Exp^+({I})$,
then we have $\widetilde{\scr{O}}(\widetilde{I})_\mathsf{a}
\subset\scr{A}^{<0}(I)$. 
We also have 
$\widetilde{\scr{O}}(\widetilde{I})_\mathsf{a}\cdot 
\widetilde{\scr{O}}(\widetilde{I})_\mathsf{b}
\subset\widetilde{\scr{O}}(\widetilde{I})_\mathsf{a+b}$
for $\mathsf{a,b}\in \Exp({I})$. 
\begin{lemma}\label{App Lem 1}
    For positive $\varrho<1$ and an
    open arc $I\subset  {\mathbb{I}_d}$ as above, 
    there exists a pair $(\widetilde{I},\mathsf{a})$ of an open subset 
    $\widetilde{I}=\widetilde{I}(\varrho,I)$
    with $\widetilde{I}\cap S^1=I$
    and 
    $\mathsf{a}\in \Exp^+({I})$
    such that 
    \begin{align*}
        \scr{A}_{\mu,\varrho}^{<0}(I)\subset
        \widetilde{\scr{O}}(\widetilde{I})_{\mathsf{a}}.
    \end{align*}
\end{lemma}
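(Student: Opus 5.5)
We reduce the statement to the estimate \eqref{inequality for A} from the proof of the first lemma in \S\ref{Amu}, made uniform over the whole of $\scr{A}_{\mu,\varrho}^{<0}(I)$.

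First we fix the meaning of $\scr{A}_{\mu,\varrho}^{<0}(I)$. It consists of the series $f=\sum_{\mu(\gamma)<_I 0}a_\gamma\exp(\mu(\gamma)/z)$ with $a_0=0$ and $\|f\|_{I,\varrho}<\infty$. For $J\in\Cpt(I)$ we set
\[
\varepsilon_J\coloneqq \inf\{-\cos(\arg\mu(\gamma)-\theta)\mid e^{\i\theta}\in J,\ \gamma\neq 0,\ \mu(\gamma)<_I0\}.
\]
The first step is to check that $\varepsilon_J>0$. For each $\gamma$ with $\mu(\gamma)<_I0$, the direction $\arg\mu(\gamma)$ satisfies $\cos(\arg\mu(\gamma)-\theta)<0$ for every $e^{\i\theta}\in I$. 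Hence $\arg\mu(\gamma)$ lies in the closed arc of directions $\phi$ with $\cos(\phi-\theta)\le 0$ for all $e^{\i\theta}\in \overline{I}$. Since $J\subset I$ is compact and $I$ is open, $J$ has positive distance from $\partial I$. Therefore $-\cos(\phi-\theta)\ge \sin(\mathrm{dist}(J,\partial I))>0$ for all such $\phi$ and all $e^{\i\theta}\in J$. This positivity is the only geometric input. If $J\subset J'$, then $\varepsilon_J\ge\varepsilon_{J'}$, so $J\mapsto\varepsilon_J$ is monotone in the required sense.

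Next we choose the exponent. By the support property \eqref{support property}, for $\arg z\in J$ we have
\[
|\exp(\mu(\gamma)/z)|\le\exp(-\varepsilon_J R\|\gamma\|/|z|).
\]
We take $\mathsf{a}_J\coloneqq \varepsilon_J R/2$, so that $\mathsf{a}\in\Exp^+(I)$. Then
\[
\exp(\mathsf{a}_J/|z|)\,|f(z)|\le\sum_{\gamma}|a_\gamma|\exp\bigl(-\tfrac{\varepsilon_JR}{2}\|\gamma\|/|z|\bigr).
\]
To compare this with $\|f\|_{I,\varrho}=\sum|a_\gamma|\varrho^{\|\gamma\|}$, we need $\exp(-\varepsilon_JR/(2|z|))\le\varrho$. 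This holds as soon as $|z|\le \varepsilon_JR/(2\log(1/\varrho))$; here $\varrho<1$ is used. We therefore choose the radius $\rho_J\coloneqq \varepsilon_JR/(2\log(1/\varrho))$ and set $\widetilde{I}$ to be the union of the open sectors $\{z=re^{\i\theta}\mid 0<r<\rho_J,\ e^{\i\theta}\in J^\circ\}$ over closed arcs $J\subset I$, together with $I$. This set is open and connected, and $\widetilde{I}\cap S^1=I$.

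The main obstacle is a mismatch between the seminorms and the radius. Each $K=\overline{S}(J',\rho)\in\CS(\widetilde{I})$ has radius bounded by some $\rho_J$, and $J'$ is covered by finitely many $J^\circ$. On the part of $K$ over such a $J$ we have $|z|\le\rho_J$, giving $\exp(\mathsf{a}_J/|z|)|f|\le\|f\|_{I,\varrho}$, and monotonicity gives $\mathsf{a}_{J'}\le\mathsf{a}_J$ for $J\subset J'$. However, $K$ need not sit over a single $J$ with $\rho\le\rho_J$. To handle this, we use $\mathsf{a}_{J'}\le \min\mathsf{a}$ over the finitely many pieces covering $J'$, and we replace $\exp(\mathsf{a}_{J'}/|z|)$ by $\exp(\mathsf{a}_J/|z|)$ on each piece. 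This yields $\|f\|_{\mathsf{a},K}\le\|f\|_{I,\varrho}<\infty$. Holomorphy of $f$ on $\widetilde{I}$ follows from the same bound, since the series converges normally on compact sets. Hence $f\in\widetilde{\scr{O}}(\widetilde{I})_{\mathsf{a}}$, which proves the inclusion.
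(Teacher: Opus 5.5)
Your route is the paper's: the same $\varepsilon_J$ (your positivity argument for it is correct, and fills in a point the paper takes for granted), the same $\widetilde I$ built from sectors of radius proportional to $\varepsilon_JR/\log(1/\varrho)$, and the same splitting of the decay $\varepsilon_JR\|\gamma\|/|z|$ into a part giving the uniform factor $e^{-\mathsf a_J/|z|}$ and a part absorbing $\varrho^{\|\gamma\|}$. However, two steps fail as written.

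\textbf{The choice of $\mathsf a_J$.} With $\mathsf a_J=\varepsilon_JR/2$ you get $e^{\mathsf a_J/|z|}e^{-\varepsilon_JR\|\gamma\|/|z|}=e^{-\varepsilon_JR(\|\gamma\|-1/2)/|z|}$. This is at most $e^{-\varepsilon_JR\|\gamma\|/(2|z|)}$ only when $\|\gamma\|\ge 1$. The norm on $L\otimes_\Z\R$ is arbitrary, so nonzero lattice vectors may be shorter than $1$. You need $\mathsf m_L=\min_{\gamma\neq 0}\|\gamma\|>0$, which is positive because $L$ is discrete; for instance take $\mathsf a_J=\varepsilon_JR\mathsf m_L/2$. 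This is exactly why the paper introduces $\mathsf m<\mathsf m_L$: it splits $\|\gamma\|=\mathsf m+(\|\gamma\|-\mathsf m)$, and the factor $1-\mathsf m/\mathsf m_L$ in the radius of $\widetilde I_J$ handles the second part.

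\textbf{The covering step.} The arcs $J_1,\dots,J_N$ whose interiors cover $J'$ cannot all lie inside $J'$, since each endpoint of $J'$ must be an interior point of some $J_i$. So monotonicity does not give $\mathsf a_{J'}\le\mathsf a_{J_i}$. The inequality genuinely fails. The infimum defining $\varepsilon_{J'}$ is approached at an endpoint of $J'$. A piece $J_i$ protruding beyond that endpoint towards $\partial I$ then has $\varepsilon_{J_i}<\varepsilon_{J'}$.

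The repair is pointwise. For $z\in K\cap\widetilde I_{J_i}$ you have $\arg z\in J'\cap J_i$. Hence $|e^{\mu(\gamma)/z}|\le e^{-\varepsilon^*R\|\gamma\|/|z|}$ with $\varepsilon^*=\max(\varepsilon_{J'},\varepsilon_{J_i})$. With the corrected $\mathsf a_{J'}=\varepsilon_{J'}R\mathsf m_L/2$ this gives
$e^{\mathsf a_{J'}/|z|}|e^{\mu(\gamma)/z}|\le e^{-\varepsilon^*R\|\gamma\|/(2|z|)}\le e^{-\varepsilon_{J_i}R\|\gamma\|/(2|z|)}\le\varrho^{\|\gamma\|}$,
where the last inequality uses $|z|<\rho_{J_i}$. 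This restores your bound $\|f\|_{\mathsf a,K}\le\|f\|_{I,\varrho}$. Equivalently, if $\varepsilon_{J_i}<\varepsilon_{J'}$ then $|z|<\rho_{J_i}<\rho_{J'}$, and the $J'$-estimate applies directly.

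The paper handles this differently. For $K$ over an arc $J$, it fixes one larger arc $J''$ with $J\subset(J'')^\circ$ and chooses $\mathsf a_J<\varepsilon_{J''}R\mathsf m$. It proves the estimate on $K\cap\widetilde I_{J''}$. On the rest of $K$, $|z|$ is bounded below, so $e^{\mathsf a_J/|z|}$ and $|f|$ are bounded there; the paper uses this implicitly. Once repaired, your finite-covering argument makes that last step explicit and even gives the uniform constant $\|f\|_{I,\varrho}$.
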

\begin{proof}
Take $\varrho>0$ and $I$
as in the claim. 
Choose a positive real number $\mathsf{m}$ such that   
$\mathsf{m}<\mathsf{m}_L
\coloneqq \min_{\gamma\in L}\|\gamma\|$.  
Define a map 
$\varepsilon\colon \Cpt(I)\to \R_{>0}$,
$J\mapsto \varepsilon_J$
by
\begin{align*}
    \varepsilon_J\coloneqq 
    -\max\{\cos(\arg(\mu(\gamma))-\theta)
    \mid \gamma\in L,\mu(\gamma)<_I0,
    \theta\in J\}.
\end{align*}

 For each $J\in \Cpt(I)$, let $J^\circ$
 denote the interior and 
put \[\widetilde{I}_J\coloneqq \{(z,w)\in \widetilde{\C}\mid
w\in J^\circ, |z|< -(\log \varrho)^{-1} 
\varepsilon_JR (1-\mathsf{m}/\mathsf{m}_L)\}.\]
Set $\widetilde{I}=\bigcup_{J\in \Cpt(I)}\widetilde{I}_J$. 
We then take $\mathsf{a}\in \Exp^+({I})$
with the following property:
For any $J\in \Cpt({I})$,
there is $ J'\in\Cpt(I)$
such that $(J')^\circ \supset J $ and 
$\mathsf{a}_J<\varepsilon_{J'}R\mathsf{m}$.

By the 
inequality \eqref{inequality for A},
for any $f(z)=\sum_{\mu(\gamma)<_I}a_\gamma \exp(\mu(\gamma)/z)\in \scr{A}_{\mu,\varrho}^{<0}(I)$
and any $K=\overline{S}(J,\rho)\in \CS(\widetilde{I})$,
taking $J'$ with 
$J\subset (J')^\circ$ and $\mathsf{a}_J<\varepsilon_{J'}R\mathsf{m}$,
we have
\begin{align*}
    |f(z)|&\leq \sum_{\mu(\gamma)<_I 0 }
    |a_\gamma|\exp(-\varepsilon_{J'} R\|\gamma\|/|z|)\\
    &= \exp(-\varepsilon_{J'}
    R \mathsf{m}/|z|)
    \sum_{\mu(\gamma)<_I0}|a_\gamma|\exp
    (-\varepsilon_{J'}
    R\|\gamma\|
    (1-\mathsf{m}/\|\gamma\|)/|z| )\\
    &\leq \exp(-\mathsf{a}_J/|z|)
    \sum_{\mu(\gamma)<_I 0}|
    a_\gamma|\varrho^{\|\gamma\|}
    =\exp(-\mathsf{a}_J/|z|)\|f\|_{I,\varrho}
\end{align*}
for $z\in K\cap \widetilde{I}_{J'}$.
Hence $\scr{A}^{<0}_{\mu,\varrho}(I)\subset 
\widetilde{\scr{O}}(\widetilde{I})_{\mathsf{a}}$. 
\end{proof}
For a complex number $c$,
we set 
\begin{align*}
    \scr{A}_{\mu,\varrho}^{<c}(I)\coloneqq
    \sum_{\mu(\gamma)\leqslant_I c}
    \exp(\mu(\gamma)/z)\scr{A}_{\mu,\varrho}^{< 0}(I).
\end{align*}
We have $\scr{A}_{\mu}^{<c}(I)=\bigcup_{\varrho>0}\scr{A}_{\mu,\varrho}^{<c}(I)$. 
For $c\in \C$,
we define
$\cpx(c)\in \Exp(I)$
as follows: For $J\in \Cpt(I)$, we set 
\[\cpx(c)_J\coloneqq 
-\max_{e^{\i\theta }\in J}
\mathrm{Re}(e^{-\i\theta }c).\]
\begin{corollary}
    For $c\in \C$, 
    we have 
        $\scr{A}_{\mu,\varrho}^{<c}(I)
        \subset \widetilde{\scr{O}}(\widetilde{I})_{\mathsf{a}+\mathsf{cpx}(c)}.$\qed
\end{corollary}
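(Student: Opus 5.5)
The plan is to reduce the statement to Lemma~\ref{App Lem 1} applied summand by summand. By definition,
\[
\scr{A}_{\mu,\varrho}^{<c}(I)=\sum_{\mu(\gamma)\leqslant_I c}\exp(\mu(\gamma)/z)\,\scr{A}_{\mu,\varrho}^{<0}(I).
\]
By Lemma~\ref{App Lem 1} we already have $\scr{A}_{\mu,\varrho}^{<0}(I)\subset \widetilde{\scr{O}}(\widetilde{I})_{\mathsf{a}}$. Since $\widetilde{\scr{O}}(\widetilde{I})_{\mathsf{b}}$ is a vector space for each $\mathsf{b}$, and the spaces decrease as $\mathsf{b}$ grows, it suffices to prove two things for every $\gamma$ with $\mu(\gamma)\leqslant_I c$:
\begin{itemize}
\item $\exp(\mu(\gamma)/z)\in\widetilde{\scr{O}}(\widetilde{I})_{\mathsf{cpx}(c)}$;
\item multiplication respects the indices, i.e.\ $\widetilde{\scr{O}}(\widetilde{I})_{\mathsf{a}}\cdot\widetilde{\scr{O}}(\widetilde{I})_{\mathsf{b}}\subset\widetilde{\scr{O}}(\widetilde{I})_{\mathsf{a}+\mathsf{b}}$.
\end{itemize}
The second is already recorded just before Lemma~\ref{App Lem 1}. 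So the whole content is the estimate for a single exponential.

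For the first point, fix $K=\overline{S}(J,\rho)\in\CS(\widetilde{I})$ and compute
\[
\exp(\mathsf{cpx}(c)_J/|z|)\,|\exp(\mu(\gamma)/z)|
=\exp\Bigl(\bigl(\mathrm{Re}(e^{-\i\theta}\mu(\gamma))-\max_{e^{\i\theta'}\in J}\mathrm{Re}(e^{-\i\theta'}c)\bigr)/|z|\Bigr),
\qquad \theta=\arg z\in J.
\]
Since $\mu(\gamma)\leqslant_\theta c$ for every $e^{\i\theta}\in J$, we get $\mathrm{Re}(e^{-\i\theta}\mu(\gamma))\le\mathrm{Re}(e^{-\i\theta}c)\le\max_J\mathrm{Re}(e^{-\i\theta'}c)$. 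So the exponent is $\le 0$ and the seminorm is at most $1$. The usual sup norms on compacts away from $S^1$ are finite trivially, and monotonicity of $\mathsf{cpx}(c)$ in $J$ is clear from the definition. Hence $\mathsf{cpx}(c)\in\Exp(I)$ and the exponential lies in $\widetilde{\scr{O}}(\widetilde{I})_{\mathsf{cpx}(c)}$.

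The only point needing care is that elements of $\scr{A}_{\mu,\varrho}^{<c}(I)$ are sums, possibly infinite, over $\gamma$, not finite sums. If the sum is meant algebraically (finite sums), the argument above concludes directly. If it is an infinite convergent sum, I would instead factor out a single $\exp(c_0/z)$ with $\mu(\gamma)-c_0$ ranging over a set with $\leqslant_I 0$. I would then rerun the estimate \eqref{inequality for A} from the proof of Lemma~\ref{App Lem 1} with the shifted exponent, absorbing the extra $\mathrm{Re}(e^{-\i\theta}\mu(\gamma))$ terms into $\mathsf{cpx}(c)_J$ by the same inequality. I expect this bookkeeping to be the main (mild) obstacle.
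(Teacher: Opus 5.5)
Your proof is correct and is the routine verification the paper leaves implicit: Lemma~\ref{App Lem 1} for the factor in $\scr{A}_{\mu,\varrho}^{<0}(I)$, the bound $\|\exp(\mu(\gamma)/z)\|_{\mathsf{cpx}(c),K}\le 1$ whenever $\mu(\gamma)\leqslant_I c$, and the multiplicativity $\widetilde{\scr{O}}(\widetilde{I})_{\mathsf{a}}\cdot\widetilde{\scr{O}}(\widetilde{I})_{\mathsf{b}}\subset\widetilde{\scr{O}}(\widetilde{I})_{\mathsf{a}+\mathsf{b}}$. Since $\scr{A}_{\mu,\varrho}^{<c}(I)$ is defined as a sum of subspaces, i.e.\ finite sums, your contingency plan for infinite sums is not needed.
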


\subsection{Fréchet norms 
for sections of
graded $\scr{A}_\mu$-modules}
We use the notation $C=\{c_1,\dots, c_m\}$.
Let $\scr{G}$ be a 
graded $\scr{A}_\mu$-module
in $\Be_{C,\mu}$. 
We have a decomposition 
\[\scr{G}=\bigoplus_{j=1}^m \scr{G}_j\]
where $\scr{G}_j$ denotes
the summand such that $\gr_c\scr{G}_j=0$
for $c\notin c_j+\mu(L)$. 
 
For $\varrho>0$ with $\varrho<1$
and $c\in C+\mu(L)$,
set 
\begin{align*}
    H^0(I,(\scr{G}_{j})_{<c})_\varrho\coloneqq 
    \scr{A}_{\mu,\varrho}^{<c-c_j}(I)\otimes_\C
    H^0(I,\gr_{c_j}\scr{G}).
\end{align*}
 We have $ H^0(I,(\scr{G}_j)_{<c})=\bigcup_{\varrho>0}H^0(I,(\scr{G}_j)_{<c})_\varrho$. 
Fix norms $\|\cdot \|_j$ on $H^0(I,\gr_{c_j}\scr{G})$.
We define semi-norms
\[ 
\{\|\cdot\|_{j,c, \mathsf{b},K}\mid  
\mathsf{b}\in \Exp(I), \mathsf{b}\geq0,
K\in \CS(\widetilde{I})\}
\]
on subspaces of $H^0(I,(\scr{G}_j)_{<c})_{\varrho}$
as follows:
For a section $g=\sum_{k} g_k(z) v_k$,
with 
$g_k(z)\in\scr{A}_{\mu,\varrho}^{<c-c_j}(I)$
and $v_k\in H^0(I,\gr_{c_j}\scr{G})$,
$K=\overline{S}(J,\rho)\in \CS(\widetilde{I})$,
and $\mathsf{b}\in \Exp({I})$
with $\mathsf{b}\geq 0$,
we set 
\begin{align*}
    \|g\|_{j,c,\mathsf{b},K}
    &\coloneqq \sup_{z\in K}\exp((\mathsf{a}+\mathsf{b}+\cpx(c-c_j))_J/|z|)\left\|\sum_{k}g_k(z)v_k
    \right\|_j \\
    &\leq \sum_k \|g_k\|_{\mathsf{a+b+cpx}(c-c_j),K}\|v_k\|_j. 
\end{align*}
We then set 
$\|\cdot\|_{\scr{G},c,\mathsf{b},K}
=\sum_j\|\cdot\|_{j,c,\mathsf{b},K}$
on $H^0(I,\scr{G}_{<c})_{\varrho}
\coloneqq \bigoplus_j H^0(I,(\scr{G}_{j})_{<c})_\varrho$.
We then obtain the subspaces
indexed by $\mathsf{b}\in\Exp(I)$:
\begin{align*}
    H^0(I,\scr{G}_{<c})_{\varrho,\mathsf{b}}
    \coloneqq \{g\in H^0(I,\scr{G}_{<c})_{\varrho}\mid 
    \|g\|_{\scr{G},c,K,\mathsf{b}}<\infty\}. 
\end{align*}
Each $H^0(I,\scr{G}_{<c})_{\varrho,\mathsf{b}}$
is a Fréchet space in a natural way.

The sheaf 
$\scr{E}nd(\scr{G})=\homscr(\scr{G},\scr{G})$ is also a
graded $\scr{A}_\mu$-module. 
For $0<\varrho<1$, set 
\begin{align*}
    H^0(I,\scr{E}nd^{<0}(\scr{G}))_\varrho
    \coloneqq 
    \bigoplus_{1\leq i,j\leq m}
    \scr{A}_{\mu,\varrho}^{<c_j-c_i}
    (I)\otimes H^0(I,
    \homscr
    (\scr{G}_{c_j},\scr{G}_{c_i})). 
\end{align*}
We have 
$H^0(I,\scr{E}nd^{<0}(\scr{G})) 
=\bigcup_{0<\varrho<1}H^0(I,\scr{E}nd^{<0}(\scr{G}))_\varrho
$. 
\begin{lemma}
    For 
    a section
    $\Psi\in H^0(I,\scr{E}nd^{<0}(\scr{G}))_\varrho$
    and $\mathsf{b}\in \Exp({I})$
    with $\mathsf{b}\geq 0$,
    we have  
    \begin{align*}
\Psi(H^0(I,\scr{G}_{<c}
)_{\varrho,\mathsf{b}})
\subset H^0(I,\scr{G}_{<c}
)_{\varrho,\mathsf{a+b}}.
    \end{align*}
\end{lemma}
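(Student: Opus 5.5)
The plan is to reduce the statement, by linearity and by the block decomposition of $\Psi$, to a single estimate for a product of one scalar function and one homomorphism of graded pieces, and then to read that estimate off from the norm bounds already established for sections of $\scr{A}_{\mu,\varrho}$.

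First I split $\Psi$ along the decomposition defining $H^0(I,\scr{E}nd^{<0}(\scr{G}))_\varrho$. This writes $\Psi$ as a finite sum $\Psi=\sum_{i,j}\sum_p \psi^{ij}_p(z)\,\phi^{ij}_p$ with
\[
\psi^{ij}_p\in\scr{A}_{\mu,\varrho}^{<c_j-c_i}(I),\qquad \phi^{ij}_p\in H^0(I,\homscr(\scr{G}_{c_j},\scr{G}_{c_i})).
\]
Each $\phi^{ij}_p$ is a flat, $z$-independent linear map between the finite dimensional spaces $H^0(I,\gr_{c_j}\scr{G})$ and $H^0(I,\gr_{c_i}\scr{G})$. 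Hence it is bounded for the fixed norms $\|\cdot\|_j$ and $\|\cdot\|_i$ by some constant $M$.

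Next I fix $g=\sum_k g_k v_k\in H^0(I,(\scr{G}_j)_{<c})_{\varrho,\mathsf{b}}$ and look at a single term $\psi\phi$. This term sends $g$ to $\sum_k(\psi g_k)\,\phi(v_k)$, which lies in $(\scr{G}_i)_{<c}$. The coefficient exponents match: $c-c_j$ plus $c_j-c_i$ gives $c-c_i$. I need the product $\psi g_k$ to lie in $\scr{A}^{<c-c_i}_{\mu,\varrho}(I)$ with the same $\varrho$. This follows directly from the series description, since $\|\cdot\|_{I,\varrho}$ is submultiplicative, because $\varrho^{\|\gamma+\gamma'\|}\ge\varrho^{\|\gamma\|+\|\gamma'\|}$ when $\varrho<1$.

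The key estimate is pointwise. By Lemma \ref{App Lem 1} and its Corollary, for $z\in K=\overline{S}(J,\rho)$ we have
\[
|\psi(z)|\le C_K\exp\bigl(-(\mathsf{a}+\cpx(c_j-c_i))_J/|z|\bigr).
\]
We also have
\[
\bigl\|\textstyle\sum_k g_k(z)v_k\bigr\|_j\le \|g\|_{j,c,\mathsf{b},K}\exp\bigl(-(\mathsf{a}+\mathsf{b}+\cpx(c-c_j))_J/|z|\bigr).
\]
Multiplying these two bounds and using the bound $M$ on $\phi$ gives
\[
\exp\bigl((\mathsf{a}+\mathsf{a}+\mathsf{b}+\cpx(c-c_i))_J/|z|\bigr)\,\bigl\|\psi\phi(g)(z)\bigr\|_i\le MC_K\|g\|_{j,c,\mathsf{b},K}.
\]
This uses subadditivity of $\cpx$, namely $\cpx(x)_J+\cpx(y)_J\le\cpx(x+y)_J$. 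That inequality holds because the maximum of a sum is at most the sum of the maxima. The direction is favorable, since we only need an upper bound on $\cpx(c-c_i)$ in the exponent. So each semi-norm $\|\cdot\|_{i,c,\mathsf{a+b},K}$ of $\Psi(g)$ is finite, and summing over $i,j,p$ gives the claim.

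The main obstacle is making sure the exponents really add with the right sign, so that $\mathsf{a}$ appears once more and nothing is lost. I expect this to be handled exactly by the subadditivity of $\cpx$ above. The other thing to check is that the constant $C_K$ is uniform in $z\in K$. I would get this from the proof of Lemma \ref{App Lem 1}, possibly after shrinking $\widetilde{I}$ to a common domain for both factors.
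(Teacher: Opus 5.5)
Your route is the same as the paper's: split $\Psi$ into blocks $\psi\,\phi$, bound $\phi$ by a constant, and multiply the pointwise bounds. However, the one step you make explicit fails. Your two bounds multiply to
\[
\|\psi\phi(g)(z)\|_i\le MC_K\|g\|_{j,c,\mathsf{b},K}\exp\Bigl(-\bigl(2\mathsf{a}+\mathsf{b}+\cpx(c_j-c_i)+\cpx(c-c_j)\bigr)_J/|z|\Bigr),
\]
so your displayed estimate requires $\cpx(c-c_i)_J\le\cpx(c_j-c_i)_J+\cpx(c-c_j)_J$. The inequality you prove, $\cpx(x)_J+\cpx(y)_J\le\cpx(x+y)_J$, is the opposite one. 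It bounds $\cpx(c-c_i)_J$ from below, so the direction is unfavorable, not favorable.

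For off-diagonal blocks the defect $\Delta_J:=\cpx(c-c_i)_J-\cpx(c_j-c_i)_J-\cpx(c-c_j)_J\geq 0$ is in general strictly positive. For example, when $c=c_i\neq c_j$, $\Delta_J$ is the oscillation of $\theta\mapsto\mathrm{Re}(e^{-\i\theta}(c_j-c_i))$ over $J$, which is positive for every arc $J$ with nonempty interior. The leftover factor $\exp(\Delta_J/|z|)$ is unbounded on $K$ as $|z|\to0$. So your estimate does not bound $\|\Psi(g)_i\|_{i,c,\mathsf{a}+\mathsf{b},K}$. The paper's one-line proof asserts $\psi g\in\widetilde{\scr{O}}(\widetilde{I})_{2\mathsf{a}+\mathsf{b}+\cpx(c-c_i)}$ ``by assumption'', implicitly treating $\cpx$ as additive. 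This is therefore precisely the point that needs an argument.

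The cause is that $\cpx(\cdot)_J$ is a weight constant in $\theta\in J$, hence only superadditive. A clean repair is to use the exact modulus, i.e.\ require $e^{-(c-c_j)/z}g_k\in\widetilde{\scr{O}}(\widetilde{I})_{\mathsf{a}+\mathsf{b}}$. Three facts make this work:
\begin{itemize}
\item The Corollary holds in the sharper form $\scr{A}^{<c}_{\mu,\varrho}(I)\subset e^{c/z}\widetilde{\scr{O}}(\widetilde{I})_{\mathsf{a}}$, since $|e^{(\mu(\gamma)-c)/z}|\le1$ for $\arg z\in I$ when $\mu(\gamma)\leqslant_I c$.
\item The identity $|e^{-(c_j-c_i)/z}|\,|e^{-(c-c_j)/z}|=|e^{-(c-c_i)/z}|$ makes your multiplication lossless.
\item Since $|e^{-w/z}|\ge e^{\cpx(w)_J/|z|}$ on $K$, these seminorms dominate the paper's, which suffices for the use in Corollary~\ref{Appcor}.
\end{itemize}
Alternatively, you can keep the $\cpx$ weights and estimate on short sub-arcs $J'\subset J$. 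There $\Delta_{J'}\le|J'|(|c_j-c_i|+|c-c_j|)$ can be absorbed into half of $\mathsf{a}_J>0$, which yields the gain $\mathsf{a}/2$ instead of $\mathsf{a}$.

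Separately, your justification for keeping the same $\varrho$ is reversed in the same way. The bound $\|fg\|_{I,\varrho}\le\|f\|_{I,\varrho}\|g\|_{I,\varrho}$ would need $\varrho^{\|\gamma+\gamma'\|}\le\varrho^{\|\gamma\|+\|\gamma'\|}$. For $\varrho<1$ this means $\|\gamma+\gamma'\|\ge\|\gamma\|+\|\gamma'\|$, which fails for non-collinear $\gamma,\gamma'$ under, e.g., a Euclidean norm in rank two. On the cone $\{\mu(\gamma)<_I0\}$ the support property makes $\|\cdot\|$ comparable to the additive functional $\gamma\mapsto-\mathrm{Re}(e^{-\i\theta_0}\mu(\gamma))$. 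That only gives $\|fg\|_{I,\varrho^{\kappa}}\le\|f\|_{I,\varrho}\|g\|_{I,\varrho}$ for some $\kappa\ge1$. So either $\varrho$ must be allowed to shrink, or this point needs a separate argument.
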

\begin{proof}
By definition,
we have an expression
\[\Psi=\sum_{i,j=1}^m \sum_{k=1}^{n_{ij}}\psi_{ijk}(z)\otimes \Psi_{ijk}\]
where $\psi_{ijk}(z)\in \scr{A}_{\mu,\varrho}^{<c_i-c_j}(I)$
and $\Psi_{ijk}\in H^0(I,\homscr(\scr{G}_j,\scr{G}_i))$.
Take any section 
$g\in H^0(I,\scr{G}_{<c}
)_{\varrho,\mathsf{b}}$.
We have the expression
\[g=\sum_{j=1}^m 
\sum_{\ell} g_{j,\ell}(z)v_{j,\ell},\]
with 
$g_{j,\ell}(z)\in
\scr{A}_{\mu,\varrho}^{c-c_j}(I)$
and $v_{j,\ell}
\in H^0(I,\gr_{c_j}\scr{G})$. 
Then, the image
$\Psi(g)=\sum_i\Psi(g)_i$,
$\Psi(g)_i\in 
H^0(I,(\scr{G}_i)_{<c})_{\varrho}$
is expressed as follows:
\begin{align*}
    \Psi(g)_i=\sum_{j=1}^m \sum_{k,\ell}\psi_{ijk}(z)g_{j,\ell}(z)
    \Psi_{ijk}(v_{j,\ell}).
\end{align*}
By assumption,
we have
$\psi_{ijk}(z)g_{j,\ell}(z)
\in \widetilde{\scr{O}}(\widetilde{I})_{2\mathsf{a}+\mathsf{b}+
\mathsf{cpx}(c-c_i)}$,
which implies the claim. 
\end{proof}

\begin{corollary}\label{Appcor}
    For $\Psi\in 
    H^0(I,\scr{E}nd^{<0}
    (\scr{G}))_{\varrho}$
    and 
    $g\in H^0(I,\scr{G}_{<c})_\varrho$,
    the infinite sum 
    $h_\Psi(g)\coloneqq 
    \sum_{n=0}^\infty \Psi^n(g)$
    converges in the Fréchet topology. \qed
\end{corollary}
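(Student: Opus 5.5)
The statement is Corollary \ref{Appcor}: for $\Psi\in H^0(I,\scr{E}nd^{<0}(\scr{G}))_\varrho$ and $g\in H^0(I,\scr{G}_{<c})_\varrho$, the series $\sum_{n\ge0}\Psi^n(g)$ converges. The plan is to iterate the preceding lemma. That lemma shows $\Psi$ raises the exponential weight by the fixed positive gauge $\mathsf{a}\in\Exp^+(I)$ supplied by Lemma \ref{App Lem 1}. So $\Psi^n(g)$ lies in $H^0(I,\scr{G}_{<c})_{\varrho,n\mathsf{a}}$. On each closed sector $K=\overline{S}(J,\rho)\in\CS(\widetilde{I})$ this gives a factor $\exp(-n\mathsf{a}_J/|z|)$. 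That factor is only small near $z=0$, so it yields geometric decay only for $|z|$ small. Hence I will instead prove an operator-norm bound of the form
\[
\|\Psi(h)\|_{\scr{G},c,\mathsf{b},K}\leq M_K\,\sup_{z\in K}e^{-\mathsf{a}_J/|z|}\,\|h\|_{\scr{G},c,\mathsf{b},K},
\]
and arrange for the constant on the right to be $<1$.

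The first step is to expand $\Psi=\sum\psi_{ijk}\otimes\Psi_{ijk}$ as in the proof of the lemma. The coefficients satisfy $\psi_{ijk}\in\scr{A}^{<c_i-c_j}_{\mu,\varrho}(I)\subset\widetilde{\scr{O}}(\widetilde{I})_{\mathsf{a}+\cpx(c_i-c_j)}$ by the Corollary after Lemma \ref{App Lem 1}. The sections $\Psi_{ijk}$ are flat, so they have bounded operator norms $\|\Psi_{ijk}\|$ for the fixed norms $\|\cdot\|_j$ on the local systems over the compact arc $J$.

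Next I combine the weights. The exponents add, $\cpx(c-c_j)+\cpx(c_j-c_i)\ge\cpx(c-c_i)$, since $\cpx$ is superadditive as a negated max. This gives, pointwise on $K$,
\[
e^{(\mathsf{b}+\cpx(c-c_i))_J/|z|}\,\|\Psi(h)_i(z)\|_i\leq \Bigl(\sum_{j,k}\|\psi_{ijk}\|_{\mathsf{a}+\cpx(c_i-c_j),K}\,\|\Psi_{ijk}\|\Bigr)e^{-\mathsf{a}_J/|z|}\,\|h\|_{\scr{G},c,\mathsf{b},K}.
\]
I will track the weights with $\mathsf{b}$ in place of $\mathsf{a}+\mathsf{b}$, because the seminorm $\|\cdot\|_{\mathsf{b}}$ already includes $\mathsf{a}$; the bookkeeping is harmless. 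Shrinking $\rho$, equivalently shrinking $\widetilde{I}$ near $S^1$, makes the factor $M_Ke^{-\mathsf{a}_J/\rho}$ at most $1/2$. The sup norms on compact subsets of $\widetilde{I}\setminus S^1$ need a separate treatment. There I would cover the region by sectors of small radius and use that the Fréchet topology on germs only depends on a neighbourhood of $I$. Alternatively, I would note that the coefficient functions $\psi_{ijk}$ are bounded by $\|\psi\|_{I,\varrho}$ times $\varrho^{-\ldots}$ via \eqref{inequality for A}, provided $|z|$ is small enough relative to $\varrho$. This is exactly why $\widetilde{I}(\varrho,I)$ was cut off at radius $\sim-(\log\varrho)^{-1}$.

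With the operator bound in hand, $\sum_n\|\Psi^n g\|\le\sum_n 2^{-n}\|g\|$ on each seminorm, so the series is Cauchy and converges in the Fréchet space $H^0(I,\scr{G}_{<c})_{\varrho,\mathsf{0}}$. The main obstacle is the uniformity: the constant $M_K$ must be controlled so that a single shrinking of $\widetilde{I}$ works for all $K$ simultaneously, or at least for a cofinal family. I expect to handle this by the monotonicity built into $\Exp(I)$, where $\mathsf{a}_J$ is decreasing in $J$, together with a choice of $\rho_J$ depending on $J$. This is compatible with $\widetilde{I}=\bigcup_J\widetilde{I}_J$.
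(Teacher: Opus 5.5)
Your route is the paper's: the corollary is meant as an immediate iteration of the preceding lemma. Your observation that the gain $e^{-\mathsf{a}_J/|z|}$ is a contraction only for small $|z|$, so that $\widetilde{I}$ must be shrunk, is correct, and the paper's one-line argument passes over it. However, the operator bound everything rests on uses a reversed inequality. Since $\cpx(x)_J=-\max_{e^{\i\theta}\in J}\mathrm{Re}(e^{-\i\theta}x)$, and the maximum of a sum is at most the sum of the maxima, superadditivity reads $\cpx(x+y)\geq\cpx(x)+\cpx(y)$. In your notation this is $\cpx(c-c_j)+\cpx(c_j-c_i)\leq\cpx(c-c_i)$, not $\geq$. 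For instance, with $J=\{e^{\i\theta}\mid|\theta|\leq\beta\}$, $x=\i$, $y=-\i$, one gets $\cpx(x)_J+\cpx(y)_J=-2\sin\beta<0=\cpx(x+y)_J$. So bounding the coefficient of $\Psi$ and the components $h_j$ separately in the arc-weighted seminorms gives your display only up to a factor $e^{\delta_J/|z|}$, where $\delta_J=\cpx(c-c_i)_J-\cpx(c-c_j)_J-\cpx(c_j-c_i)_J\geq0$. This factor is unbounded as $z\to0$ on off-diagonal blocks and can swamp the gain $e^{-\mathsf{a}_J/|z|}$. Hence $\|\Psi(h)\|_K\leq M_K\sup_Ke^{-\mathsf{a}_J/|z|}\|h\|_K$ is not established. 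The proof of the preceding lemma takes the same step tacitly, so quoting it does not help.

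The missing idea is to iterate with exact, additive weights instead of the worst-case ones $\cpx$. Conjugate by $D(z)=\mathrm{diag}(e^{-(c-c_j)/z})_j$ and write every coefficient as a finite sum $\sum_m e^{\mu(\gamma_m)/z}h_m$ with $h_m\in\scr{A}^{<0}_{\mu,\varrho}(I)$. Each exponential factor is then cancelled by $D$ up to modulus $\leq1$. So \eqref{inequality for A} and the proof of Lemma \ref{App Lem 1} give $\|D\Psi D^{-1}(z)\|\leq Me^{-\mathsf{a}_J/|z|}$ and $\|D(z)g(z)\|\leq C_ge^{-\mathsf{a}_J/|z|}$ for $\arg z\in J$ and $|z|$ small, with $M$ independent of $J$. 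You return to the paper's seminorms via $e^{\cpx(x)_J/|z|}\leq|e^{-x/z}|$ for $\arg z\in J$. Alternatively, work on narrow arcs. There $\delta_J\leq(|c-c_j|+|c_j-c_i|)\,\mathrm{width}(J)$. Monotonicity of all weights in $\Exp(I)$ lets you dominate the seminorm on $\overline{S}(J,\rho)$ by those on $\overline{S}(J_\ell,\rho)$ for a cover of $J$ by thin subarcs, and on these $\mathsf{a}_{J_\ell}\geq\mathsf{a}_J>\delta_{J_\ell}$.

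Your alternative justification is also wrong: the cutoff of $\widetilde{I}(\varrho,I)$ at radius $\sim-(\log\varrho)^{-1}$ does not make $\Psi$ contracting, because that radius depends only on $\varrho$ and not on the size of $\Psi$. Take $\gamma$ with $\mu(\gamma)<_I0$ and $\Psi=a\,e^{\mu(\gamma)/z}\,\id$. Then $\Psi\in H^0(I,\scr{E}nd^{<0}(\scr{G}))_\varrho$ for every $a\in\C$. Yet $\Psi^n(g)(z)=a^ne^{n\mu(\gamma)/z}g(z)$ does not tend to $0$ at any $z$ with $g(z)\neq0$ and $|a|e^{\mathrm{Re}(\mu(\gamma)/z)}\geq1$, and for $|a|$ large such $z$ lie in $\widetilde{I}(\varrho,I)$. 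So the corollary holds only after a $\Psi$-dependent shrinking of $\widetilde{I}$, or for germs along $I$. That is your first option, and you should adopt it outright. With the conjugation above this is immediate. Choose $r'_J>0$ so small that $\|D\Psi D^{-1}(z)\|\leq 1/2$ whenever $\arg z\in J$ and $|z|<r'_J$, and set $\widetilde{I}'=\bigcup_J\{(z,w)\mid w\in J^\circ,\ |z|<r'_J\}$. Then $\|D\Psi^ng\|\leq2^{-n}\|Dg\|$ pointwise on $\widetilde{I}'$. This gives convergence in every seminorm of the Fréchet space over $\widetilde{I}'$, including the sup norms on compacta of $\widetilde{I}'\setminus S^1$. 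Your uniformity worry then disappears, since convergence in a Fréchet space is checked one seminorm at a time once $\widetilde{I}'$ is fixed.
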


\subsection{Proof of the surjectivity}
For an element $c'$ with 
$c'<_{I_0}c$,
we have $c'<_{I_1}c$
or $c'<_{I_2}c$. 
We can take a partition 
\begin{align*}
    T_c(I_0)\coloneqq \{c'\in C(\scr{L})\mid c'<_{I_0} c\}
    =T_1\sqcup T_2
\end{align*}
so that $T_1\subset T_c(I_1)$ and $T_2\subset T_c(I_2)$. 
Then we have the map 
\[s_i\colon H^0(I_0,
    (\scr{A}_\mu\otimes\gr \scr{L})_{<c})
    \to H^0(I_i,
    (\scr{A}_\mu\otimes\gr \scr{L})_{<c})\]
so that $v=s_1(v)_{|I_0}+s_2(v)_{|I_0}$
and non-zero coefficients 
in $s_i(v)$ are contained in $\gr_{c'}\scr{L}$
for some $c'\in T_i$.
Take any $g\in H^0(I_0,
    (\scr{A}_\mu\otimes\gr \scr{L})_{<c})$.
There exists 
$\varrho>0$
such that $g\in H^0(I_0, (\scr{A}_{\mu,\varrho}\otimes\gr \scr{L})_{<c})$
and $\varphi$ is defined over $\scr{A}_{\mu,\varrho}(I)$.
Consider the map $\Phi$ defined by 
 \[\Phi(g)=g-(\varphi(s_2(g)_{|I_0})+s_1(g)_{|I_0})
 =(\id-\varphi)(s_2(g)_{|I_0})\] 
Since $(\id-\varphi)\in H^0(I,\scr{E}nd^{<0}(\scr{A}_\mu\otimes\gr\scr{L}))_\varrho$,
by Corollary \ref{Appcor}, the sum
\[h=h_\Phi(g)\coloneqq \sum_{n=0}^\infty \Phi^n(g)\]
converges. 
Under the map \eqref{I0},
we have 
$(-s_1(h),s_2(h))\mapsto g$, 
which implies the 
claim. \qed

\end{appendix}

\bibliographystyle{alpha}
\bibliography{summable}
\end{document}